\newcommand{\bR}{\mathbb{R}}
\newcommand{\bN}{\mathbb{N}}
\newcommand{\cF}{\mathcal{F}}
\newcommand{\cA}{\mathcal{A}}
\newcommand{\cB}{\mathcal{B}}
\newcommand{\cO}{\mathcal{O}}
\newcommand{\cC}{\mathcal{C}}
\newcommand{\cV}{\mathcal{V}}
\newcommand{\hatC}[1]{\widehat{C}_{#1}}
\newcommand{\hatL}{\widehat{L}}
\newcommand{\hatI}{\widehat{I}}
\newcommand{\hatJ}{\widehat{J}}
\newcommand{\tildeJ}{\widetilde{J}}
\newcommand{\hatgamma}{\widehat{\gamma}}
\newcommand{\hatlambda}{\widehat{\lambda}}
\newcommand{\hatphi}{\widehat{\varPhi}}
\newcommand{\tildealpha}{\widetilde{\alpha}}
\newcommand{\tildebeta}{\widetilde{\beta}}
\newcommand{\tildegamma}{\widetilde{\gamma}}
\newcommand{\Prob}{\mathbb{P}}
\newcommand{\PMeas}{\mathbb{P}}
\newcommand{\QMeas}{\mathbb{Q}}
\newcommand{\E}{\mathbb{E}}
\newcommand{\hatY}{\widehat{Y}}
\newcommand{\hatSf}{\widehat{S}^{f}}
\newcommand{\hatSc}{\widehat{S}^{c}}
\newcommand{\barX}{\overline{X}}
\newcommand{\Var}{\mathbb{V}}
\newcommand{\ind}{\mathbbm{1}}
\newcommand{\comp}{\mathsf{c}}
\DeclareMathOperator{\di}{d \!}
\DeclareMathOperator{\e}{e}
\DeclareMathOperator{\Id}{I_{d}}
\DeclareMathOperator{\Cost}{Cost}
\DeclareMathOperator{\vol}{vol}
\DeclareMathOperator{\sech}{sech}
\newcommand{\abs}[1]{\left \vert #1 \right \vert}
\newcommand{\inner}[1]{\left \langle #1 \right \rangle}
\newcommand{\norm}[1]{\left \Vert #1 \right \Vert}
\newcommand{\floor}[1]{\left \lfloor #1 \right \rfloor}
\newcommand{\ceil}[1]{\left \lceil #1 \right \rceil}
\declaretheorem[name=Theorem]{thm}
\declaretheorem[name=Assumption, style=definition]{asm}
\declaretheorem[name=Proposition]{prop}
\declaretheorem[name=Lemma]{lemma}
\theoremstyle{remark}
\newtheorem*{rem}{Remark}
\title[Higher-order MLMC method for approximating invariant measures]{Higher-order spring-coupled multilevel Monte Carlo method for invariant measures}
\author{H{\aa}kon Hoel}
\address{Department of Mathematics, University of Oslo, Norway}
\email{haakonah@math.uio.no}
\author{Sankarasubramanian Ragunathan$^\star$}
\address{Chair of Mathematics for Uncertainty Quantification, RWTH Aachen University, Germany}
\email{ragunathan@uq.rwth-aachen.de}
\thanks{$^\star$Corresponding author: S. Ragunathan (ragunathan@uq.rwth-aachen.de)}
\keywords{change of measure, ergodic measure, SDE, order 1.5 strong Itô--Taylor method, Girsanov's theorem, MLMC method}
\subjclass[2020]{60H10, 60H35, 65C05, 37M25}
\begin{document}
    \begin{abstract}
        A higher-order change-of-measure multilevel Monte Carlo (MLMC) method is developed for computing weak approximations of the invariant measures of SDE with drift coefficients that do not satisfy the contractivity condition. This is achieved by introducing a spring term in the pairwise coupling of the MLMC trajectories employing the order 1.5 strong It\^o--Taylor method. Through this, we can recover the contractivity property of the drift coefficient while still retaining the telescoping sum property needed for implementing the MLMC method.

        We show that the variance of the change-of-measure MLMC method grows linearly in time $T$ for all $T > 0$, and for all sufficiently small timestep size $h > 0$. For a given error tolerance $\epsilon > 0$, we prove that the method achieves a mean-square-error accuracy of $\cO(\epsilon^2)$ with a computational cost of $\cO(\epsilon^{-2} \abs{\log \epsilon}^{3/2} (\log \abs{\log \epsilon})^{1/2})$ for uniformly Lipschitz continuous payoff functions and $\cO \big( \epsilon^{-2} \abs{\log \epsilon}^{5/3 + \xi} \big)$ for discontinuous payoffs, respectively, where $\xi > 0$. We also observe an improvement in the constant associated with the computational cost of the higher-order change-of-measure MLMC method, marking an improvement over the Milstein change-of-measure method in the aforementioned seminal work by M. Giles and W. Fang. Several numerical tests were performed to verify the theoretical results and assess the robustness of the method.
    \end{abstract}
    
    \maketitle
    
    \bibliographystyle{amsplain}
    \section{Introduction}
    In this paper, we construct a higher-order multilevel Monte Carlo (MLMC) method for weak approximations of the stationary distribution of $d$-dimensional stochastic differential equations of the form 
    \begin{equation}
        \begin{split}
            \di X(t) &= a(X(t)) \di t + \di W(t) \\
            X(0) &= x_{0} \in \bR^{d}.
        \end{split}
        \label{eq:SDE}
    \end{equation}
    Here, $a \colon \bR^{d} \to \bR^{d}$ denotes the drift coefficient, $W(t)$ is a $d$-dimensional standard Wiener process on a filtered, complete probability space $\big( \Omega, \cF, (\cF_{t})_{t \geq 0}, \Prob \big)$, and the deterministic initial condition $x_{0}$ is given. More precisely, we develop a higher-order change-of-measure numerical method for computing expected values of the form
    \begin{equation*}
        \E \big[ \varPhi(X_{\infty}) \big] \coloneqq \int_{\bR^{d}} \varPhi(x) \,  \pi( \di x) = \lim\limits_{t \to \infty} \E \big[ \varPhi (X_{t}) \big],
    \end{equation*}
    where $\varPhi \colon \bR^{d} \to \bR$ represents a quantity of interest (QoI) and $\pi$ denotes the invariant distribution of~\eqref{eq:SDE}. Further details on the regularity and contractivity of the drift coefficient, ensuring that the invariant distribution is well-defined, are provided in Section~\ref{sec:notation_theorem}.

    The main contribution of this work is to show that, under sufficient regularity conditions, the higher-order change-of-measure MLMC method weakly approximates $\E[\varPhi(X_{\infty})]$ with a mean-square-error accuracy of $\cO(\epsilon^2)$ at the computational cost of $\cO (\epsilon^{-2} \abs{\log \epsilon}^{3/2} (\log \abs{\log \epsilon})^{1/2})$, and $\cO \big( \epsilon^{-2} \abs{\log \epsilon}^{5/3 + \xi} \big)$ for all $\xi > 0$, for a payoff function $\varPhi$ that is uniformly Lipschitz continuous, and discontinuous, respectively. The higher-order change-of-measure scheme also has a better constant associated with the computational cost than the Milstein scheme. This is an improvement from $\cO (\epsilon^{-2} \abs{\log \epsilon}^{2})$ obtained in \cite[Theorem~5]{fang2019multilevel}.

    Computing weak approximations of the invariant measure have various applications in physics, biology, statistical mechanics and other fields alike. In molecular dynamics, ergodicity plays an important role in determining the properties of the system \cite{frenkel2001understanding}. In quantum mechanics, the ergodic measure provides a statistical description of the spacing between the energy levels of many-body quantum systems \cite{berry1977regular, leitner2015quantum}. In molecular biology, the ergodic measure relates to the motion of bio-molecules and dictates the essential functions of a cell \cite{li2022role}.

    The invariant measure can be computed either using Monte Carlo techniques, by the numerical simulation of SDE paths, or by solving the associated partial differential equation; the Fokker--Planck equation \cite[equation~(8.6)]{weinan2021applied}. In high-dimensional state space, computing the closed-form analytical solution to the Fokker--Planck equation is oftentimes not tractable, if not impossible, due to the curse of dimensionality. Since we are only interested in computing weak approximations of the invariant measure, we use Monte Carlo simulations because it is numerically tractable and scales well to higher dimensions.

    Note that the naive Monte Carlo method can be computationally expensive as one might need many Monte Carlo samples to achieve $\cO(\epsilon^{2})$ mean-squared error of the estimated QoI. In \cite{giles2008multilevel, giles2015multilevel}, the MLMC method is formulated to compute weak approximations of a QoI w.r.t the measure of an SDE. For the case of a uniformly Lipschitz continuous payoff function, and using the Euler--Maruyama scheme, it was shown that a mean-squared error of $\cO ( \epsilon^{-2} )$ can be achieved using a computational cost of $\cO ( \epsilon^{-2} \abs{\log \epsilon}^{2} )$, which is an improvement from $\cO ( \epsilon^{-3} )$ using the naive Monte Carlo method. The study was later extended to the adaptive Euler-Maruyama scheme with non-uniform and path-dependent time-step sizes in \cite{Raul2012, Szepessy2014}. The results from these works form the basis for the development of an MLMC method to compute weak approximations of invariant measures.
    
    Following the idea above, the MLMC method was extended to compute weak approximations of invariant measures in \cite{zygalakis2020} for SDE with drift coefficients that satisfy the contractivity condition, i.e. the drift coefficient is one-sided Lipschitz continuous with a negative one-sided Lipschitz constant. The authors show that under appropriate pairwise contracting coupling of the trajectories of the MLMC estimator, one can obtain a uniform-in-time estimate of the MLMC variance. The Milstein scheme is proposed in \cite{weng2019invariant} to compute weak approximations of invariant measures of SDE. Assuming that the diffusion coefficient satisfies the commutativity condition \cite[equation~(3.13)]{Kloeden1992} and the drift coefficient satisfies the contractivity condition, the authors prove a strong convergence rate of order 1. The authors also show that the probability density function associated with the numerical solution converges exponentially to the true invariant distribution.

    Higher-order numerical methods for approximating the ergodic measure of SDE is developed in \cite{abdulle2014high}. Sufficient conditions necessary to weakly approximate ergodic measure of SDE with a high order of accuracy, irrespective of the weak order of the numerical scheme, are developed. The convergence properties of a wide class of adaptive numerical approximations of SDE are studied in \cite{foster2023convergence}. The authors show that as long as the adaptive numerical method does not introduce any bias in the computation of the "Lévy area", the numerical approximation converges to the actual SDE dynamics. The seminal works provide a framework for computing weak approximations of invariant measures of SDE and to establish convergence results for methods employing adaptive numerical integrators. This is of interest to our current work and for future extensions.

    Explicit adaptive time-stepping schemes employing the Euler--Maruyama scheme were developed in \cite{Lord2018, lemaire2007} to compute the expected value of QoIs that depend on the solution of SDE with non-uniformly Lipschitz continuous drift coefficients. The fundamental idea behind adaptive time-stepping is to use a time-step size that depends on the drift term. Using a smaller time-step size as the magnitude of the drift term becomes large ensures that the numerical solution is stable and bounded. The idea was extended in \cite{wang2016_EM1, wang2017_EM2, wang2018} for the MLMC method. The authors prove theoretically that using adaptive time-stepping, under suitable pairwise contracting coupling of the MLMC trajectories, the $p$-th moment of the difference between the true SDE solution and the numerical solution is uniformly bounded in time $T$ for all $T > 0$, provided the drift coefficient satisfies the contractivity condition.

    In all the above works, it is assumed that the drift coefficient satisfies the contractivity condition. In practice, only a small class of SDE have drift coefficients that satisfy the contractivity condition. This impedes the effective implementation of the above methods for solving SDE that do not satisfy the assumption. A change of measure technique has hence been developed in \cite{fang2019multilevel} to extend the invariant measure computation to drift coefficients that satisfy the one-sided Lipschitz continuity condition. The contractivity property of the drift coefficient is recovered by constructing a numerical scheme with a "spring" term. The authors also show a linear growth in the variance of the change-of-measure MLMC estimator with respect to time $T$ for all $T > 0$. A similar approach is utilized in \cite{Ballesio2023} and \cite{giles2021} to develop coupled particle filters that work for It\^o diffusion that have unstable dynamics, and to compute pathwise sensitivities of SDE, respectively.

    The rest of the paper is organized as follows: Sections~\ref{sec:notation_theorem} and \ref{sec:theoretical_results} introduce the necessary notation, regularity assumptions, and the main theoretical results on convergence and computational cost associated with the higher-order change-of-measure MLMC scheme, respectively. In Section~\ref{sec:numerical_results}, we will look at how the higher-order change-of-measure algorithm performs by considering different numerical test cases. In Section~\ref{sec:conclusion}, we will discuss the conclusions that can be drawn from the current work and the possible future extensions. Finally, Section~\ref{sec:theoretical_proofs} contains the proofs for the theorems stated in Section~\ref{sec:theoretical_results}.
    
    \section{Notation and the higher-order change-of-measure method}
    \label{sec:notation_theorem}
    In this section, we state the necessary notation, and assumptions on the SDE drift coefficient and its derivatives to ensure the existence and uniqueness of a solution to the SDE~\eqref{eq:SDE}, and to ensure that it has an invariant measure. We also state the higher-order change-of-measure scheme and the corresponding theoretical results for the convergence and the computational cost.

    \subsection{Notation and assumptions}
    Let $\inner{v, w}$ denote the inner product between two vectors $v, w \in \bR^{d}$ defined as
    \[ \inner{v, w} \coloneqq v_{1} w_{1} + v_{2} w_{2} + \cdots + v_{d} w_{d}, \]
    where $v_{i}$ represents the $i$-th component of the vector $v \in \bR^{d}$ for all $i = 1, \ldots, d$. The standard Euclidean norm is induced from the vector inner product and is defined as
    \[ \norm{v} \coloneqq \sqrt{\sum_{i = 1}^{d} v_{i}^{2}}, \quad \forall v \in \bR^{d}. \]
    The definition of the standard Euclidean norm can be extended similarly to a $k$-th order tensor $A \in (\bR^{d})^{\otimes k}$ \[ \norm{A} \coloneqq \sqrt{\sum_{i_{1}, i_{2}, \cdots, i_{k} = 1}^{d} \abs{A_{i_{1} i_{2} \cdots i_{k}}}^{2}}, \qquad \forall k \in \bN \setminus \{ 1 \}, \] where $\otimes$ denotes the tensor outer product \cite{qi2017tensor}.

    For any integer $k\geq 1$, let $C^{k}_b(\bR^d)$ denote the set of scalar-valued functions with continuous and uniformly bounded partial derivatives up to and including order $k$. We make the following assumption on the regularity of the drift coefficient of SDE~\eqref{eq:SDE},
    \begin{asm}[Regularity of $a(x)$]
        \label{asmp:lipschitz_a}
        The components of the drift coefficient $a_{i} \in C_{b}^{3} \big( \bR^{d} \big)$, for all $i = 1, \ldots, d$. We also assume that there exists a constant $K_{b} > 0$ such that
        \begin{equation*}
            \sup_{x \in \bR^{d}} \max \Bigg( \abs{\frac{\partial a_{i}}{\partial x_{j}} (x)}, \abs{\frac{\partial^{2} a_{i}}{\partial x_{j} \partial x_{k}} (x)}, \abs{\frac{\partial^{3} a_{i}}{\partial x_{j} \partial x_{k} \partial x_{l}} (x)} \Bigg) \eqqcolon K_{b} < \infty, \quad \forall i, j, k = 1, \ldots, d.
        \end{equation*}
    \end{asm}
    From Assumption~\ref{asmp:lipschitz_a}, using the Mean value theorem,
    the following implications hold for all $i = 1, \ldots, d$
    \begin{equation*}
        \begin{split}
            \norm{a(x) - a(y)} &= \sqrt{\sum_{i = 1}^{d} \abs{a_{i}(x) - a_{i}(y)}^{2}} \leq \sqrt{d} K_{b} \norm{x - y}, \\
            ~
            \norm{Da(x) - Da(y)} &\leq d K_{b} \norm{x - y}, \\
            ~
            \norm{D^{2}a(x) - D^{2}a(y)} &\leq d^{3/2} K_{b} \norm{x - y},
        \end{split}
    \end{equation*}
    where
    \begin{align*}
        Da(x) &\coloneqq \frac{\partial a_{i}}{\partial x_{j}}(x) \; e_{i} \otimes e_{j}, & Da[u, v] &= \sum_{i,j = 1}^{d}\frac{\partial a_{i}}{\partial x_{j}} u_{i} v_{j}, \\
        D^{2}a(x) &\coloneqq \frac{\partial^{2} a_{i}}{\partial x_{j} \partial x_{k}}(x) \; e_{i} \otimes e_{j} \otimes e_{k}, & D^{2}a[u, v, w] &= \sum_{i,j,k = 1}^{d} \frac{\partial^{2} a_{i}}{\partial x_{j} \partial x_{k}} u_{i} v_{j} w_{k}, \\
        D^{3}a(x) &\coloneqq \frac{\partial^{3} a_{i}}{\partial x_{j} \partial x_{k} \partial x_{l}}(x) \; e_{i} \otimes e_{j} \otimes e_{k} \otimes e_{l}, & D^{3}a[u, v, w, z] &= \sum_{i,j,k,l = 1}^{d} \frac{\partial^{3} a_{i}}{\partial x_{j} \partial x_{k} \partial x_{l}} u_{i} v_{j} w_{k} z_{l}
    \end{align*}
    for all $u, v, w, z \in \bR^{d}$, where $e_{k}$ denotes the standard d-dimensional basis vector, i.e. a column vector with 1 in the $k$-th entry and 0 everywhere else, and $\otimes$ denotes the tensor product. To keep things concise, we also introduce the short-hand notation $Da(x) u \coloneqq Da(x)[\cdot, u]$ and $(Da(x))^{\top} u \coloneqq Da(x)[u, \cdot]$ for all $u \in \bR^{d}$. Similarly, we also introduce the notation $D^{2}a(x) u v \coloneqq D^{2}a(x) [., u, v]$ and $D^{2}a(x) u^{2} \coloneqq D^{2}a(x) [., u, u]$ for all $u, v \in \bR^{d}$.
    
    Assumption~\ref{asmp:lipschitz_a} ensures that there exists a unique solution to the SDE~\eqref{eq:SDE}, \cite[Theorem 5.2.1]{oksendal2013stochastic}. Assumption~\ref{asmp:lipschitz_a} also implies that there exists a constant $\lambda > 0$ such that the following inequality holds
    \begin{equation*}
        \begin{aligned}[c]
            \inner{x - y, a(x) - a(y)} &\leq \lambda \norm{x - y}^{2}, \quad \forall x, y \in \bR^{d},
        \end{aligned}
    \end{equation*}
    where $\lambda$ denotes the one-sided Lipschitz constant.
    \begin{asm}[uniform Lipschitz continuity of $D a(x) a(x)$]
        \label{asmp:aD_lipschitz}
        There exists a constants $L > 0$, such that
        \[ \abs{ \Big( a_{i} \frac{\partial a_{k}}{\partial x_{i}} \Big) (x) - \Big( a_{i} \frac{\partial a_{k}}{\partial x_{i}} \Big) (y) } \leq L \norm{x - y}, \qquad \forall x, y \in \bR^{d} \] for all $i, k = 1, \ldots, d$.
    \end{asm}
    Let $\cA$ denote the infinitesimal generator of the It\^o process~\eqref{eq:SDE} defined as
    \begin{align}
        \label{eq:A_operator}
        \cA \coloneqq \sum_{i = 1}^{d} a_{i}(x) \frac{\partial}{\partial x_{i}} + \frac{1}{2} \sum_{i = 1}^{d} \frac{\partial^{2}}{\partial x_{i}^{2}} = a(x) \cdot D + \frac{1}{2} \Delta,
    \end{align}
    where $\Delta$ represents the Laplacian operator. The Laplacian applied to the vector-valued drift coefficient $a$ is defined as the following component-wise operation
    \[ \displaystyle \Delta a(x) \coloneqq \Bigg( \sum_{i = 1}^{d} \frac{\partial^{2} a_{1}}{\partial x_{i}^{2}}(x), \cdots, \sum_{i = 1}^{d} \frac{\partial^{2} a_{d}}{\partial x_{i}^{2}}(x) \Bigg)^{\top}. \]
    The infinitesimal generator is needed to construct the order 1.5 strong It\^o--Taylor scheme and occurs naturally due to the It\^o--Taylor expansion.
    \begin{prop}[uniformly Lipschitz continuity of $\cA$]
        Let Assumptions~\ref{asmp:lipschitz_a} and \ref{asmp:aD_lipschitz} hold. Then, there exists a constant $\hatL > 0$ such that
        \[ \norm{\cA a(x) - \cA a(y)} \leq \hatL \norm{x - y}, \quad \forall x, y \in \bR^{d}. \]
    \end{prop}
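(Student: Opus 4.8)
\emph{Proof proposal.} The plan is to split $\cA a$ into its first- and second-order parts as dictated by the definition~\eqref{eq:A_operator} and to bound the Lipschitz seminorm of each part separately. Acting componentwise, $\cA$ sends $a$ to
\[
  \cA a(x) = Da(x)\,a(x) + \tfrac{1}{2}\Delta a(x),
  \qquad \big(Da(x)\,a(x)\big)_{k} = \sum_{i=1}^{d} a_{i}(x)\,\frac{\partial a_{k}}{\partial x_{i}}(x),
\]
so it suffices to establish uniform Lipschitz bounds for $x\mapsto Da(x)a(x)$ and for $x\mapsto \Delta a(x)$, and then to take $\hatL$ as (a dimensional multiple of) the sum of the two resulting constants.

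For the first term I would read off from the display above that its $k$-th component is precisely $\sum_{i=1}^{d}(a_i\,\partial a_k/\partial x_i)(x)$, a sum of $d$ functions each of which is $L$-Lipschitz by Assumption~\ref{asmp:aD_lipschitz}. The triangle inequality then gives $\abs{(Da(x)a(x))_k-(Da(y)a(y))_k}\le dL\norm{x-y}$ for every $k$, and summing the squares over $k$ yields $\norm{Da(x)a(x)-Da(y)a(y)}\le d^{3/2}L\norm{x-y}$.

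For the second term I would use that $\Delta a(x)$ is a partial trace of the third-order tensor $D^{2}a(x)$, namely $(\Delta a(x))_k=\sum_{i=1}^{d}(D^{2}a(x))_{kii}$. A Cauchy--Schwarz step in the $i$-sum gives $\abs{(\Delta a(x))_k-(\Delta a(y))_k}\le\sqrt{d}\,\norm{D^{2}a(x)-D^{2}a(y)}$ for each $k$, hence $\norm{\Delta a(x)-\Delta a(y)}\le d\,\norm{D^{2}a(x)-D^{2}a(y)}$; combining this with the estimate $\norm{D^{2}a(x)-D^{2}a(y)}\le d^{3/2}K_b\norm{x-y}$ already recorded after Assumption~\ref{asmp:lipschitz_a} (itself a mean-value-theorem consequence of $a_i\in C_b^3$) closes this piece. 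Adding the two bounds gives the claim with, for instance, $\hatL\coloneqq d^{3/2}L+\tfrac12 d^{5/2}K_b$.

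No single step is a genuine obstacle here; the only points needing care are the bookkeeping of the dimension-dependent constants and, conceptually, the recognition that Assumption~\ref{asmp:lipschitz_a} by itself is not enough — it makes $Da$ Lipschitz and $a$ of linear growth, but the \emph{product} $Da\,a$ can fail to be Lipschitz when $a$ is merely one-sided Lipschitz, which is exactly the gap that Assumption~\ref{asmp:aD_lipschitz} is introduced to fill.
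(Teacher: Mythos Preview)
Your proposal is correct and follows essentially the same route as the paper: split $\cA a$ into $Da\,a$ and $\tfrac12\Delta a$, invoke Assumption~\ref{asmp:aD_lipschitz} componentwise for the first term, and use the third-derivative bound from Assumption~\ref{asmp:lipschitz_a} for the second. The only cosmetic difference is that the paper bounds each $\partial^{2}a_k/\partial x_i^{2}$ directly rather than passing through the full tensor norm of $D^{2}a$, which yields the slightly sharper constant $\hatL=d^{3/2}(L+\tfrac12K_b)$ in place of your $d^{3/2}L+\tfrac12 d^{5/2}K_b$.
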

    \begin{proof}
        Consider the operator $\cA$ applied to the $k$-th component of the drift coefficient $a$. We then obtain
        \begin{equation*}
            \begin{split}
                \abs{\cA a_{k}(x) - \cA a_{k}(y)} &= \abs{\sum_{i = 1}^{d} \bigg( a_{i}(x) \frac{\partial a_{k}(x)}{\partial x_{i}} - a_{i}(y) \frac{\partial a_{k}(y)}{\partial x_{i}} \bigg) + \frac{1}{2} \sum_{i = 1}^{d} \bigg( \frac{\partial^{2} a_{k}(x)}{\partial x_{i}^{2}} - \frac{\partial^{2} a_{k}(y)}{\partial x_{i}^{2}} \bigg)} \\
                &\leq \sum_{i = 1}^{d} \abs{ a_{i}(x) \frac{\partial a_{k}(x)}{\partial x_{i}} - a_{i}(y) \frac{\partial a_{k}(y)}{\partial x_{i}} } + \frac{1}{2} \sum_{i = 1}^{d} \abs{ \frac{\partial^{2} a_{k}(x)}{\partial x_{i}^{2}} - \frac{\partial^{2} a_{k}(y)}{\partial x_{i}^{2}} } \\
                &\leq \sum_{i = 1}^{d} L \norm{x - y} + \frac{1}{2} \sum_{i = 1}^{d} K_{b} \norm{x - y} \\
                &= \hatL_{k} \norm{x - y},
            \end{split}
        \end{equation*}
        where $\hatL_{k} \coloneqq d \Big( L + \frac{1}{2} K_{b} \Big)$. We then have
        \begin{equation*}
            \begin{split}
                \norm{\cA a(x) - \cA a(y)} = \sqrt{\sum_{k = 1}^{d} \abs{\cA a_{k}(x) - \cA a_{k}(y)}^{2} } \leq \underbrace{d^{3/2} \Big( L + \frac{1}{2} K_{b} \Big)}_{\eqqcolon \hatL} \norm{x - y}
            \end{split}
        \end{equation*}
    \end{proof}
    The one-sided Lipschitz property of the operator $\cA$ acting on the drift coefficient $a$ follows from the uniform Lipschitz continuity of $\cA$, i.e. there exists a constant $\hatlambda > 0$ such that
    \begin{equation*}
        \inner{x - y, \cA a(x) - \cA a(y)} \leq \hatlambda \norm{x - y}^{2}.
    \end{equation*}
    
    The regularity assumptions on the drift coefficient and its derivatives ensure that the order 1.5 strong It\^o--Taylor scheme is well defined and is required for the implementation of the higher-order change-of-measure scheme \cite[Theorem 10.6.3]{Kloeden1992}.
    
    Note that the regularity assumption on the drift coefficient alone does not guarantee the existence of an invariant measure. To this end, we assume that the drift coefficient also satisfies the dissipativity condition given below
    \begin{asm}[Dissipativity condition]
        \label{asmp:dissipativity_a}
        The drift coefficient satisfies the dissipativity condition, i.e. there exists constants $\alpha, \beta > 0$ such that
        \begin{equation*}
             \inner{x, a(x)} \leq - \alpha \norm{x}^{2} + \beta, \qquad \forall x \in \bR^{d}
        \end{equation*}
    \end{asm}
    Assumptions \ref{asmp:lipschitz_a} and \ref{asmp:dissipativity_a} are sufficient to guarantee that the SDE \eqref{eq:SDE} has an invariant measure $\pi(x)$ and that the solution to the SDE converges exponentially to $\pi(x)$ \cite{mattingly2002ergodicity, meyn1992stability, meyn1993stability_2, meyn1993stability_3}, i.e.
    \begin{equation}
        \label{eq:ergodic_geomtric}
        \abs{\E [ \varPhi(X_{T}) ] - \E [ \varPhi(X_{\infty}) ]} \leq \mu^{\star} \e^{- \lambda^{\star} T},
    \end{equation}
    for some constants $\mu^{\star}, \lambda^{\star} > 0$. More details regarding the sufficient conditions for an It\^o process to be ergodic can be found in \cite{khasminski2018, Kloeden1992}.

    \subsection{The higher-order change-of-measure method}
    Having made the necessary assumptions needed to ensure the well-posedness of a solution to the SDE~\eqref{eq:SDE}, and the existence of an invariant measure $\pi(x)$, the expected value of a QoI under the $\QMeas$ measure, given by $\E^{\QMeas} [\varPhi(X_{T})]$, can be approximated using the standard MLMC method as
    \begin{equation*}
        \E^{\QMeas} [ \varPhi(\barX_{T}^{L}) ] = \E^{\QMeas} [ \varPhi(\barX_{T}^{0}) ] + \sum_{\ell = 1}^{L} \E^{\QMeas} [ \varPhi(\barX_{T}^{f, \ell})  - \varPhi(\barX_{T}^{c, \ell - 1}) ]
    \end{equation*}
    where $L$ denotes the total number of levels in the MLMC estimator, $T > 0$ represents the terminal time, and $\barX^{f}$ and $\barX^{c}$ denote the fine and the coarse trajectories, respectively. Both the fine and the coarse trajectories are numerically simulated using the order 1.5 strong It\^o--Taylor scheme given by
    \\[0.3cm]
    {\scshape Fine trajectory, $\QMeas$-measure:}
    \begin{align}
        \label{eq:fine_odd_wo_spring}
        \begin{split}
            \barX^{f}(t_{2n + 1}) &= \barX^{f}(t_{2n}) + h a(\barX^{f}(t_{2n})) + \Delta W_{2n}^{\QMeas} + Da(\barX^{f}(t_{2n})) \Delta Z_{2n}^{\QMeas} + \frac{h^{2}}{2} \cA a (\barX^{f}(t_{2n})),
        \end{split}
        \\
        \label{eq:fine_even_wo_spring}
        \begin{split}
            \barX^{f}(t_{2n + 2}) &= \barX^{f}(t_{2n + 1}) + h a(\barX^{f}(t_{2n + 1})) + \Delta W_{2n + 1}^{\QMeas} + Da(\barX^{f}(t_{2n + 1})) \Delta Z_{2n + 1}^{\QMeas} \\
            &\quad + \frac{h^{2}}{2} \cA a (\barX^{f}(t_{2n + 1})),
        \end{split}
    \end{align}
    {\scshape Coarse trajectory, $\QMeas$-measure:}
    \begin{align}
        \label{eq:coarse_even_wo_spring}
        \begin{split}
            \barX^{c}(t_{2n + 2}) &= \barX^{c}(t_{2n}) + 2h a(\barX^{c}(t_{2n})) + \Big( \Delta W_{2n + 1}^{\QMeas} + \Delta W_{2n}^{\QMeas} \Big) \\
            &\quad + Da(\barX^{c}(t_{2n})) \Big( \Delta Z_{2n + 1}^{\QMeas} + \Delta Z_{2n}^{\QMeas} + h \Delta W_{2n}^{\QMeas} \Big) + 2 h^{2} \cA a (\barX^{c}(t_{2n})),
        \end{split}
    \end{align}
    for $n = 0, 1, \ldots, N/2 - 1$ where $t_{n} = n h$, $N = T / h \in \bN$, $h = 2^{-\ell} h_{0}$ denotes the uniform time-step size at level $\ell$ of the multilevel estimator, $h_{0}$ denotes the time-step size at level $\ell = 0$, $\barX^{f}(0) = \barX^{c}(0) = x_{0}$, and $\Delta Z_{n}$ is a random variable defined as
    \[ \Delta Z_{n}^{k} \coloneqq \int_{t_{n}}^{t_{n + 1}} \int_{t_{n}}^{s} \di W_{s}^{k} \di s, \qquad \forall k = 1, \ldots, d.  \]
    The tuple of correlated random variables $(\Delta W_{n}^{\QMeas}, \Delta Z_{n}^{\QMeas})$, under the $\QMeas$ measure, can be generated by
    \begin{equation*}
        \Delta W_{n}^{\QMeas} \coloneqq \Delta U_{1, n}, \qquad \Delta Z_{n}^{\QMeas} \coloneqq \frac{1}{2} h \Big( \Delta U_{1, n} + \frac{1}{\sqrt{3}} \Delta U_{2, n} \Big),
    \end{equation*}
    where $\Delta U_{1, n}, \Delta U_{2, n}$ are independent identically distributed (i.i.d.) $\mathcal{N}^{\QMeas}(0, h \Id)$ samples, and $\Id$ denotes the $d \times d$ identity matrix. Note that
    \begin{equation*}
        \E \big[ \inner{\Delta W_{i}^{\QMeas}, \Delta W_{j}^{\QMeas}} \big] = d h \delta_{ij}, \qquad \E \big[ \inner{\Delta Z_{i}^{\QMeas}, \Delta Z_{j}^{\QMeas}} \big] = \frac{d h^{3}}{3} \delta_{ij}, \qquad \E \big[ \inner{\Delta W_{i}^{\QMeas}, \Delta Z_{j}^{\QMeas}} \big] =  \frac{dh^{2}}{2} \delta_{ij},
    \end{equation*}
    where \[ \delta_{ij} = \begin{cases}
        1, & i = j \\
        0, & i \neq j
    \end{cases} \]
    represents the Kronecker delta function.
    
    We cannot use the standard MLMC algorithm for computing the weak approximations of the ergodic measure as the fine and the coarse trajectories of the MLMC estimator may drift away from each other. This can lead to an exponential growth in the multilevel variance w.r.t. the terminal time $T$, which can lead to an increase in the computational cost needed to achieve good variance estimates \cite{wang2018, fang2019multilevel}. To circumvent this issue, we introduce a spring term in both the fine and the coarse trajectories.

    Let $S > 0$ denote the spring constant. The order 1.5 strong It\^o--Taylor scheme, after the introduction of the spring term, for both the fine and the coarse trajectories can be written as
    \\[0.3cm]
    {\scshape Fine trajectory, $\PMeas$-measure:}
    \begin{align}
        \label{eq:fine_odd}
        \begin{split}
            \hatY^{f}(t_{2n + 1}) &= \hatY^{f}(t_{2n}) + Sh \big( \hatY^{c}(t_{2n}) - \hatY^{f}(t_{2n}) \big) + h a(\hatY^{f} (t_{2n})) + \Delta W_{2n}^{\PMeas}, \\
            &\quad + Da(\hatY^{f} (t_{2n})) \Delta Z_{2n}^{\PMeas} + \frac{h^{2}}{2} \cA a(\hatY^{f}(t_{2n})),
        \end{split} \\
        \label{eq:fine_even}
        \begin{split}
            \hatY^{f} (t_{2n + 2}) &= \hatY^{f} (t_{2n + 1}) + Sh \big( \hatY^{c} (t_{2n + 1}) - \hatY^{f} (t_{2n + 1}) \big) + h a(\hatY^{f} (t_{2n + 1})) + \Delta W_{2n + 1}^{\PMeas}, \\
            &\qquad + Da(\hatY^{f} (t_{2n + 1})) \Delta Z_{2n + 1}^{\PMeas} + \frac{h^{2}}{2} \cA a(\hatY^{f} (t_{2n + 1})),
        \end{split}
    \end{align}
    {\scshape Coarse trajectory, $\PMeas$-measure:}
    \begin{align}
        \label{eq:coarse_odd}
        \begin{split}
            \hatY^{c} (t_{2n + 1}) &= \hatY^{c} (t_{2n}) + Sh \big( \hatY^{f}(t_{2n}) - \hatY^{c}(t_{2n}) \big) + h a(\hatY^{c} (t_{2n})) + \Delta W_{2n}^{\PMeas} \\
            &\quad + Da(\hatY^{c}(t_{2n})) \Delta Z_{2n}^{\PMeas} + \frac{h^{2}}{2} \cA a(\hatY^{c}(t_{2n})),
        \end{split} \\
        \label{eq:coarse_even}
        \begin{split}
            \hatY^{c} (t_{2n + 2}) &= \hatY^{c} (t_{2n}) + 2Sh \big( \hatY^{f} (t_{2n}) - \hatY^{f} (t_{2n}) \big) + 2h a(\hatY^{c} (t_{2n})) + (\Delta W_{2n}^{\PMeas} + \Delta W_{2n + 1}^{\PMeas}) \\
            &\qquad + Da(\hatY^{c} (t_{2n})) (\Delta Z_{2n + 1}^{\PMeas} + \Delta Z_{2n}^{\PMeas} + h \Delta W_{2n}^{\PMeas}) + 2 h^{2} \cA a(\hatY^{c} (t_{2n})),
        \end{split}
    \end{align}
    for $n = 0, 1, \ldots, N/2 - 1$. The tuple of random variables $(\Delta W_{n}^{\PMeas}, \Delta Z_{n}^{\PMeas})$ under the $\PMeas$ measure can be generated as
    \begin{equation}
        \label{eq:random_increments}
        \Delta W_{n}^{\PMeas} \coloneqq \Delta V_{1, n}, \qquad \Delta Z_{n}^{\PMeas} \coloneqq \frac{1}{2} h \Big( \Delta V_{1, n} + \frac{1}{\sqrt{3}} \Delta V_{2, n} \Big),
    \end{equation}
    where $\Delta V_{1, n}, \Delta V_{2, n} \sim \mathcal{N}^{\PMeas} (0, h \Id)$ are i.i.d. samples. Note that both the fine and the coarse trajectories have the same initial value, i.e. $\hatY^{f} (0) = \hatY^{c} (0) = x_{0}$.

    Note that the order 1.5 strong It\^o--Taylor scheme with the spring term included, defined by the equations~\eqref{eq:fine_odd}-\eqref{eq:coarse_even}, does not solve the original SDE~\eqref{eq:SDE}. Using Girsanov's theorem, we can compute weak approximations of the invariant measure of the original SDE with numerical schemes that include the "spring" term. Girsanov's theorem states that there indeed exists a transformation from the $\PMeas$ measure, associated with the numerical process with the "spring" term, to the $\QMeas$ measure, associated with the original SDE~\eqref{eq:SDE} \cite[Theorem 9.2]{weinan2021applied}.
    
    Consider the fine trajectory of the numerical scheme with and without the "spring" term, sampled from the measures $\PMeas$ and $\QMeas^{f}$, respectively, where $\QMeas^{f}$ denotes the measure associated with the fine trajectory of the numerical scheme without the "spring" term. Under the transformation 
    \begin{equation}
        \label{eq:transform_fine}
        \begin{split}
            \Delta U_{1, n}^{\QMeas^{f}} &= \hatSf_{n} h + \Delta V_{1, n}^{\PMeas} \\
            \Delta U_{2, n}^{\QMeas^{f}} &= -\sqrt{3} \hatSf_{n} h + \Delta V_{2, n}^{\PMeas},
        \end{split}
    \end{equation}
    we have that \[ \hatY_{n}^{f} (\omega) = \barX_{n}^{f}(\omega), \quad \forall \omega \in \Omega \] for all $n = 0, 1, \ldots, N$, where
    \begin{equation}
        \label{eq:hatSf}
        \begin{split}
            \hatSf_{n} \coloneqq S \Big( \hatY_{n}^{c} - \hatY_{n}^{f} \Big).
        \end{split}
    \end{equation}
    The Radon--Nikodym derivative for the fine trajectory, according to Girsanov's theorem, is given by
    \begin{equation}
        \label{eq:RfT_derivative}
        R^{f}_{T} = \frac{\di \QMeas^{f}}{\di \PMeas} = \prod_{n = 0}^{N - 1} \exp \Bigg( - \inner{\hatSf_{n}, \Delta V_{1, n}} + \sqrt{3} \inner{\hatSf_{n}, \Delta V_{2, n}} - 2 h \norm{\hatSf_{n}}^{2} \Bigg).
    \end{equation}
    Similarly, consider the coarse trajectory of the numerical scheme with and without the "spring" term, sampled from the measures $\PMeas$ and $\QMeas^{c}$, respectively, where $\QMeas^{c}$ denotes the measure associated with the coarse trajectory of the numerical scheme without the "spring" term. Under the transformation
    \begin{equation}
        \label{eq:transform_coarse}
        \begin{split}
            \Delta U_{1, 2n}^{\QMeas^{c}} &= \hatSc_{2n} h + \Delta V_{1, 2n}^{\PMeas} \\
            \Delta U_{2, 2n}^{\QMeas^{c}} &= -2 \sqrt{3} \hatSc_{2n} h + \Delta V_{2, 2n}^{\PMeas},
        \end{split}
    \end{equation}
    we obtain \[ \hatY_{2n}^{c} (\omega) = \barX_{2n}^{c} (\omega), \quad \forall \omega \in \Omega \] for all $n = 0, \ldots, N/2$, where
    \begin{equation}
        \label{eq:hatSc}
        \begin{split}
            \hatSc_{2n} \coloneqq S \Big( \hatY_{2n}^{f} - \hatY_{2n}^{c} \Big).
        \end{split}
    \end{equation}
    The Radon--Nikodym derivative for the coarse trajectory is then given by
    \begin{equation}
        \label{eq:RcT_derivative}
        \begin{split}
            R^{c}_{T} = \frac{\di \QMeas^{c}}{\di \PMeas} &= \prod_{n = 0}^{N/2 - 1} \exp \Bigg( - \inner{\hatSc_{2n}, \Delta V_{1, 2n} + \Delta V_{1, 2n + 1}} + 2 \sqrt{3} \inner{\hatSc_{2n}, \Delta V_{2, 2n} + \Delta V_{2, 2n + 1}} \\
            &\qquad - 13 h \norm{\hatSc_{2n}}^{2} \Bigg).
        \end{split}
    \end{equation}
    
    Using the Radon--Nikodym derivatives given by the equations~\eqref{eq:RfT_derivative} and \eqref{eq:RcT_derivative}, the MLMC estimator under the change-of-measure can be written as
    \begin{equation}
        \label{eqn:mlmc_spring}
        \E^{\QMeas} \Big[ \varPhi(\barX_{T}^{L}) \Big] = \E^{\QMeas} \Big[ \varPhi(\barX_{T}^{0}) \Big] + \sum_{\ell = 1}^{L} \E^{\Prob} \Big[ \varPhi(\hatY_{T}^{f, \ell}) R_{T}^{f, \ell} - \varPhi(\hatY_{T}^{c, \ell}) R_{T}^{c, \ell} \Big].
    \end{equation}
    Theorems~\ref{thm:convergence_estimator} and \ref{thm:convergence_estimator_discont} show that $\varPhi$ is $L^{p}(\Omega)$-stable under the $\PMeas$ measure. We also make the assumption that $\varPhi$ is atleast $L_{1}(\Omega)$-stable under the $\QMeas$ measure, i.e. \[ \E^{\QMeas} \Big[ \abs{\varPhi(\barX_{T})} \Big] < \infty, \quad \forall T > 0. \]
    From the above assumption, and the results from Theorems~\ref{thm:convergence_estimator} and \ref{thm:convergence_estimator_discont}, we have that equation~\ref{eqn:mlmc_spring} is well-defined for all $T > 0$. 
    
    Having defined the higher-order change-of-measure method, we will discuss the corresponding main theoretical results in Section~\ref{sec:theoretical_results}.

    \section{Theoretical results}
    \label{sec:theoretical_results}
    In this section, we state the key theoretical results that establish the stability of the fine and the coarse trajectories of the MLMC estimator. We also prove $L^{p}(\Omega)$-norm convergence of the difference between the pairwise coupled numerical paths, the boundedness of the Radon--Nikodym derivative, and the resulting computational complexity of the higher-order change-of-measure MLMC method.

    \begin{thm}[Stability of numerical solution]
        \label{thm:stability}
            \allowdisplaybreaks
            Consider the SDE~\eqref{eq:SDE} and let Assumptions~\ref{asmp:lipschitz_a}, \ref{asmp:aD_lipschitz} and \ref{asmp:dissipativity_a} hold. Then for any spring constant $S > 0$, there exist constants $\kappa_{1}, \kappa_{2} > 0$, independent of $T$ and $p$, such that for any $T > 0$, $p \geq 1$, and for all $0 < h < \kappa_{1}$, it holds that
            \begin{align*}
                \sup_{0 \leq n \leq N} \E \Big[ \norm{\hatY_{t_{n}}^{f}}^{p}  \Big]^{1 / p} \leq \kappa_{2} p^{1/2}, \qquad \sup_{0 \leq n \leq N} \E \Big[ \norm{\hatY_{t_{n}}^{c}}^{p}  \Big]^{1 / p} \leq \kappa_{2} p^{1/2}.
            \end{align*}
    \end{thm}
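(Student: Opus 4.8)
The plan is to establish a uniform-in-time Foster--Lyapunov drift inequality for the coupled scheme and then bootstrap it to a uniform exponential-moment bound, from which the $p^{1/2}$ scaling follows by a standard sub-Gaussian moment comparison. The first reduction is to note that the odd-index values $\hatY^{f}(t_{2n+1})$, $\hatY^{c}(t_{2n+1})$ are only intermediate quantities, so that $Z_{n} \coloneqq \big(\hatY^{f}(t_{2n}), \hatY^{c}(t_{2n})\big) \in \bR^{2d}$ is a Markov chain: $\hatY^{f}(t_{2n+2})$ and $\hatY^{c}(t_{2n+2})$ are measurable functions of $Z_{n}$ and the noise increments on $[t_{2n}, t_{2n+2}]$. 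It therefore suffices to bound $\E\big[\norm{\hatY^{f}(t_{2n})}^{p} + \norm{\hatY^{c}(t_{2n})}^{p}\big]$ uniformly in $n$; a single further substep estimate then transfers the bound to odd indices, without spoiling the $p^{1/2}$ dependence, since one substep only adds a conditionally Gaussian increment of conditional variance $\cO(h)$.

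The heart of the proof is the one-step estimate
\[
\E\big[\, \norm{\hatY^{f}(t_{2n+2})}^{2} + \norm{\hatY^{c}(t_{2n+2})}^{2} \,\big|\, \cF_{t_{2n}} \,\big] \le (1 - \alpha h)\big(\norm{\hatY^{f}(t_{2n})}^{2} + \norm{\hatY^{c}(t_{2n})}^{2}\big) + C h ,
\]
valid for all $0 < h < \kappa_{1}$, with $\kappa_{1}, C > 0$ depending only on $\alpha, \beta, S, d, K_{b}, L, \hatL$ (hence on neither $T$ nor $p$). To derive it I would expand the squared norms at $t_{2n+2}$, Taylor-propagate the mid-step evaluations of $a$, $Da$, $\cA a$ back to $\cF_{t_{2n}}$-measurable quantities, and take conditional expectations. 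Four mechanisms combine: (i) every stochastic increment ($\Delta W^{\PMeas}$, $\Delta Z^{\PMeas}$, $Da(\cdot)\,\Delta Z^{\PMeas}$) has conditional mean zero, so it enters only through its conditional second moment, which is $\cO(h)$, and the mutual correlations between the fine and coarse increments are irrelevant; (ii) the dissipativity condition (Assumption~\ref{asmp:dissipativity_a}), applied at each drift evaluation point, contributes $-2\alpha h\norm{\cdot}^{2} + \cO(h)$ up to $\cO(h^{2})(1 + \norm{\hatY^{f}}^{2} + \norm{\hatY^{c}}^{2})$ corrections; (iii) the spring contributions combine, to leading order, to $4Sh\inner{\hatY^{f}(t_{2n}), \hatY^{c}(t_{2n}) - \hatY^{f}(t_{2n})} + 4Sh\inner{\hatY^{c}(t_{2n}), \hatY^{f}(t_{2n}) - \hatY^{c}(t_{2n})} = -4Sh\norm{\hatY^{f}(t_{2n}) - \hatY^{c}(t_{2n})}^{2} \le 0$ (the two fine substeps each contributing $2Sh\inner{\hatY^{f}, \hatY^{c} - \hatY^{f}}$ after the propagation, the coarse double step contributing $4Sh\inner{\hatY^{c}, \hatY^{f} - \hatY^{c}}$), so the spring coupling is harmless for the sum of squares even without contractivity of $a$; (iv) the remaining terms — the squares of the deterministic increments — are $\cO(h^{2})(1 + \norm{\hatY^{f}}^{2} + \norm{\hatY^{c}}^{2})$ by the linear growth of $a$ and $\cA a$ (Assumptions~\ref{asmp:lipschitz_a}--\ref{asmp:aD_lipschitz} and the preceding Proposition) and the boundedness of $Da$. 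Collecting yields $\E[V_{n+1}\mid\cF_{t_{2n}}] \le (1 - 2\alpha h + C_{2}h^{2})V_{n} + C_{3}h$ with $V_{n} = \norm{\hatY^{f}(t_{2n})}^{2} + \norm{\hatY^{c}(t_{2n})}^{2}$; choosing $\kappa_{1} \le \alpha/C_{2}$ produces the factor $(1 - \alpha h)$.

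Iterating the drift inequality gives $\sup_{n} \E[V_{n}] \le 2\norm{x_{0}}^{2} + C/\alpha$, which is the $p = 2$ case. For the sharp $p^{1/2}$ bound I would upgrade to a uniform exponential-moment estimate: since $V_{n+1} - \E[V_{n+1}\mid\cF_{t_{2n}}]$ has a conditionally Gaussian leading part (contributing $\cO(\theta^{2}h)\norm{\cdot}^{2}$ to the log moment generating function, absorbed once $\theta$ is small) and a quadratic-in-noise remainder that is $\cO(h)$ times a $\chi^{2}$ (hence with finite moment generating function for $\theta$ small), one gets $\E[e^{\theta V_{n+1}}\mid\cF_{t_{2n}}] \le \exp\big(\theta(1 - \tfrac{\alpha}{2}h)V_{n} + C' h\big)$; Jensen's inequality then yields $\log \E[e^{\theta V_{n+1}}] \le (1 - \tfrac{\alpha}{2}h)\log\E[e^{\theta V_{n}}] + C'h$, which iterates to $\sup_{n}\E[e^{\theta V_{n}}] =: M < \infty$. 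From the elementary bound $x^{p/2} \le (p/(2\theta e))^{p/2} e^{\theta x}$ we conclude $\E[\norm{\hatY^{f}(t_{2n})}^{p}]^{1/p} \le \E[V_{n}^{p/2}]^{1/p} \le (p/(2\theta e))^{1/2} M^{1/p} \le \kappa_{2}\, p^{1/2}$ with $\kappa_{2} \coloneqq (2\theta e)^{-1/2}\max(1, M)$, uniformly in $n$, $T$ and $p$; the odd-index and the coarse bounds follow identically.

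The delicate step is the exponential-moment bootstrap: one must check that the drift rate stays bounded away from zero uniformly in $h$ for $h$ small, and that the martingale increment of $V_{n}$ is sufficiently sub-Gaussian/sub-exponential conditionally, so that the exponential-moment recursion closes \emph{uniformly in time} rather than merely over a fixed horizon. A secondary nuisance is the bookkeeping in the one-step inequality — transporting the mid-step evaluations back to $\cF_{t_{2n}}$ with only $\cO(h^{2})$ losses and confirming that the $2{+}2$-versus-$4$ spring cancellation in (iii) genuinely survives it. As an alternative to the exponential-moment route, one may run a direct induction over even integer moments $\E[V_{n}^{m}]$, tracking the $\cO((Cm)^{m})$ growth of the constants and interpolating for general $p \ge 1$; the exponential route is cleaner and delivers the $p^{1/2}$ scaling directly.
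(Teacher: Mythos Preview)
Your approach is correct and genuinely different from the paper's. The paper does not use a Foster--Lyapunov/exponential-moment route; instead it works pathwise: it derives a recursion of the form
\[
e^{2\gamma t_{2n+2}}\Big(\norm{\hatY^{f}_{2n+2}}^{2}+\norm{\hatY^{c}_{2n+2}}^{2}\Big)
\le e^{2\gamma t_{2n}}\Big(\norm{\hatY^{f}_{2n}}^{2}+\norm{\hatY^{c}_{2n}}^{2}\Big)+\text{(increments)},
\]
sums it over $n$, raises to the power $p/2$, takes the supremum inside and the expectation, and then bounds the resulting six terms $J_{1},\dots,J_{6}$ with Jensen/H\"older and the Burkholder--Davis--Gundy inequality to get
$\E\big[\sup_{n}e^{\gamma p t_{n}}(\norm{\hatY^{f}_{t_{n}}}^{2}+\norm{\hatY^{c}_{t_{n}}}^{2})^{p/2}\big]\le \kappa_{2}^{p}p^{p/2}e^{\gamma p T}$. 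The $p^{p/2}$ comes from the Gaussian moment bounds on $\norm{\Delta W}^{p}$, $\norm{\Delta Z}^{p}$ and the BDG constant $(C_{\mathrm{BDG}}p)^{p/4}$, not from an explicit sub-Gaussian comparison.

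Your route trades the BDG machinery for a one-step conditional MGF recursion, which is arguably cleaner and delivers a \emph{uniform exponential moment} $\sup_{n}\E[e^{\theta V_{n}}]<\infty$ as a direct by-product; this is in fact something the paper needs later (in the proof of Theorem~\ref{thm:radon_nikodym} it reconstructs an exponential moment of $\norm{\hatY^{f}}^{2}$ from the polynomial bounds of Theorem~\ref{thm:stability} via the Taylor series and Stirling), so your argument would shorten that step. Conversely, the paper's BDG approach yields the stronger supremum-inside-expectation bound $\E\big[\sup_{n}\norm{\hatY^{f}_{t_{n}}}^{p}\big]$, which your method does not immediately give; however, that extra strength is not used in the statement of Theorem~\ref{thm:stability} nor, as far as I can see, elsewhere in the paper. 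The point you single out as ``delicate'' --- closing the MGF recursion uniformly in $h$ --- is fine: the conditionally Gaussian martingale part contributes $\theta^{2}Ch\,V_{n}$ to the log-MGF and the $\chi^{2}$-type remainder contributes $\cO(\theta h)$, so fixing $\theta$ small relative to $\alpha/C$ absorbs the former into the $(1-\alpha h)$ drift rate; the mid-step propagation in (iii) is exactly the bookkeeping the paper also performs when substituting \eqref{eq:fine_odd_bound}--\eqref{eq:coarse_odd_bound} into \eqref{eq:fine_even_bound_1}.
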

    We defer the proof to Section~\ref{sec:theoretical_proofs}. Theorem~\ref{thm:stability} states that the trajectories of the higher-order change-of-measure MLMC method are $L^{p}(\Omega)$ stable for all sufficiently small $h > 0$, and that the moments are uniformly bounded in $T$. Note that the stability of the trajectories is not affected by the spring term. A similar theoretical result is obtained for the numerical scheme without the spring term
    \begin{thm}[Stability of numerical solution without the spring term]
        \label{thm:stability_wo_spring}
        Consider the SDE~\eqref{eq:SDE} and let Assumptions~\ref{asmp:lipschitz_a}, \ref{asmp:aD_lipschitz}, and \ref{asmp:dissipativity_a} hold. Then there exist constants $C_{1}, C_{2} > 0$, independent of $T$ and $p$, such that for any $T > 0$, $p \geq 1$, and for all $0 < h < C_{1}$, it holds that
        \[ \sup_{0 \leq n \leq N} \E \Big[ \norm{\barX_{t_{n}}^{f}}^{p} \Big]^{1/p} \leq C_{2} p^{1/2}, \qquad \sup_{0 \leq n \leq N/2} \E \Big[ \norm{\barX_{t_{2n}}^{c}}^{p} \Big]^{1/p} \leq C_{2} p^{1/2}. \]
    \end{thm}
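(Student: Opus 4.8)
The plan is to prove a uniform-in-time exponential-moment bound for $\norm{\barX^{f}(t_n)}^2$ and $\norm{\barX^{c}(t_{2n})}^2$, and then to deduce the claimed $p^{1/2}$ growth of the $L^{p}(\Omega)$-norm by combining a Chernoff bound with Stirling's formula. The argument is a simplified version of the proof of Theorem~\ref{thm:stability}: since the scheme~\eqref{eq:fine_odd_wo_spring}--\eqref{eq:coarse_even_wo_spring} contains no spring coupling between the fine and coarse trajectories, the two estimates decouple and are both obtained from the same one-step recursion. I describe it for the fine trajectory; write $\barX_n \coloneqq \barX^{f}(t_n)$ for brevity.

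Expand the scheme over one step as $\barX_{n+1} = \barX_n + h\,a(\barX_n) + \Delta W_n^{\QMeas} + R_n$ with the order-$1.5$ remainder $R_n \coloneqq Da(\barX_n)\Delta Z_n^{\QMeas} + \tfrac{h^2}{2}\cA a(\barX_n)$, and square:
\begin{equation*}
    \norm{\barX_{n+1}}^2 = \norm{\barX_n}^2 + 2h\inner{\barX_n, a(\barX_n)} + 2\inner{\barX_n, \Delta W_n^{\QMeas} + R_n} + \norm{h\,a(\barX_n) + \Delta W_n^{\QMeas} + R_n}^2 .
\end{equation*}
By Assumption~\ref{asmp:dissipativity_a}, $2h\inner{\barX_n, a(\barX_n)} \le -2\alpha h\norm{\barX_n}^2 + 2\beta h$. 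Assumption~\ref{asmp:lipschitz_a} gives the linear growth $\norm{a(x)} \le \norm{a(0)} + \sqrt{d}\,K_b\norm{x}$ and the uniform bound $\norm{Da(x)} \le d K_b$, and the uniform Lipschitz continuity of $\cA$ gives $\norm{\cA a(x)} \le \norm{\cA a(0)} + \hatL\norm{x}$; combined with the scalings $\E\big[\norm{\Delta W_n^{\QMeas}}^{2k}\big] = \cO(h^k)$ and $\E\big[\norm{\Delta Z_n^{\QMeas}}^{2k}\big] = \cO(h^{3k})$, every $\cF_{t_n}$-measurable summand produced by expanding the last term — and every cross term of $\barX_n$ with $\tfrac{h^2}{2}\cA a(\barX_n)$ or with $h\,a(\barX_n)$ — is bounded by $\cO\big(h^2(1 + \norm{\barX_n}^2)\big)$, while the remaining pieces are noise of all orders that is independent of $\norm{\barX_n}$.

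Now fix $\theta \in (0, \alpha/2)$ and take the conditional expectation of $\exp(\theta\norm{\barX_{n+1}}^2)$ given $\cF_{t_n}$. The jointly Gaussian term $2\inner{\barX_n, \Delta W_n^{\QMeas}} + 2\inner{\barX_n, Da(\barX_n)\Delta Z_n^{\QMeas}}$ has conditional variance $4h\norm{\barX_n}^2\big(1 + \cO(h)\big)$, so by the Gaussian moment-generating function its contribution is $\exp\!\big(2\theta^2 h\norm{\barX_n}^2(1+\cO(h))\big)$, which for $h$ below a threshold $C_1$ depending only on $\alpha, K_b, \hatL$ is dominated by the dissipativity gain $\exp\!\big(-2\alpha\theta h\norm{\barX_n}^2\big)$ since $\theta < \alpha/2$; the term $\norm{\Delta W_n^{\QMeas}}^2$ and all $R_n$-induced contributions have finite exponential moments (for $\theta h < 1/2$) giving only a factor $\e^{C\theta h}$, and the $\cO\big(h^2(1+\norm{\barX_n}^2)\big)$ pieces are absorbed into $\tfrac{c}{2}\theta h\norm{\barX_n}^2 + C\theta h$. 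Collecting terms, there exist $c, C > 0$ independent of $T, p, h$ with $\E\big[\e^{\theta\norm{\barX_{n+1}}^2} \,\big|\, \cF_{t_n}\big] \le \exp\!\big(\theta(1 - c h)\norm{\barX_n}^2 + C h\big)$. Writing $m_n \coloneqq \log\E\big[\e^{\theta\norm{\barX_n}^2}\big] \ge 0$ and applying Jensen's inequality to the concave map $x \mapsto x^{1-ch}$ yields $m_{n+1} \le C h + (1-ch)m_n$, hence $m_n \le m_0 + C/c$ uniformly in $n$, i.e. $M \coloneqq \sup_n \E\big[\e^{\theta\norm{\barX_n}^2}\big] < \infty$. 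Finally, for any $p \ge 1$,
\begin{equation*}
    \E\big[\norm{\barX_n}^p\big] = \E\big[\big(\norm{\barX_n}^2\big)^{p/2}\big] \le \frac{\Gamma(p/2 + 1)}{\theta^{p/2}}\,\E\big[\e^{\theta\norm{\barX_n}^2}\big] \le \frac{M\,\Gamma(p/2 + 1)}{\theta^{p/2}},
\end{equation*}
and Stirling's formula gives $\Gamma(p/2+1)^{1/p} \le C' p^{1/2}$ for all $p \ge 1$, so $\sup_{0 \le n \le N} \E\big[\norm{\barX^{f}(t_n)}^p\big]^{1/p} \le C_2 p^{1/2}$ with $C_2$ independent of $T$ and $p$. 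The coarse trajectory~\eqref{eq:coarse_even_wo_spring} is handled identically over steps of length $2h$: Assumption~\ref{asmp:dissipativity_a} then produces a $-4\alpha h\norm{\barX^{c}(t_{2n})}^2$ gain, which still dominates the corresponding Gaussian and higher-order corrections once $h$ is small.

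The main obstacle is the bookkeeping for the order-$1.5$ It\^o--Taylor corrections: one must verify that $Da(\barX_n)\Delta Z_n^{\QMeas}$, $\tfrac{h^2}{2}\cA a(\barX_n)$, and all cross terms they generate with $\barX_n$ and with $\Delta W_n^{\QMeas}$ are either independent of $\norm{\barX_n}$ with bounded exponential moments, or of order $\cO\big(h^2(1+\norm{\barX_n}^2)\big)$ — hence negligible against the $\cO\big(h\norm{\barX_n}^2\big)$ dissipativity gain — \emph{uniformly} in $h$, so that the admissible range of $\theta$, and therefore the constant $C_2$ in the $p^{1/2}$ bound, depends only on $\alpha$ and the structural constants $K_b, L, \hatL$ and not on $T$ or $h$. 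This is precisely the step where Assumptions~\ref{asmp:lipschitz_a} and \ref{asmp:aD_lipschitz} (through the linear growth of $\cA a$) enter.
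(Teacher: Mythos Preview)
Your approach is correct but genuinely different from the paper's. The paper mirrors its proof of Theorem~\ref{thm:stability}: it expands $\norm{\barX_{n+1}^{f}}^{2}$, isolates the dissipative drift term and the martingale increments $\inner{\barX_n^{f},\Delta W_n}$ and $\inner{\barX_n^{f},Da(\barX_n^{f})\Delta Z_n}$, multiplies by $\e^{2\gamma t_{n+1}}$, telescopes, raises to the power $p/2$, and then bounds the resulting stochastic sums via the Burkholder--Davis--Gundy inequality together with a Young-inequality absorption trick; the $p^{p/2}$ factor emerges from the BDG constant and from the raw Gaussian moments $\E[\norm{\Delta W_k}^{p}/h^{p/2}]\le d^{p/2}p^{p/2}$. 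You instead establish a uniform exponential moment $\sup_n\E\big[\e^{\theta\norm{\barX_n}^{2}}\big]<\infty$ by a one-step Lyapunov contraction (Gaussian moment-generating function for the linear part, chi-squared MGF for the quadratic part, dissipativity beating the $2\theta^{2}h\norm{\barX_n}^{2}$ feedback when $\theta<\alpha/2$), and then read off the $L^{p}$ bound from $x^{p/2}\le\theta^{-p/2}\Gamma(p/2+1)\e^{\theta x}$ and Stirling. Your route is more elementary in that it avoids BDG entirely and yields the stronger intermediate statement that $\norm{\barX_n}$ is sub-Gaussian uniformly in $n$---a fact the paper only recovers later, in the proof of Theorem~\ref{thm:radon_nikodym}, by summing the $L^{p}$ bounds against a Taylor expansion of the exponential. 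The paper's route, on the other hand, is structurally parallel to its treatment of the spring-coupled scheme in Theorem~\ref{thm:stability}, where the fine and coarse trajectories are genuinely entangled and a direct exponential-moment recursion would be harder to close. One point to make fully rigorous in your write-up: the linear and quadratic Gaussian contributions are correlated, so rather than multiplying their MGFs you should either invoke the exact MGF of a Gaussian quadratic form, $\det(I-2M)^{-1/2}\exp\big(\tfrac12 b^{\top}(I-2M)^{-1}b\big)$, or apply H\"older's inequality to separate them at the cost of a factor of $2$ in the variance---which is exactly why your threshold $\theta<\alpha/2$ (rather than $\theta<\alpha$) is the right one.
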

    Using the results from Theorem~\ref{thm:stability}, we obtain the $L_{p}(\Omega)$-norm bounds for the pairwise coupled trajectories difference.
    \begin{thm}[$L^{p}(\Omega)$-norm pairwise coupled trajectories difference]
        \label{thm:convergence}
        \begingroup
            \allowdisplaybreaks
            Consider the SDE~\eqref{eq:SDE} and let Assumptions~\ref{asmp:lipschitz_a}, \ref{asmp:aD_lipschitz} and \ref{asmp:dissipativity_a} hold. Then for any $S > \lambda / 2$, there exist constants $\kappa_{3}, \kappa_{4}, \kappa_{5} > 0$, independent of $T$ and $p$, such that for any $T > 0$, $p \geq 1$, and for all $0 < h < \min(\kappa_{3}, \kappa_{4} / \sqrt[3]{p})$, it holds that
            \[ \sup_{0 \leq n \leq N} \Bigg( \E \Bigg[ \norm{\hatY_{t_{n}}^{f} - \hatY_{t_{n}}^{c}}^{p} \Bigg] \Bigg)^{1/p} \leq \kappa_{5} \min \Big( p^{1/2} h^{1/2}, \; p h, \; p^{3/2} h^{3/2} \Big). \]
        \endgroup
    \end{thm}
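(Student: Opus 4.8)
\emph{Approach.} I would study the pairwise difference $D_n := \hatY^f_{t_n} - \hatY^c_{t_n}$ (so $D_0 = 0$), establish a strictly contractive one-double-step recursion for $\E[\norm{D_{2n}}^p]$, and then sum a geometric series; since $D_0 = 0$ and the recursion contracts, the resulting bound is automatically uniform in $n$, hence in $T$. Composing the two fine sub-steps \eqref{eq:fine_odd}--\eqref{eq:fine_even} and subtracting the single coarse step \eqref{eq:coarse_even} (whose spring contribution is $2Sh\,D_{2n}$), the common Wiener increments $\Delta W_{2n}^{\PMeas}$, $\Delta W_{2n+1}^{\PMeas}$ cancel and the net spring contribution over the double step is $-4Sh\,D_{2n} + \cO(h^2)$. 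Writing $\delta a_{2n} := a(\hatY^f_{t_{2n}}) - a(\hatY^c_{t_{2n}})$, and similarly $\delta Da_{2n}$, $\delta\cA a_{2n}$, the odd-step identity is
\begin{equation*}
  D_{2n+1} = (1 - 2Sh)\,D_{2n} + h\,\delta a_{2n} + \delta Da_{2n}\,\Delta Z_{2n}^{\PMeas} + \tfrac{h^2}{2}\,\delta\cA a_{2n},
\end{equation*}
which I would use to eliminate $D_{2n+1}$ from the double-step recursion, then group the expression for $D_{2n+2}$ into: (i) an affine, $\cF_{t_{2n}}$-measurable part built from $D_{2n}$, the spring term $-4Sh\,D_{2n}$, and $\cO(h)$ multiples of $\delta a_{2n}$; (ii) a conditionally mean-zero part consisting of Lipschitz coefficient-differences times the increments $\Delta W_{2n}^{\PMeas}$, $\Delta Z_{2n}^{\PMeas}$, $\Delta Z_{2n+1}^{\PMeas}$, hence of size $\cO(\norm{D_{2n}}\cdot\xi)$; and (iii) a genuine discretisation-mismatch remainder $\mathcal R_n$ that does \emph{not} vanish when $\hatY^f_{t_{2n}} = \hatY^c_{t_{2n}}$ and is the sole source driving $D$ away from $0$.

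\emph{The contraction and the remainder.} Squaring and conditioning on $\cF_{t_{2n}}$: part (i) produces the multiplicative factor $(1-2Sh)^2 + 2\lambda h + \cO(h^2) = 1 - 2(2S-\lambda)h + \cO(h^2)$ after the one-sided Lipschitz bound $\inner{x-y, a(x)-a(y)} \le \lambda\norm{x-y}^2$ — strictly below $1$ for all small $h$ \emph{exactly because} $S > \lambda/2$, which is where the hypothesis enters; the cross terms between $D_{2n}$ and part (ii) vanish under $\E[\cdot\mid\cF_{t_{2n}}]$; and $\E[\norm{\text{(ii)}}^2\mid\cF_{t_{2n}}] = \cO(h^2\norm{D_{2n}}^2)$ after inserting $\E[\norm{\Delta W^{\PMeas}}^2] = dh$, $\E[\norm{\Delta Z^{\PMeas}}^2] = dh^3/3$, and is thus absorbed into the contraction factor. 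The delicate point is the \emph{consistency} of $\mathcal R_n$: a naive comparison of two order-$1.5$ sub-steps against one order-$1.5$ double step leaves an $\cO(h^2)$ mismatch at the level of the $\cA a$ coefficient, but Taylor-expanding $h\,a(\hatY^f_{t_{2n+1}})$ about $\hatY^f_{t_{2n}}$ generates a term $\tfrac{h}{2}D^2a(\hatY^f_{t_{2n}})[\Delta W_{2n}^{\PMeas}]^{\otimes 2}$ with conditional mean $\tfrac{h^2}{2}\Delta a(\hatY^f_{t_{2n}})$ together with a cross term $h^2 Da(\hatY^f_{t_{2n}})a(\hatY^f_{t_{2n}})$; combined with the $h\,Da(\hatY^c_{t_{2n}})\Delta W_{2n}^{\PMeas}$ carried inside the coarse scheme's $\Delta Z$-bundle, all $\cO(h^2)$-and-coarser bias pieces cancel, leaving $\E[\mathcal R_n\mid\cF_{t_{2n}}] = \cO(h^{5/2}(1+\norm{\hatY^c_{t_{2n}}}))$ and $\E[\norm{\mathcal R_n}^2\mid\cF_{t_{2n}}]^{1/2} = \cO(h^2(1+\norm{\hatY^c_{t_{2n}}}))$, the latter genuinely mean-zero (built from third-order iterated stochastic integrals over the two sub-intervals versus the whole interval); Theorem~\ref{thm:stability} bounds the $\norm{\hatY^c_{t_{2n}}}$-factors uniformly in $n$.

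\emph{Recursion, geometric sum, general $p$.} Combining the above yields $\E[\norm{D_{2n+2}}^2] \le (1 - ch)\,\E[\norm{D_{2n}}^2] + Ch^4$ for some $c > 0$ and all $h < \kappa_3$ (Young's inequality absorbing the $\inner{D_{2n}, \E[\mathcal R_n\mid\cF_{t_{2n}}]}$ term); here the genuine mean-zero-ness of $\mathcal R_n$ matters, since it is the $\cO(h^4)$ second moment — not an $\cO(h^2)$ first moment — that enters. Iterating from $D_0 = 0$ gives $\sup_n\E[\norm{D_{2n}}^2] \le Ch^3$, the $p=2$ instance of the $p^{3/2}h^{3/2}$ entry. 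For even integer $p$ I would run the same scheme on $\norm{D_{2n}}^p$ — binomial expansion, conditioning, term-by-term estimates via Hölder, the Gaussian moment bounds $\E[\norm{\Delta W^{\PMeas}}^{2k}]^{1/(2k)} \le C k^{1/2}h^{1/2}$ and $\E[\norm{\Delta Z^{\PMeas}}^{2k}]^{1/(2k)} \le C k^{1/2}h^{3/2}$, the Lipschitz bounds on the coefficient-differences, and Theorem~\ref{thm:stability} for the $\mathcal R_n$-factors — obtaining, with $d_n := \E[\norm{D_{2n}}^p]^{1/p}$, a recursion of the schematic form $d_{n+1} \le (1 - cph + C(p^{3/2}h^2 + p^3h^3 + \cdots))\,d_n + Cp^{3/2}h^{5/2}$, in which the $p$-amplified higher-order corrections stay dominated by $cph$ precisely under the restriction $h < \kappa_4 p^{-1/3}$; summing the geometric series then gives the $p^{3/2}h^{3/2}$ bound. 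The three entries of the minimum arise from running this recursion with successively coarser estimates of $\mathcal R_n$ and of the $D_{2n}$-linear noise terms, each sharp in a different $(p,h)$ regime (the entry $p^{3/2}h^{3/2}$ reflecting the order-$1.5$ strong accuracy of the coupling, the others taking over when $ph \ge 1$); one takes the minimum. Finally $\E[\norm{D_{2n+1}}^p]^{1/p} \le C\,\E[\norm{D_{2n}}^p]^{1/p}$ by the odd-step identity above, so the supremum over all $0 \le n \le N$ has the same order as the supremum over even $n$.

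\emph{Main obstacle.} The crux is twofold. First, the consistency bookkeeping: one must verify that \emph{every} $\cO(h^2)$-and-coarser piece of the "two fine sub-steps versus one coarse step'' mismatch cancels — this uses the specific order-$1.5$ structure of both schemes (notably the $h\Delta W_{2n}^{\PMeas}$ correction inside the coarse $\Delta Z$-bundle) and the identities $\E[\Delta W^{i,\PMeas}\Delta W^{j,\PMeas}] = h\delta_{ij}$ — because any surviving $\cO(h^2)$ bias would accumulate to $\cO(h)$ and destroy the $h^{3/2}$ rate. Second, tracking the precise $p$-dependence of the many cross terms after the binomial expansion so that the threshold $h < \kappa_4 p^{-1/3}$ and the prefactor $p^{3/2}$ emerge exactly as stated; here the interplay of the order-$k^{1/2}$ Gaussian/BDG constants per increment with the $\cO(h)$ contraction is what forces the $p^{-1/3}$ scaling. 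The remaining steps (Young, Hölder, and invoking Theorem~\ref{thm:stability}) are routine but lengthy.
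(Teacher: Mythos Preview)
Your consistency analysis for $p=2$ is sound: after the second-order Taylor expansion of $h\,a(\hatY^f_{t_{2n+1}})$ about $\hatY^f_{t_{2n}}$, the terms $h\,Da(\hatY^f)\Delta W_{2n}$, $h^2 Da\cdot a$, and $\tfrac{h^2}{2}\Delta a$ do cancel against $-h\,Da(\hatY^c)\Delta W_{2n}$ and $-h^2\cA a(\hatY^c)$ (up to $\cO(\|D_{2n}\|)$ residuals that belong in your part~(ii)), leaving a remainder with $\E[\|\mathcal R_n\|^2]=\cO(h^4)$. The resulting one-step recursion on $\E[\|D_{2n}\|^2]$ then gives the $h^{3/2}$ rate directly for $p=2$. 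This is a legitimate shortcut to what the paper does in three passes.

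The gap is in your treatment of general $p$. The paper does \emph{not} run three independent analyses ``each sharp in a different $(p,h)$ regime''; the three bounds are sequential bootstrap steps (the Remark after Theorem~\ref{thm:convergence} says this explicitly). The mechanism is: square the difference, multiply by $e^{2\gamma t_{2n}}$, telescope, raise the resulting sum to power $p/2$, and apply Burkholder--Davis--Gundy to the martingale terms. The BDG step on terms of the form $\sum_k e^{2\gamma t_k}\langle D_{2k},(\delta Da_{2k})\Delta W_{2k}\rangle$ produces quadratic variations containing $\|D_{2k}\|^4$, whose $L^{p/4}$ norm requires an a~priori bound on $\E[\|D_{2k}\|^{p}]$ --- precisely what is being proved. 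The paper resolves the circularity by feeding in the previous pass's uniform-in-$T$ bound; your ``binomial expansion'' recursion on $d_n=\E[\|D_{2n}\|^p]^{1/p}$ does not close for the same reason, and the schematic form you wrote is inconsistent (a $(1-cph)$ factor on $d_n$ would give $\sup d_n\le Cp^{1/2}h^{3/2}$, not $p^{3/2}h^{3/2}$, and the contraction on the $L^p$ norm is $(1-ch)^{1/2}\approx 1-ch/2$, not $1-cph$). Finally, the restriction $h<\kappa_4 p^{-1/3}$ does not come from balancing higher-order corrections against the contraction; in the paper it arises only at the very end, from the odd-step bound where the factor $\E[\|\Delta Z_{2n}\|^p]\asymp (ph^3)^{p/2}$ must be $\cO(1)$.
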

    We defer the proof to Section~\ref{sec:theoretical_proofs}. Theorem~\ref{thm:convergence} states that the $L^{p}(\Omega)$-norm of the pairwise coupled difference of the MLMC trajectories converges uniformly in time $T$ for all sufficiently small $h > 0$. By choosing $S > \lambda / 2$, we can achieve uniform-in-time bounds obtained in Theorem~\ref{thm:convergence}. \\
    \begin{rem}
        Note that for all $p \geq 1$ and $h \in (0,1)$, the term $ph \geq \min (p^{1/2} h^{1/2}, p^{3/2} h^{3/2})$ and hence can be removed from Theorem~\ref{thm:convergence}. We have included the intermediate term as we use it recursively to obtain the higher-order convergence rate of $\cO (p^{3/2} h^{3/2})$.
    \end{rem}
    Making use of Theorems~\ref{thm:stability} and \ref{thm:convergence}, we can bound the $L^{p}(\Omega)$-norm of the Radon--Nikodym derivatives.
    \begin{thm}[$L^{p}(\Omega)$-norm Radon--Nikodym derivative]
     \label{thm:radon_nikodym}
        Consider the SDE~\eqref{eq:SDE} and let Assumptions~\ref{asmp:lipschitz_a}, \ref{asmp:aD_lipschitz} and \ref{asmp:dissipativity_a} hold. Then for all $S > \lambda / 2$, $T \geq 1$, $p \geq 1$, and for all $0 < h \leq h_{\max}(T)$, it holds that
        \[ \E \Big[ \abs{R^{f}_{T}}^{p} \Big] \leq \eta_{1}, \qquad \E \Big[ \abs{R^{c}_{T}}^{p} \Big] \leq \eta_{2}, \]
        where $h_{\max}(T)$ denotes the maximum permissible time-step size as a function of the terminal time $T$ given by \[ h_{\max}(T) \coloneqq \min \big( 1, \; C_{0} / \sqrt{T \log T} \big), \] 
        $C_{0}, \eta_{1}, \eta_{2} > 0$ are constants that are independent of $T$ and $p$.
    \end{thm}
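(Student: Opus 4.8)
The plan is to work entirely under the measure $\PMeas$, where Theorems~\ref{thm:stability} and~\ref{thm:convergence} supply the necessary moment control, and to exploit the product structure of the Radon--Nikodym derivatives~\eqref{eq:RfT_derivative} and~\eqref{eq:RcT_derivative}. I will treat $R^{f}_{T}$ in detail; the argument for $R^{c}_{T}$ is identical up to bookkeeping of the numerical constants. Let $\mathcal{G}_{n} \coloneqq \sigma\big( \Delta V_{1,k}, \Delta V_{2,k} : 0 \leq k \leq n-1 \big)$, so that $\hatSf_{n} = S(\hatY^{c}_{t_{n}} - \hatY^{f}_{t_{n}})$ from~\eqref{eq:hatSf} is $\mathcal{G}_{n}$-measurable while $(\Delta V_{1,n}, \Delta V_{2,n})$ is independent of $\mathcal{G}_{n}$ and distributed as in~\eqref{eq:random_increments}. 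Raising~\eqref{eq:RfT_derivative} to the power $p$ and conditioning on $\mathcal{G}_{N-1}$, the innermost factor is conditionally log-normal: given $\mathcal{G}_{N-1}$, the variable $-\inner{\hatSf_{N-1}, \Delta V_{1,N-1}} + \sqrt{3}\inner{\hatSf_{N-1}, \Delta V_{2,N-1}}$ is centered Gaussian with variance $4h\norm{\hatSf_{N-1}}^{2}$, which gives
\[ \E\Big[ \abs{R^{f}_{T}}^{p} \;\Big|\; \mathcal{G}_{N-1} \Big] = \abs{R^{f}_{t_{N-1}}}^{p}\, \exp\!\big( 2p(p-1) h \norm{\hatSf_{N-1}}^{2} \big), \]
where $R^{f}_{t_{n}}$ denotes the truncated product over indices $0, \ldots, n-1$. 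The analogous computation for $R^{c}_{T}$, using $\Delta V_{i,2n} + \Delta V_{i,2n+1} \sim \mathcal{N}^{\PMeas}(0, 2h\Id)$, produces the correction factor $\exp\big(13 p(p-1) h \norm{\hatSc_{2n}}^{2}\big)$ at step $n$. Iterating this identity down to $n = 0$ and controlling the accumulated product of correction factors is the entire content of the proof.

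Concretely, I would establish $\E\big[\abs{R^{f}_{t_{N}}}^{p}\big] \leq \eta_{1}$ by induction on $N$. Splitting the expectation produced by the conditioning identity,
\[ \E\big[\abs{R^{f}_{t_{N}}}^{p}\big] = \E\big[\abs{R^{f}_{t_{N-1}}}^{p}\big] + \E\Big[ \abs{R^{f}_{t_{N-1}}}^{p}\big( \e^{\,2p(p-1)h\norm{\hatSf_{N-1}}^{2}} - 1 \big) \Big], \]
I would bound the second term by Hölder's inequality, using $\e^{x} - 1 \leq x\,\e^{x}$ to extract the small prefactor $2p(p-1)h\norm{\hatSf_{N-1}}^{2}$. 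Here Theorem~\ref{thm:convergence} is decisive: $\norm{\hatSf_{N-1}}^{2} = S^{2}\norm{\hatY^{f}_{t_{N-1}} - \hatY^{c}_{t_{N-1}}}^{2}$ has, for every fixed $q$, its $q$-th moment of order $h^{3q}$, so $\E\big[\norm{\hatSf_{N-1}}^{2r}\big]^{1/r} = \cO(h^{3})$ for any fixed $r$, whereas Theorem~\ref{thm:stability} supplies the sub-Gaussian tail $\E\big[\norm{\hatSf_{n}}^{q}\big]^{1/q} \leq c\,q^{1/2}$ that keeps the exponential moments appearing in the Hölder estimate finite. This yields a recursion of the shape $\E\big[\abs{R^{f}_{t_{N}}}^{p}\big] \leq \E\big[\abs{R^{f}_{t_{N-1}}}^{p}\big] + C(p)\,h^{4}$ — the implicit $L^{q}(\Omega)$ control of $\abs{R^{f}_{t_{N-1}}}$ being exactly what the inductive hypothesis furnishes — which telescopes over the $N = T/h$ steps to $\E\big[\abs{R^{f}_{t_{N}}}^{p}\big] \leq 1 + C(p)\,T h^{3}$. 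Since $T h^{3} = h\cdot h^{2}T$ is uniformly bounded (and tends to $0$ as $T \to \infty$) once $h \leq h_{\max}(T) = \min\big(1,\, C_{0}/\sqrt{T\log T}\big)$, because then $h^{2}T \leq C_{0}^{2}/\log T$, the accumulated correction stays bounded by an absolute constant, giving the stated estimate with constants independent of $T$; the $\sqrt{\log T}$ is built into $h_{\max}(T)$ precisely so that the correction terms sum to an absolute constant rather than a $T$-dependent one. The bound for $R^{c}_{T}$ follows verbatim, with $2p(p-1)$ replaced by $13 p(p-1)$, $\norm{\hatSf_{n}}$ replaced by $\norm{\hatSc_{2n}}$, and the number of steps halved to $N/2$.

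The main obstacle is the circularity hidden in estimating $\E\big[(\text{exponential martingale})\cdot(\text{positive exponential correction})\big]$: the truncated density $R^{f}_{t_{N-1}}$ and the correction $\e^{\,2p(p-1)h\norm{\hatSf_{N-1}}^{2}}$ are measurable with respect to the same Brownian increments and are therefore not independent, so one cannot simply factor the expectation, and a naive single Hölder split trades a controlled power of $R^{f}$ for an uncontrolled exponential moment, or vice versa. Resolving this requires interleaving the two a priori estimates at different strengths — the sharp $\cO(h^{3})$ size of $\norm{\hatSf_{n}}^{2}$ from Theorem~\ref{thm:convergence} for the dominant contribution, and the $q$-uniform sub-Gaussian bound from Theorem~\ref{thm:stability} for the Hölder exponents — while ensuring that the per-step losses do not compound over the $N \sim T/h$ steps. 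It is this balance that pins the admissible step size to the scale $1/\sqrt{T\log T}$, and carrying it out carefully, together with the integrability checks needed to justify the conditioning identities, is the technical heart of the argument.
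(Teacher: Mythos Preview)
Your conditioning identity is correct and your diagnosis of the obstacle is honest, but the obstacle is fatal for the route you sketch and is not resolved by the vague ``interleaving'' you propose. The difficulty is not merely a matter of care: after any H\"older split of $\E\big[|R^{f}_{t_{N-1}}|^{p}(G_{N-1}-1)\big]$, one of the factors is $\E\big[|R^{f}_{t_{N-1}}|^{pr}\big]$ for some $r>1$, so an induction on $N$ at a fixed exponent never closes, and a simultaneous induction over all exponents loses a factor $r$ at every step. More fundamentally, the moment bounds of Theorems~\ref{thm:stability} and~\ref{thm:convergence}, used as black boxes, cannot control the exponential moment you need on the regime $h \sim (T\log T)^{-1/2}$. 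The best tail information extractable from Theorem~\ref{thm:convergence} is the sub-Gaussian estimate coming from the $p^{1/2}h^{1/2}$ branch, which gives $\E[\exp(\theta\|\hatY^{f}_{n}-\hatY^{c}_{n}\|^{2})]<\infty$ only for $\theta \lesssim h^{-1}$; but once you iterate your conditioning identity (or, equivalently, peel off the martingale part), the exponential you must bound carries $\theta$ of order $T$, and $Th \sim (h\log T)^{-1}\to\infty$. The $p^{3/2}h^{3/2}$ branch buys you smallness of low moments but worsens the $p$-growth to the point that no exponential moment follows at all. So the $\cO(h^{4})$-per-step recursion you want cannot be established from the moment theorems alone.

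The paper therefore takes a genuinely different route: it first strips off the martingale part via a supermartingale/H\"older split, reducing to bounding $\hat J_{n}=\E\big[\exp(cT\|\hatY^{f}_{n}-\hatY^{c}_{n}\|^{2})\big]$, and then---crucially---does \emph{not} appeal to Theorems~\ref{thm:stability} or~\ref{thm:convergence} as black boxes. Instead it reopens the one-step dynamics of $\|\hatY^{f}_{2n+2}-\hatY^{c}_{2n+2}\|^{2}$ in terms of $\|\hatY^{f}_{2n}-\hatY^{c}_{2n}\|^{2}$ and the Gaussian increments, and combines this with a truncation on the event $\Theta_{2n}=\{\|\hatY^{f}_{2n}-\hatY^{c}_{2n}\|\ge \nu_{1}|\log h|\}$: on $\Theta_{2n}$ the small probability from Markov plus Theorem~\ref{thm:convergence} absorbs the exponential; on $\Theta_{2n}^{\comp}$ the pathwise contraction (with the nonlinear cross-terms controlled because the difference is polylogarithmically bounded) yields a \emph{multiplicative} recursion $\hat J_{2n+2}\le C\,(\hat J_{2n})^{1-\tilde\gamma h}$, which telescopes to a bound independent of $N$ without ever invoking higher moments of $R^{f}$. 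This pathwise-contraction-plus-truncation mechanism is the missing idea in your proposal; without it the circularity you identify cannot be broken on the time-step scale $h_{\max}(T)$ stated in the theorem.
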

    We defer the proof to Section~\ref{sec:theoretical_proofs}. Theorem~\ref{thm:radon_nikodym} shows that the Radon--Nikodym derivative is uniformly bounded in time $T$ for all sufficiently small $h > 0$. Using Theorems~\ref{thm:stability}, \ref{thm:convergence} and \ref{thm:radon_nikodym}, we can obtain the bounds for the $L^{p}(\Omega)$-norm of the higher-order change-of-measure MLMC estimator. Before proceeding with the analysis, we consider two cases for the regularity of the payoff function $\varPhi \colon \bR^{d} \to \bR$. The first case we consider is that of a uniformly Lipschitz continuous payoff function
    \begin{asm}[uniform Lipschitz continuity of $\varPhi(x)$]
        \label{asmp:lipschitz_payoff}
        There exists a constant $K > 0$, such that \[ \abs{\varPhi(x) - \varPhi(y)} \leq K \norm{x - y}, \qquad \forall x, y \in \bR^{d}, \]
        where $K$ is the uniform Lipschitz constant.
    \end{asm}
    Using Assumption~\ref{asmp:lipschitz_payoff}, we can bound the moments of the MLMC estimator.
    \begin{thm}[Multilevel estimator moments - uniformly Lipschitz continuous payoff]
        \label{thm:convergence_estimator}
        Consider the SDE~\eqref{eq:SDE} and let Assumptions~\ref{asmp:lipschitz_a}, \ref{asmp:aD_lipschitz}, and \ref{asmp:dissipativity_a} hold. Let $\varPhi \colon \bR^{d} \to \bR$ denote a function that satisfies Assumption~\ref{asmp:lipschitz_payoff}. Then for any $S > \lambda / 2$, for all $T > 1$, $p \geq 1$, and for all $h \in \big( 0, h_{\max}(T) \big]$, it holds that
        \[ \E \Bigg[ \abs{\varPhi(\hatY_{T}^{f}) R_{T}^{f} - \varPhi(\hatY_{T}^{c}) R_{T}^{c}}^{p} \Bigg] ^ {1/p} \leq \kappa_{7} \sqrt{T} p^{5/2} h^{3/2}, \]
        where $\kappa_{7}> 0$ is a constant that is independent of $T$ and $p$.
    \end{thm}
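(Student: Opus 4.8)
The plan is to split the integrand, estimate each piece by Hölder's inequality together with Theorems~\ref{thm:stability},~\ref{thm:convergence} and~\ref{thm:radon_nikodym}, and isolate the Radon--Nikodym increment $R_T^{f}-R_T^{c}$ as the only genuinely new quantity.

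\emph{Step 1 (splitting).} Write
\[
    \varPhi(\hatY_T^{f})R_T^{f}-\varPhi(\hatY_T^{c})R_T^{c}
    = \big(\varPhi(\hatY_T^{f})-\varPhi(\hatY_T^{c})\big)R_T^{f} + \varPhi(\hatY_T^{c})\big(R_T^{f}-R_T^{c}\big).
\]
For the first summand, Assumption~\ref{asmp:lipschitz_payoff} gives $\abs{\varPhi(\hatY_T^{f})-\varPhi(\hatY_T^{c})}\leq K\norm{\hatY_T^{f}-\hatY_T^{c}}$; Hölder with exponents $(2,2)$, the cubic rate of Theorem~\ref{thm:convergence} at moment order $2p$, and Theorem~\ref{thm:radon_nikodym} bound its $L^{p}(\Omega)$ norm by $C p^{3/2}h^{3/2}\leq C\sqrt{T}p^{5/2}h^{3/2}$. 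For the second summand, $\abs{\varPhi(\hatY_T^{c})}\leq\abs{\varPhi(0)}+K\norm{\hatY_T^{c}}$, so Theorem~\ref{thm:stability} yields $\E[\abs{\varPhi(\hatY_T^{c})}^{2p}]^{1/(2p)}\leq C p^{1/2}$, and it remains to prove $\E[\abs{R_T^{f}-R_T^{c}}^{2p}]^{1/(2p)}\leq C\sqrt{T}p^{2}h^{3/2}$.

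\emph{Step 2 (linearising the RN increment).} Write $R_T^{f}=\e^{A_T^{f}}$ and $R_T^{c}=\e^{A_T^{c}}$ with $A_T^{f},A_T^{c}$ the exponents in~\eqref{eq:RfT_derivative} and~\eqref{eq:RcT_derivative}. From $\e^{x}-\e^{y}=(x-y)\int_{0}^{1}\e^{(1-s)y+sx}\di s$ and convexity of $\exp$, $\abs{R_T^{f}-R_T^{c}}\leq\tfrac12\abs{A_T^{f}-A_T^{c}}\,\big(R_T^{f}+R_T^{c}\big)$; Hölder with exponents $(4p,4p)$ and the uniform $L^{4p}(\Omega)$ bounds of Theorem~\ref{thm:radon_nikodym} reduce the task to showing $\E[\abs{A_T^{f}-A_T^{c}}^{4p}]^{1/(4p)}\leq C\sqrt{T}p^{2}h^{3/2}$. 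Using $\hatSc_{2n}=-\hatSf_{2n}$ (cf.~\eqref{eq:hatSf},~\eqref{eq:hatSc}) and pairing the fine steps $2n,2n+1$ against the coarse step $n$, one reads off $A_T^{f}-A_T^{c}=M_T+D_T$, where $M_T$ is a martingale transform of the Gaussian increments $\Delta V_{1,n},\Delta V_{2,n}$ with $\cF_{t_n}$-predictable coefficients whose norms are bounded by $CS\big(\norm{\hatY_{t_n}^{f}-\hatY_{t_n}^{c}}+\norm{\hatY_{t_{2\lfloor n/2\rfloor}}^{f}-\hatY_{t_{2\lfloor n/2\rfloor}}^{c}}\big)$, and $D_T=\sum_{n=0}^{N/2-1}\big(11h\norm{\hatSf_{2n}}^{2}-2h\norm{\hatSf_{2n+1}}^{2}\big)$ is $\cF$-predictable. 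Since $\abs{D_T}\leq CS^{2}h\sum_{n=0}^{N-1}\norm{\hatY_{t_n}^{f}-\hatY_{t_n}^{c}}^{2}$, the triangle inequality, Theorem~\ref{thm:convergence} (cubic rate at order $8p$) and $N=T/h$ give $\E[\abs{D_T}^{4p}]^{1/(4p)}\leq C T p^{3}h^{3}$. For $M_T$, since the increments are Gaussian given the past, embedding $M_T$ as the terminal value of a continuous martingale (interpolating with the driving Brownian paths) lets the Burkholder--Davis--Gundy inequality be applied without a maximal term, giving $\E[\abs{M_T}^{4p}]^{1/(4p)}\leq C\sqrt{p}\,\E[\langle M\rangle_T^{2p}]^{1/(4p)}$ with $\langle M\rangle_T=h\sum_n(\text{coefficient norms})^{2}$; estimating $\langle M\rangle_T$ termwise by Theorem~\ref{thm:convergence} at order $4p$ yields $\E[\langle M\rangle_T^{2p}]^{1/(4p)}\leq C\sqrt{T}p^{3/2}h^{3/2}$, hence $\E[\abs{M_T}^{4p}]^{1/(4p)}\leq C\sqrt{T}p^{2}h^{3/2}$.

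\emph{Step 3 (regime split and conclusion).} The cubic rate of Theorem~\ref{thm:convergence} requires $h$ small relative to the moment order, so Steps~1--2 are valid only when $h<\min(\kappa_3,\kappa_4/(8p)^{1/3})$, $8ph<1$ and $\sqrt{T}\,p\,h^{3/2}\leq 1$; in that regime the drift bound $Tp^{3}h^{3}=(\sqrt{T}\,p\,h^{3/2})\cdot\sqrt{T}\,p^{2}h^{3/2}\leq\sqrt{T}\,p^{2}h^{3/2}$ is subordinate, so $\E[\abs{A_T^{f}-A_T^{c}}^{4p}]^{1/(4p)}\leq C\sqrt{T}p^{2}h^{3/2}$ and combining with Steps~1--2 gives the claim. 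For all other $(p,h,T)$ with $h\leq h_{\max}(T)$ one instead applies Hölder with exponents $(2,2)$ together with Theorems~\ref{thm:stability} and~\ref{thm:radon_nikodym} to obtain the crude bound $\E[\abs{\varPhi(\hatY_T^{f})R_T^{f}-\varPhi(\hatY_T^{c})R_T^{c}}^{p}]^{1/p}\leq C p^{1/2}$; since $h\leq h_{\max}(T)$ keeps $Th^{3}$ bounded, violating any of the three conditions above forces $\sqrt{T}\,p^{2}h^{3/2}$ to be bounded below by a positive constant, whence $C p^{1/2}\leq C'\sqrt{T}\,p^{5/2}h^{3/2}$. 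Taking $\kappa_{7}$ to be the largest of the constants produced completes the proof. I expect the explicit expansion of $A_T^{f}-A_T^{c}$ into its martingale and predictable parts and the book-keeping needed to glue the two regimes into a single $p$-uniform estimate to be the delicate points, the Burkholder--Davis--Gundy and Theorem~\ref{thm:convergence} estimates themselves being routine.
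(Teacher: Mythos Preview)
Your argument is correct but follows a different decomposition from the paper. The paper splits as $\varPhi(\hatY_T^{f})(R_T^{f}-1)+(\varPhi(\hatY_T^{f})-\varPhi(\hatY_T^{c}))+\varPhi(\hatY_T^{c})(1-R_T^{c})$ and bounds $\abs{R_T^{f}-1}$ and $\abs{R_T^{c}-1}$ \emph{separately}; the point is that each exponent $E^{f},E^{c}$ already contains only inner products and norms of $\hatY^{f}-\hatY^{c}$, so $\abs{1-R_T^{f}}\leq\e^{\abs{E^{f}}}\abs{E^{f}}$ is small directly, without having to expand $A_T^{f}-A_T^{c}$ and pair fine against coarse terms. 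This avoids the book-keeping you flag as delicate in Step~2. Conversely, your two-term split is a bit more economical, and the martingale--plus--drift decomposition of $A_T^{f}-A_T^{c}$ you carry out is correct (the predictability of the coefficients follows since both $\hatSf_{n}$ and $\hatSf_{2\lfloor n/2\rfloor}$ are $\cF_{t_n}$-measurable). One genuine improvement in your write-up is Step~3: Theorem~\ref{thm:convergence} carries the moment-dependent restriction $h<\kappa_4/\sqrt[3]{p}$, and since the paper invokes it at orders $4p$ and $8p$ this constraint is in force there too; the paper's proof does not make the regime split explicit, whereas your crude-bound argument for large $p$ (using that $\sqrt{T}p^{2}h^{3/2}$ is bounded below once any of the smallness conditions fails) closes that gap cleanly.
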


    We defer the proof to Section~\ref{sec:theoretical_proofs}. The second case we consider is a class of discontinuous payoff functions of the following form $\varPhi(x) = \ind_{x \in G}$,
    for a Borel set $G \in \cB \big( \bR^{d} \big)$ and
    \begin{equation}
        \label{eq:indicator function}
        \begin{split}
            \ind_{x \in G} 
            \coloneqq \begin{cases}
                1, & \text{if } x \in G \\
                0, & \text{otherwise.} 
            \end{cases}
        \end{split}
    \end{equation}
    Let $\overline{G}$ denote the closure of the set $G$, and $\partial G$ denote the boundary of the set $G$. Note that $\partial G = \overline{G} \setminus \text{int} \big( G \big)$, where $\text{int} ( A )$ denotes the interior of the set $A \subset \bR^{d}$. We define the distance measure from a point $x \in \bR^{d}$ to the boundary $\partial G$ as
    \begin{equation}
        d_{\partial G}(x) \coloneqq \inf_{y \in \partial G} \norm{y - x}, \qquad \forall x \in \bR^{d}.
    \end{equation}
    Using the distance measure, we define the set
    \begin{equation}
        \label{eq:G_delta}
        \begin{split}
            G_{\delta} \coloneqq \Big\{ x \in \bR^{d} \; \big\vert \; d_{\partial G}(x) \leq \delta \Big\}
        \end{split}
    \end{equation}
    that denotes the set of points $x \in \bR^{d}$ that are distance $\delta$ away from the boundary $\partial G$.
    We make the following assumption on the probability density function of the numerical process
    \begin{asm}[Existence and boundedness of density]
        \label{asmp:bounded_density}
        For any $T > 1$, $\xi > 0$, and for all sufficiently small $h \in (0, h_{\max}(T)]$, there exists a probability density function $\rho_{\hatY_{T}^{f}}(x)$ associated to the numerical process $\hatY_{T}^{f}$, such that
        \[ \sup_{T > 1} \sup_{h \in (0, h_{\max}(T)]} \sup_{x \in \bR^{d}} \rho_{\hatY_{T}^{f}}(x) \leq C_{\rho} < \infty, \] where $C_{\rho} > 0$ is a constant that is independent of $h$ and $T$.
    \end{asm}
    Having made the necessary assumptions, we now state the following lemma that ensures that the distance measure is bounded
    \begin{lemma}[Probability of distance measure]
        \label{lem:bounded_distance}
        For all $\delta > 0$, for any $T > 1$, and for all sufficiently small $h \in \big( 0, h_{\max}(T) \big]$, assuming that there exists a constant $C_{v} > 0$ such that \[ \vol(G_{\delta}) \coloneqq \int_{G_{\delta}} \di x \leq C_{v} \delta \] holds, then there exists another constant $C_{b} > 0$ that is independent of $\delta$, $T$, and $h$ such that \[ \Prob \Big( d_{\partial G}(\hatY_{T}^{f}) \leq \delta \Big) \leq C_{b} \delta. \]
    \end{lemma}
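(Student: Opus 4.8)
The plan is to rewrite the probability in question as an integral of the density $\rho_{\hatY_T^f}$ over the tube $G_\delta$, bound the density uniformly by $C_\rho$ using Assumption~\ref{asmp:bounded_density}, and then invoke the volume hypothesis $\vol(G_\delta) \leq C_v \delta$. Concretely, I would first observe that, directly from the definition~\eqref{eq:G_delta} of $G_\delta$, the event $\{ d_{\partial G}(\hatY_T^f) \leq \delta \}$ coincides with $\{ \hatY_T^f \in G_\delta \}$, since $G_\delta$ is precisely the set of points at distance at most $\delta$ from $\partial G$. This requires only that $d_{\partial G}$ is a Borel-measurable function of $x$ (it is $1$-Lipschitz, being an infimum of distances) so that $G_\delta$ is a Borel set and the event is measurable.

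Next, for any $T > 1$ and any $h \in (0, h_{\max}(T)]$ small enough for Assumption~\ref{asmp:bounded_density} to apply, the law of $\hatY_T^f$ admits the density $\rho_{\hatY_T^f}$, so
\[
    \Prob \big( d_{\partial G}(\hatY_T^f) \leq \delta \big) = \Prob \big( \hatY_T^f \in G_\delta \big) = \int_{G_\delta} \rho_{\hatY_T^f}(x) \, \di x \leq C_\rho \int_{G_\delta} \di x = C_\rho \, \vol(G_\delta),
\]
where the inequality uses the uniform bound $\sup_{x \in \bR^d} \rho_{\hatY_T^f}(x) \leq C_\rho$ from Assumption~\ref{asmp:bounded_density}, which is valid uniformly over $T > 1$ and $h \in (0, h_{\max}(T)]$. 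Combining this with the hypothesis $\vol(G_\delta) \leq C_v \delta$ yields
\[
    \Prob \big( d_{\partial G}(\hatY_T^f) \leq \delta \big) \leq C_\rho C_v \delta,
\]
and setting $C_b \coloneqq C_\rho C_v$ gives the claim; since both $C_\rho$ and $C_v$ are independent of $\delta$, $T$, and $h$, so is $C_b$.

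There is no real obstacle here: the statement is essentially a packaging of Assumption~\ref{asmp:bounded_density} together with the geometric volume hypothesis. The only points that need a line of care are the measurability of the event (handled by Lipschitz continuity of $d_{\partial G}$) and the bookkeeping that the range of $h$ for which the density bound holds is exactly the range $(0, h_{\max}(T)]$ appearing in the lemma, so that no additional smallness restriction on $h$ is introduced beyond what Assumption~\ref{asmp:bounded_density} already imposes.
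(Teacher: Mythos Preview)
Your proposal is correct and follows essentially the same argument as the paper: identify the event $\{d_{\partial G}(\hatY_T^f)\le\delta\}$ with $\{\hatY_T^f\in G_\delta\}$, integrate the density over $G_\delta$, bound it by $C_\rho$ via Assumption~\ref{asmp:bounded_density}, and set $C_b\coloneqq C_\rho C_v$. Your additional remarks on measurability and the range of $h$ are sound but not needed beyond what the paper assumes.
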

    \begin{proof}
        Note that the event
        \[ \Big\{ \omega \in \Omega \; \big\vert \; d_{\partial G} ( \hatY_{T}^{f}(\omega) ) \leq \delta \Big\} = \Big\{ \omega \in \Omega \; \big\vert \; \hatY_{T}^{f}(\omega) \in G_{\delta} \Big\} \]
        Using the assumption that the volume of the set $G_{\delta}$ is bounded, we obtain
        \begin{equation*}
            \begin{split}
                \Prob \Big( \hatY_{T}^{f} \in G_{\delta} \Big) = \E \Big[ \ind_{\hatY_{T}^{f} \in G_{\delta}} \Big] &= \int_{\bR^{d}} \ind_{x \in G_{\delta}} \rho_{\hatY_{T}^{f}}(x) \di x \\
                &= \int_{G_{\delta}} \rho_{\hatY_{T}^{f}}(x) \di x \\
                &\leq \int_{G_{\delta}} \sup_{T > 1} \sup_{h \in (0, h_{\max}(T)]} \sup_{x \in \bR^{d}} \rho_{\hatY_{T}^{f}}(x) \di x \\
                &\leq C_{\rho} \Big( \underbrace{\int_{G_{\delta}} \di x}_{\eqqcolon \vol(G_{\delta})}\Big) \leq \underbrace{C_{\rho} C_{v}}_{\eqqcolon C_{b}} \delta.
            \end{split}
        \end{equation*}
    \end{proof}
    We now extend Theorem~\ref{thm:convergence_estimator} to the case of a discontinuous payoff function defined by equation~\eqref{eq:indicator function}.
    \begin{thm}[Multilevel estimator moments - discontinuous payoff]
        \label{thm:convergence_estimator_discont}
        Consider the SDE~\eqref{eq:SDE} and let Assumptions~\ref{asmp:lipschitz_a}, \ref{asmp:aD_lipschitz}, and \ref{asmp:dissipativity_a} hold. Let $\varPhi \colon \bR^{d} \to \bR$ denote the payoff function given by equation~\eqref{eq:indicator function}. Then for all $S > \lambda / 2$, $T > 1$, $p, q \geq 1$, $h \in \big( 0, h_{\max}(T) \big]$, and $\xi > 0$, it holds that
        \[ \E \Bigg[ \abs{\varPhi(\hatY_{T}^{f}) R_{T}^{f} - \varPhi(\hatY_{T}^{c}) R_{T}^{c}}^{p} \Bigg] \leq \kappa_{8} T^{p/2} p^{2p} (1 + q) q^{\frac{1}{2} \frac{q}{q + 1}} h^{\frac{3}{2} - \xi}, \]
        where $\kappa_{8}> 0$ is a constant that is independent of $T$, $p$, and $q$.
    \end{thm}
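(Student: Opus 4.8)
The plan parallels the structure of the bounded-payoff estimate: decompose
\[
\varPhi(\hatY_{T}^{f}) R_{T}^{f} - \varPhi(\hatY_{T}^{c}) R_{T}^{c}
= \varPhi(\hatY_{T}^{f})\big(R_{T}^{f} - R_{T}^{c}\big) + \big(\varPhi(\hatY_{T}^{f}) - \varPhi(\hatY_{T}^{c})\big) R_{T}^{c},
\]
raise to the power $p$ via $(a+b)^{p}\le 2^{p-1}(a^{p}+b^{p})$, and bound the two resulting expectations separately. For the weight-difference term, $\abs{\varPhi}\le 1$ gives $\E\big[\abs{\varPhi(\hatY_{T}^{f})}^{p}\abs{R_{T}^{f}-R_{T}^{c}}^{p}\big]\le\E\big[\abs{R_{T}^{f}-R_{T}^{c}}^{p}\big]$, and writing $R_{T}^{f}=\e^{A_{T}^{f}}$, $R_{T}^{c}=\e^{A_{T}^{c}}$ from the exponents in \eqref{eq:RfT_derivative}--\eqref{eq:RcT_derivative}, the elementary inequality $\abs{\e^{x}-\e^{y}}\le\abs{x-y}(\e^{x}+\e^{y})$ together with Hölder's inequality reduces this to estimating $\E\big[\abs{A_{T}^{f}-A_{T}^{c}}^{2p}\big]^{1/2}$, the remaining factor $\E\big[(\e^{A_{T}^{f}}+\e^{A_{T}^{c}})^{2p}\big]^{1/2}$ being $\cO(1)$ by Theorem~\ref{thm:radon_nikodym}. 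Since $\hatSf_{n}=S(\hatY_{n}^{c}-\hatY_{n}^{f})$ and $\hatSc_{2n}=-\hatSf_{2n}$, each increment of $A_{T}^{f}-A_{T}^{c}$ splits into a $\PMeas$-martingale part built from stochastic integrals of $\hatSf_{\cdot}$ against the Gaussian increments, and a drift part of size $h\norm{\hatSf_{\cdot}}^{2}$; a discrete Burkholder--Davis--Gundy inequality, combined with $\norm{\hatSf_{n}}=S\norm{\hatY_{n}^{c}-\hatY_{n}^{f}}$ and the $L^{p}$-bound $\E[\norm{\hatY_{n}^{f}-\hatY_{n}^{c}}^{p}]^{1/p}\le\kappa_{5}p^{3/2}h^{3/2}$ from Theorem~\ref{thm:convergence} (and Gaussian moments of the increments), gives $\E\big[\abs{A_{T}^{f}-A_{T}^{c}}^{2p}\big]^{1/(2p)}=\cO\big(\sqrt{T}\,p^{2}h^{3/2}\big)$, the $\sqrt{T}$ arising from summing on the order of $T/h$ martingale differences of size $h^{2}$ (the drift part is lower order for $h\le h_{\max}(T)$). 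Hence the weight-difference term is of order $T^{p/2}p^{2p}h^{3p/2}$, which for $0<h\le 1$ and $p\ge 1$ is at most $T^{p/2}p^{2p}h^{3/2-\xi}$ up to the indicated constant.

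The genuinely new part is the payoff-difference term. With $\varPhi(x)=\ind_{x\in G}$ the quantity $\abs{\varPhi(\hatY_{T}^{f})-\varPhi(\hatY_{T}^{c})}$ is $\{0,1\}$-valued and equals one only on the event $E$ that $\hatY_{T}^{f}$ and $\hatY_{T}^{c}$ lie on opposite sides of $\partial G$. For any threshold $\delta>0$ one has the pointwise bound $\ind_{E}\le\ind_{A\cup B}$ with $A=\{d_{\partial G}(\hatY_{T}^{f})\le\delta\}$ and $B=\{\norm{\hatY_{T}^{f}-\hatY_{T}^{c}}>\delta\}$, because if $\hatY_{T}^{f}$ is farther than $\delta$ from $\partial G$ and $\hatY_{T}^{c}$ is within $\delta$ of $\hatY_{T}^{f}$, both points are on the same side. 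Hölder's inequality with a conjugate pair $(s,s')$, using $\E[\abs{R_{T}^{c}}^{ps'}]\le\eta_{2}$ (uniformly in the exponent and in $T$, by Theorem~\ref{thm:radon_nikodym}), yields
\[
\E\big[\abs{\varPhi(\hatY_{T}^{f})-\varPhi(\hatY_{T}^{c})}^{p}\abs{R_{T}^{c}}^{p}\big]
\le\eta_{2}^{1/s'}\big(\Prob(A)+\Prob(B)\big)^{1/s}.
\]
Lemma~\ref{lem:bounded_distance} (which rests on the bounded-density Assumption~\ref{asmp:bounded_density}) gives $\Prob(A)\le C_{b}\delta$, and Markov's inequality together with Theorem~\ref{thm:convergence} gives $\Prob(B)\le\delta^{-q}\E[\norm{\hatY_{T}^{f}-\hatY_{T}^{c}}^{q}]\le\delta^{-q}\kappa_{5}^{q}q^{3q/2}h^{3q/2}$. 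Balancing the two in $\delta$ (the optimal choice being $\delta\sim h^{3q/(2(q+1))}$, up to $q$-dependent factors which, after taking the $(q+1)$-th root, are of order $q^{3/2}$) bounds $\Prob(A)+\Prob(B)$ by a universal constant times $(1+q)\,q^{\frac12\frac{q}{q+1}}\,h^{3q/(2(q+1))}$; taking $s$ close to $1$ and $q$ large enough relative to $\xi$ that $\tfrac{3q}{2(q+1)s}\ge\tfrac32-\xi$ then produces the factor $h^{3/2-\xi}$. Crucially, this estimate is uniform in $T$, since Lemma~\ref{lem:bounded_distance}, Theorem~\ref{thm:convergence}, and the moment bound on $R_{T}^{c}$ all are.

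Adding the two contributions and absorbing the remaining $p$- and $q$-dependent prefactors into $T^{p/2}p^{2p}$ and $(1+q)q^{\frac12\frac{q}{q+1}}$ for $h$ sufficiently small yields the claimed bound with $\kappa_{8}$ independent of $T$, $p$, and $q$. The main obstacle is the payoff-difference term: one must control the boundary-straddling event $E$ jointly in the threshold $\delta$, the coupling-moment exponent $q$, and the Hölder exponent $s$, so that the power of $h$ can be pushed arbitrarily close to $\tfrac32$ while the constant stays finite; and one must verify that the density bound of Lemma~\ref{lem:bounded_distance}, the uniform-in-$T$ coupling estimate of Theorem~\ref{thm:convergence}, and the Radon--Nikodym moment bound of Theorem~\ref{thm:radon_nikodym} are genuinely uniform in $T$, so that no growth in $T$ enters beyond the $T^{p/2}$ contributed by the weight-difference term.
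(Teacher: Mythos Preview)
Your proposal is correct and reaches the same conclusion, but the decomposition differs from the paper's. The paper inserts $\pm\varPhi(\hatY_{T}^{f})$ and $\pm\varPhi(\hatY_{T}^{c})$ to obtain three pieces,
\[
\varPhi(\hatY_{T}^{f})(R_{T}^{f}-1)\;+\;\bigl(\varPhi(\hatY_{T}^{f})-\varPhi(\hatY_{T}^{c})\bigr)\;+\;\varPhi(\hatY_{T}^{c})(1-R_{T}^{c}),
\]
so the payoff-difference term $\tildeJ_{1}=\E\bigl[\abs{\varPhi(\hatY_{T}^{f})-\varPhi(\hatY_{T}^{c})}^{p}\bigr]$ carries no Radon--Nikodym factor and is bounded directly by $\Prob(A)+\Prob(B)$ with the same $\delta$-splitting you use; the two weight terms are handled via the mean-value identity $\e^{E}-1=\e^{\xi}E$ and a moment bound on $E^{f}$ (resp.\ $E^{c}$), exactly analogous to your $\abs{\e^{x}-\e^{y}}\le\abs{x-y}(\e^{x}+\e^{y})$. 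Your two-term split is the standard MLMC decomposition; its cost is the extra H\"older step with conjugate pair $(s,s')$ needed to peel off $R_{T}^{c}$ from the payoff-difference, which forces $s\to1$ and consumes a small part of the exponent (absorbed into $\xi$). The paper's three-term split avoids this, giving the stated $q$-dependence $(1+q)q^{\frac{1}{2}\frac{q}{q+1}}$ and the exponent $h^{3q/(2(q+1))}$ without the $1/s$-root. Both routes are valid; the paper's is marginally cleaner for the exact form of the constant, while yours would be more natural if $\varPhi$ were unbounded.
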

    We defer the proof to Section~\ref{sec:theoretical_proofs}. From Theorems~\ref{thm:convergence_estimator} and \ref{thm:convergence_estimator_discont} we have that the variance of the MLMC estimator is $\cO(T h^{3})$ and $\cO (T h^{3/2 - \xi})$ for uniformly Lipschitz continuous and discontinuous payoff functions, respectively. Having established all the necessary theoretical results, we now determine the computational cost of implementing the higher-order change-of-measure MLMC method.
    \begin{thm}[MLMC computational complexity]
        \label{thm:mlmc_cost}
        Let $\hatphi$ denote the MLMC estimator corresponding to the higher-order change-of-measure scheme
        \begin{equation*}
            \begin{split}
                \hatphi &\coloneqq \frac{1}{N_{0}} \sum_{k = 1}^{N_{0}} \big( \varPhi(\barX_{T}^{0}) \big)_{k} + \sum_{\ell = 1}^{L} \frac{1}{N_{\ell}} \sum_{k = 1}^{N_{\ell}} \big( \varPhi(\hatY_{T}^{f, \ell}) R_{T}^{f, \ell} - \varPhi(\hatY_{T}^{c, \ell}) R_{T}^{c, \ell} \big)_{k},
            \end{split}
        \end{equation*}
        where $N_{\ell}$ denotes the number of Monte Carlo samples on each level of the MLMC estimator.
        Let Assumptions~\ref{asmp:lipschitz_a}, \ref{asmp:aD_lipschitz}, and \ref{asmp:dissipativity_a} hold. Then for any sufficiently small error tolerance $\epsilon > 0$, and for a suitable choice of $L, T, N_{\ell}$, and $h_{0}$, the mean square error (MSE) of the estimator $\hatphi$ is bounded as
        \begin{equation*}
            \E \Big[ \big( \hatphi - \E[\varPhi(X_{\infty})] \big)^{2} \Big] \leq \epsilon^{2}
        \end{equation*}
        and the expected total computational cost of the MLMC estimator is
        \begin{equation*}
            \Cost = \cO \big( \epsilon^{-2} \abs{\log \epsilon}^{3/2} (\log \abs{\log \epsilon})^{1/2} \big),
        \end{equation*}
        for all QoI that satisfy Assumption~\ref{asmp:lipschitz_payoff}. The expected total computational cost is
        \begin{equation*}
            \Cost = \cO \big( \epsilon^{-2} \abs{\log \epsilon}^{5/3 + \xi} \big),
        \end{equation*}
        for all $\xi > 0$, QoI of the form given by equation~\eqref{eq:indicator function}, and satisfying Assumption~\ref{asmp:bounded_density}.
    \end{thm}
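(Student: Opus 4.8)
\emph{Strategy.} The plan is to run the standard multilevel Lagrange--multiplier cost argument on top of the three-way error split that is characteristic of invariant-measure computations, while respecting the coupling between the base step size $h_0$ and the horizon $T$ imposed by $h_{\max}(T)$. Since $\hatphi$ is, for fixed $L$, $T$, $\{N_\ell\}$, an unbiased Monte Carlo estimator of $\E^{\QMeas}[\varPhi(\barX_T^L)]$ obtained by telescoping, the mean square error decomposes as
\[
\E\big[(\hatphi - \E[\varPhi(X_\infty)])^2\big] = \sum_{\ell=0}^{L}\frac{V_\ell}{N_\ell} + \big(B_{\mathrm{disc}} + B_{\mathrm{trunc}}\big)^2,
\]
with $V_0 = \Var^{\QMeas}[\varPhi(\barX_T^0)]$, $V_\ell = \Var^{\PMeas}\big[\varPhi(\hatY_T^{f,\ell})R_T^{f,\ell} - \varPhi(\hatY_T^{c,\ell})R_T^{c,\ell}\big]$ for $\ell\ge1$, $B_{\mathrm{trunc}} = \E^{\QMeas}[\varPhi(X_T)] - \E[\varPhi(X_\infty)]$ and $B_{\mathrm{disc}} = \E^{\QMeas}[\varPhi(\barX_T^L)] - \E^{\QMeas}[\varPhi(X_T)]$. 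The goal is to pick $T$, $h_0$, $L$ and $\{N_\ell\}$ so that each of the three contributions is at most $\epsilon^2/3$.

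\emph{Controlling the bias.} By~\eqref{eq:ergodic_geomtric}, $\abs{B_{\mathrm{trunc}}}\le\mu^{\star}\e^{-\lambda^{\star}T}$, so taking $T = T(\epsilon) \coloneqq (\lambda^{\star})^{-1}\log(\sqrt 3\,\mu^{\star}/\epsilon) = \Theta(\abs{\log\epsilon})$ forces $\abs{B_{\mathrm{trunc}}}\le\epsilon/\sqrt3$. For $B_{\mathrm{disc}}$, the telescoping identity $\E^{\QMeas}[\varPhi(\barX_T^m)] = \E^{\QMeas}[\varPhi(\barX_T^0)] + \sum_{\ell=1}^{m}\E^{\PMeas}\big[\varPhi(\hatY_T^{f,\ell})R_T^{f,\ell} - \varPhi(\hatY_T^{c,\ell})R_T^{c,\ell}\big]$ together with strong convergence of the order 1.5 It\^o--Taylor scheme (and, for the indicator payoff, the fact that the additive non-degenerate noise gives $X_T$ a density, so $\PMeas(X_T\in\partial G)=0$ by $\vol(\partial G)=\lim_{\delta\downarrow0}\vol(G_\delta)=0$, whence $\varPhi(\barX_T^m)\to\varPhi(X_T)$ in probability and dominated convergence applies) shows that $\E^{\QMeas}[\varPhi(\barX_T^m)]\to\E^{\QMeas}[\varPhi(X_T)]$. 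Hence $B_{\mathrm{disc}} = -\sum_{\ell>L}\E^{\PMeas}[\,\cdot\,]$, and Theorem~\ref{thm:convergence_estimator} (resp.\ Theorem~\ref{thm:convergence_estimator_discont} with $q$ fixed) with $p=1$ plus a geometric series give $\abs{B_{\mathrm{disc}}} = \cO(\sqrt T\,h_L^{3/2})$ (resp.\ $\cO(\sqrt T\,h_L^{3/2-\xi})$). Choosing $L$ as the smallest integer making this $\le\epsilon/\sqrt3$ yields $h_L = \Theta(\epsilon^{2/3}T^{-1/3})$ (resp.\ $h_L = \Theta((\epsilon/\sqrt T)^{2/(3-2\xi)})$), and since $T$ is only polylogarithmic in $\epsilon$ this means $L = \cO(\abs{\log\epsilon})$ levels in both cases.

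\emph{Controlling the variance and the cost.} A sample on level $\ell$ uses $\cO(T/h_\ell) = \cO(T2^{\ell}/h_0)$ time steps for the fine and coarse paths and the Radon--Nikodym products, so $C_\ell = \cO(T2^{\ell}/h_0)$. By Theorem~\ref{thm:stability_wo_spring} and uniform Lipschitz continuity (resp.\ boundedness) of $\varPhi$, $V_0 = \cO(1)$ uniformly in $T$; by Theorems~\ref{thm:convergence_estimator} and~\ref{thm:convergence_estimator_discont} (with $p=2$), $V_\ell = \cO(Th_\ell^{3})$ (resp.\ $V_\ell = \cO(Th_\ell^{3/2-\xi})$) for $\ell\ge1$. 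With $N_\ell = \big\lceil 3\epsilon^{-2}\sqrt{V_\ell/C_\ell}\sum_{k=0}^{L}\sqrt{V_kC_k}\,\big\rceil$ the statistical variance is $\le\epsilon^2/3$ and $\Cost \le 3\epsilon^{-2}\big(\sum_{\ell=0}^{L}\sqrt{V_\ell C_\ell}\big)^2 + \sum_{\ell=0}^{L}C_\ell$. In the uniformly Lipschitz case $\sqrt{V_\ell C_\ell} = \cO(Th_0 2^{-\ell})$ for $\ell\ge1$ and $\sqrt{V_0C_0} = \cO(\sqrt{T/h_0})$; the admissible choice $h_0 = h_{\max}(T) = \Theta(T^{-1/2}(\log T)^{-1/2})$ makes the level-$0$ term dominate the geometric tail, giving $\big(\sum_\ell\sqrt{V_\ell C_\ell}\big)^2 = \cO(T^{3/2}(\log T)^{1/2})$ and, using $T=\Theta(\abs{\log\epsilon})$, $\Cost = \cO(\epsilon^{-2}\abs{\log\epsilon}^{3/2}(\log\abs{\log\epsilon})^{1/2})$, the residual $\sum_\ell C_\ell = \cO(T/h_L)$ being of strictly lower order in $\epsilon^{-1}$. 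In the discontinuous case $\sqrt{V_\ell C_\ell} = \cO(Th_\ell^{1/4-\xi/2})$ for $\ell\ge1$, so $\sum_{\ell\ge1}\sqrt{V_\ell C_\ell} = \cO(Th_0^{1/4-\xi/2})$; here $h_{\max}(T)$ does not bind, and balancing this against $\sqrt{V_0C_0} = \cO(\sqrt{T/h_0})$ gives the optimal $h_0 = \Theta(T^{-2/(3-2\xi)}) \ll h_{\max}(T)$, whence $\big(\sum_\ell\sqrt{V_\ell C_\ell}\big)^2 = \cO(T^{(5-2\xi)/(3-2\xi)})$ and, after relabelling $\xi$, $\Cost = \cO(\epsilon^{-2}\abs{\log\epsilon}^{5/3+\xi})$, again with $\sum_\ell C_\ell$ of lower order.

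\emph{Main obstacle.} The routine part is the Lagrange-multiplier bookkeeping; the two genuinely delicate points are, first, the joint optimisation in the discontinuous case, where one must tune $T$, $h_0$ and $L$ simultaneously against the constraint $h_0\le h_{\max}(T)$ and carry the slack $\xi$ (and the auxiliary parameter $q$ of Theorem~\ref{thm:convergence_estimator_discont}) through the algebra so that the exponent of $\abs{\log\epsilon}$ collapses to $5/3 + \xi$ rather than the value $7/4$ one obtains by forcing $h_0=h_{\max}(T)$; and second, the passage $\E^{\QMeas}[\varPhi(\barX_T^L)]\to\E^{\QMeas}[\varPhi(X_T)]$ for the indicator payoff, which is not immediate from strong convergence and requires the non-degeneracy of the noise (to exclude mass on $\partial G$) together with a continuous-mapping and dominated-convergence argument; Lemma~\ref{lem:bounded_distance} is the relevant tool for making this quantitative.
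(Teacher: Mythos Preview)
Your proposal is correct and follows essentially the same route as the paper: the three-way MSE split, the choice $T=\Theta(\abs{\log\epsilon})$ from~\eqref{eq:ergodic_geomtric}, the level-wise bias and variance bounds from Theorems~\ref{thm:convergence_estimator} and~\ref{thm:convergence_estimator_discont}, the standard Lagrange-multiplier choice of $N_\ell$, and the same selections $h_0=\Theta((T\log T)^{-1/2})$ (Lipschitz) and $h_0=\Theta(T^{-2/(3-2\xi)})$ (indicator) leading to the stated costs. If anything you are slightly more careful than the paper in two places: you explicitly justify writing $B_{\mathrm{disc}}$ as the tail sum for the indicator payoff (the paper simply asserts it), and you explain why $h_{\max}(T)$ does not bind in the discontinuous case, where the paper just states the choice of $h_0$.
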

    We defer the proof to Section~\ref{sec:theoretical_proofs}. The computational cost argument similarly applies to the Milstein scheme. Since the Radon--Nikodym term for the Milstein scheme \cite[equation~(13)]{fang2019multilevel} shares a similar structure with the higher-order scheme given by equations~\eqref{eq:RfT_derivative} and \eqref{eq:RcT_derivative}, and considering that we have not utilized the $\cO ( h^{3/2} )$ convergence rate in deriving Theorem~\ref{thm:radon_nikodym}, we believe the results can be extended to the Milstein scheme, yielding the condition $h_{0} = \cO (1 / \sqrt{T \log T})$. Assuming that this is the case, for a uniformly Lipschitz continuous payoff function, recall that
    \begin{equation*}
        \begin{split}
            \abs{\E \big[ \varPhi(\hatY_{T}^{f, \ell}) R_{T}^{f, \ell} - \varPhi(\hatY_{T}^{c, \ell}) R_{T}^{c, \ell} \big]} &\leq C_{1} \sqrt{T} ( 2^{-\ell} h_{0} ) \\
            \Var \Big( \varPhi(\hatY_{T}^{f, \ell}) R_{T}^{f, \ell} - \varPhi(\hatY_{T}^{c, \ell}) R_{T}^{c, \ell} \Big) &\leq c_{1} T (2^{-\ell} h_{0})^{2}
        \end{split}, \qquad \forall \ell \geq 1,
    \end{equation*}
    where $c_{1}, C_{1} > 0$ are constants.
    Then for the choice of $h_{0} = \cO (1 / \sqrt{T \log T})$ and
    \[ L = \ceil{ \log_{2}  \Big( \epsilon^{-1} \abs{\log T}^{-1/2} \Big) + \log_{2} (\sqrt{6} \widetilde{C}_{1} ) }, \]
    where $\widetilde{C}_{1} > 0$ is a constant, the computational cost is
    \[ \Cost = \cO \Big( \epsilon^{-2} \abs{\log \epsilon}^{3/2} (\log \abs{\log \epsilon})^{1/2} \Big). \]
    For a discontinuous payoff function of the form given by equation~\eqref{eq:indicator function} and for all $\xi > 0$, we can show that the strong error rate and the variance decay rate for the Milstein scheme are bounded as
    \begin{equation*}
        \begin{split}
            \abs{\E \big[ \varPhi(\hatY_{T}^{f, \ell}) R_{T}^{f, \ell} - \varPhi(\hatY_{T}^{c, \ell}) R_{T}^{c, \ell} \big]} &\leq C_{1} \sqrt{T} ( 2^{-\ell} h_{0} )^{1 - \xi} \\
            \Var \Big( \varPhi(\hatY_{T}^{f, \ell}) R_{T}^{f, \ell} - \varPhi(\hatY_{T}^{c, \ell}) R_{T}^{c, \ell} \Big) &\leq c_{1} T (2^{-\ell} h_{0})^{1 - \xi}
        \end{split}, \qquad \forall \ell \geq 1,
    \end{equation*}
    where $c_{1}, C_{1} > 0$ are constants. Then for the choice of $h_{0} = \cO \Big( 1 / \sqrt{T \log T} \Big)$ and
    \[ L = \ceil{\frac{1}{1 - \xi} \log_{2} \Big( \epsilon^{-1} \sqrt{T} h_{0}^{1 - \xi} \Big) + \frac{1}{1 - \xi} \log_{2} (\sqrt{6} \widetilde{C}_{1}) }, \]
    where $\widetilde{C}_{1} > 0$ is a constant, all $\xi > 0$, and $\hat\xi = \frac{\xi}{1 - \xi}$, we obtain
    \[ \Cost = \cO \Big( \epsilon^{-(2 + \hat\xi)} \abs{\log \epsilon}^{2 + \hat\xi/2} \Big). \]

    \section{Numerical results}
    \label{sec:numerical_results}

    In this section, we run several numerical simulations to verify the results obtained in Section~\ref{sec:notation_theorem}. Algorithm~\ref{alg:higher_order} describes the implementation of the higher-order change-of-measure scheme to compute the weak approximations of the invariant measure.

    \begin{algorithm}[ht]
        \begin{algorithmic}[1]
            \Require Initial condition $x_{0}$, Terminal time $T$, \# of multilevel estimator levels $L$, \# of Monte Carlo samples $M$, Time-step size on level--0 $h_{0}$.
            \For{$i = 1,\ldots,M$}
                \State Initialize the fine and the coarse trajectories at time $t = 0$, i.e. $\displaystyle \hatY_{0}^{f} = \hatY_{0}^{c} = x_{0}$.
                \State Set the initial values for the Radon--Nikodym derivatives, i.e. $R_{0}^{f} = R_{0}^{c} = 1$.
                \For{$n = 0, \ldots, N/2 - 1$}
                    \LComment{Updating the state of the process from $t_{2n}$ to $t_{2n + 1}$}
                    \State Using equation~\eqref{eq:random_increments}, generate the tuple of correlated random increments $(\Delta W_{2n}, \Delta Z_{2n})$.
                    \State Update the fine and the coarse trajectories using equations~\eqref{eq:fine_odd} and \eqref{eq:coarse_odd}.
                    \State Compute the Radon--Nikodym derivative $R^{f}_{2n}$ for the fine trajectory using equation~\eqref{eq:RfT_derivative}, i.e.
                    \begin{equation*}
                        \begin{multlined}
                            R_{2n + 1}^{f} = R_{2n}^{f} \cdot \exp \Bigg( - \inner{\hatSf_{2n}, \Delta V_{1, 2n}} + \sqrt{3} \inner{\hatSf_{2n}, \Delta V_{2, 2n}} - 2 h \norm{\hatSf_{2n}}^{2} \Bigg)
                        \end{multlined}
                    \end{equation*}
                    \LComment{Updating the state of the process from $t_{2n + 1}$ to $t_{2n + 2}$}
                    \State Using equation~\eqref{eq:random_increments}, generate the tuple of correlated random increments $(\Delta W_{2n + 1}, \Delta Z_{2n + 1})$.
                    \State Update the fine and the coarse trajectories using equations~\eqref{eq:fine_even} and \eqref{eq:coarse_even}.
                    \State Compute the Radon--Nikodym derivatives, $R_{2n + 1}^{f}$ and $R_{2n + 1}^{c}$, for the fine and the coarse trajectories, respectively, using equations~\eqref{eq:RfT_derivative} and \eqref{eq:RcT_derivative}, i.e.
                    \begin{equation*}
                        \begin{split}
                            R_{2n + 2}^{f} &= R_{2n + 1}^{f} \cdot \exp \bigg( - \inner{\hatSf_{2n + 1}, \Delta V_{1, 2n + 1}} \\
                            &\qquad \qquad \qquad \qquad + \sqrt{3} \inner{\hatSf_{2n + 1}, \Delta V_{2, 2n + 1}} - 2 h \norm{\hatSf_{2n + 1}}^{2} \bigg) \\
                            ~
                            R_{2n + 2}^{c} &= R_{2n}^{c} \cdot \exp \bigg( - \inner{\hatSc_{2n}, \Delta V_{1, 2n} + \Delta V_{1, 2n + 1}} + 2 \sqrt{3} \inner{\hatSc_{2n}, \Delta V_{2, 2n} + \Delta V_{2, 2n + 1}} \\
                            &\qquad \qquad \qquad \qquad - 13 h \norm{\hatSc_{2n}}^{2} \bigg)
                        \end{split}
                    \end{equation*}
                \EndFor
            \EndFor
            \Return For a given payoff function, $\varPhi \colon \bR^{d} \to \bR$, the expected value of the payoff w.r.t. the invariant measure, i.e.
            \begin{equation*}
                \E \Bigg[ \abs{ \varPhi \big( \hatY_{T}^{f} \big) R_{T}^{f} - \varPhi \big( \hatY_{T}^{c} \big) R_{T}^{c} }^{p} \Bigg]^{1/p}
            \end{equation*}
        \end{algorithmic}
        \caption{Higher-order change-of-measure algorithm}
        \label{alg:higher_order}
    \end{algorithm}

    \subsection{1D Triple-well potential -- Discontinuous payoff}
    \label{subsec:1d_triplewell}
    We start with the numerical tests by considering the 1D SDE
    \begin{equation*}
        \begin{split}
            \di X &= \frac{X^3 (2 - X^2) (X^8 + 2 X^6 + 4 X^2 - 4)}{2 (X^6 + 1)^2} \di t + \di W \\
            X(0) &= 1,
        \end{split}
    \end{equation*}
    corresponding to the potential function $f(X) \coloneqq ( X^4- 2 X^2 )^2 / (4 ( X^6 + 1 ))$. The invariant probability density of the SDE is \[ \rho(X) \propto \exp(-2 f(x)) = \exp \Big( - \frac{( X^4- 2 X^2 )^2}{2 ( X^6 + 1 )} \Big), \] and is as depicted in Figure~\ref{fig:triple_well_ergodic rate}. Note that the drift coefficient satisfies Assumptions~\ref{asmp:lipschitz_a}, \ref{asmp:aD_lipschitz}, and \ref{asmp:dissipativity_a}. We are interested in computing the weak approximation of the payoff function $\varPhi(x) \coloneqq \ind_{x \in [0, 2]}$ w.r.t. the invariant measure of the SDE for the Double-well potential. The reference solution to the weak approximation of the payoff function w.r.t the invariant measure was computed by solving the elliptic Fokker--Planck PDE \cite[equation~(8.24)]{weinan2021applied}, using the continuous, piecewise quadratic finite elements for the spatial discretization. For this purpose, we use the \texttt{Fenics} library in \texttt{Python} programming language. The pseudo-reference solution is $\E [ \varPhi(X_{\infty}) ] = 0.42863$, rounded to 5 significant digits. We run the simulation for $M = 10^{8}$ Monte Carlo samples, until the terminal time $T = 10$, to compute the convergence rate for the higher-order numerical scheme. We use an initial time-step size $h_{0} = 2^{-5}$.

    \begin{figure}[ht]
        \centering
        \includegraphics[width=0.49\linewidth]{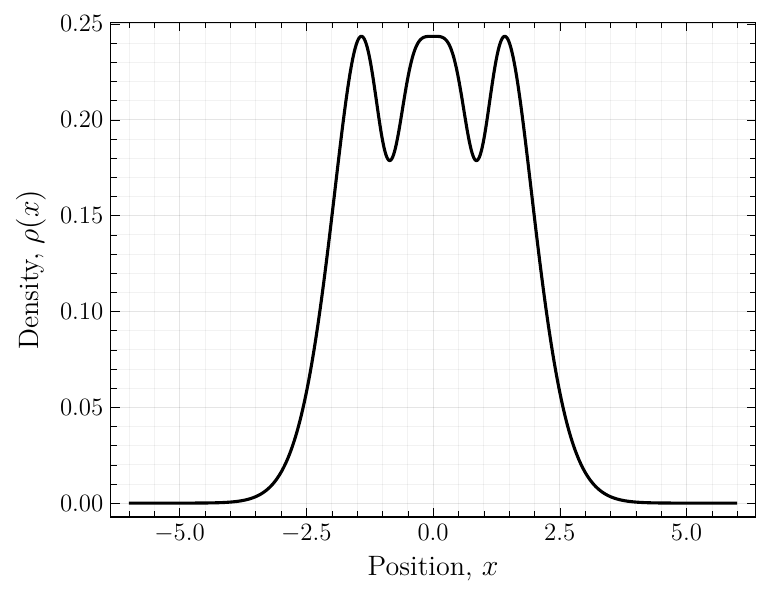}
        \includegraphics[width=0.49\linewidth]{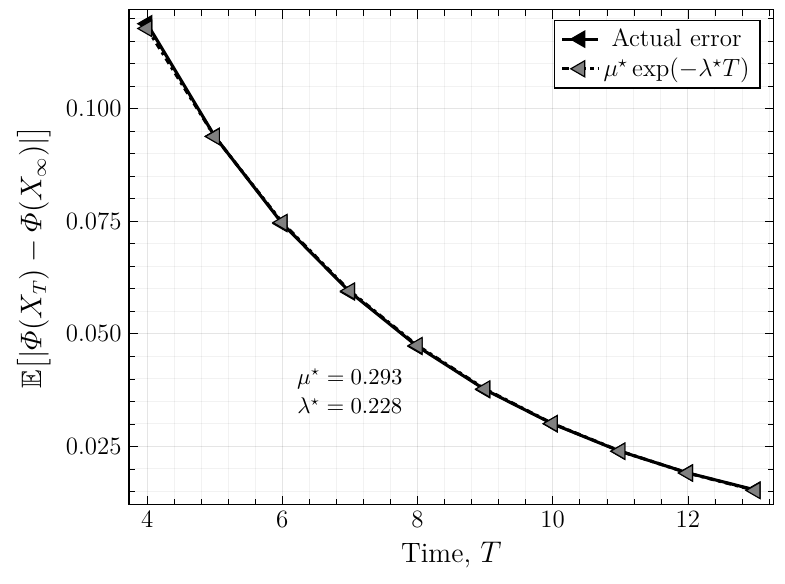}
        \includegraphics[width=0.49\linewidth]{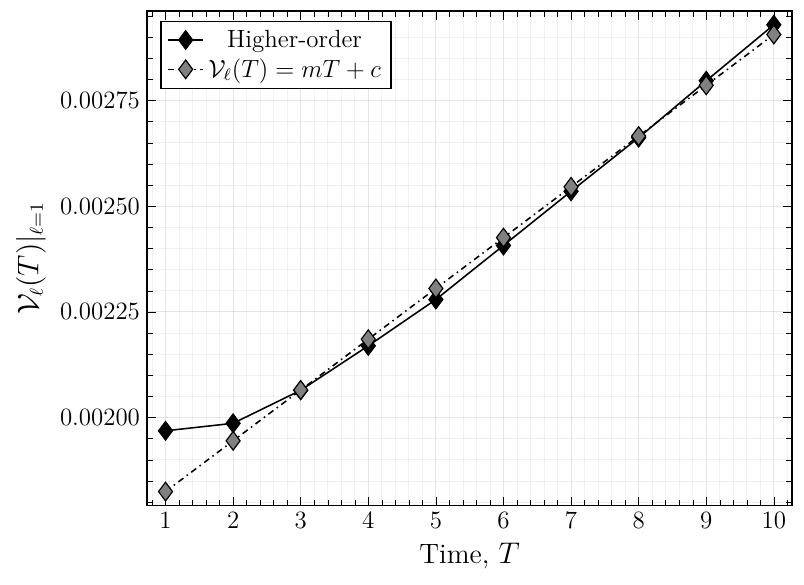}
        \caption{Plots associated to the SDE considered in Section~\ref{subsec:1d_triplewell}: Probability density function of the triple-well potential (top left), Curve fit to compute the parameters $\mu^{\star}$ and $\lambda^{\star}$ (top right), Variance growth rate of the MLMC estimator w.r.t terminal time on level $\ell = 1$ (bottom).}
        \label{fig:triple_well_ergodic rate}
    \end{figure}

    Figure~\ref{fig:triple_well_ergodic rate} shows that the variance of the MLMC estimator increases linearly with time $T$ for a fixed time-step size $h$, as shown in Theorem~\ref{thm:convergence_estimator_discont}. Figure~\ref{fig:triple_well_1D_discontinuous} shows that the convergence rates obtained from the numerical simulation are consistent with theory. We observe that both the strong error and the variance rate for the higher-order change-of-measure scheme are $\cO(h^{3/2})$ as proved in Theorem~\ref{thm:convergence_estimator_discont}. Recall that the set $\Theta_{T} \coloneqq  \big\{ \omega \in \Omega \; \big\vert \; \norm{\hatY_{T}^{f} - \hatY_{T}^{c}} \geq \nu_{1} \abs{\log h} \big\}$ is as defined in Theorem~\ref{thm:radon_nikodym}. From Figure~\ref{fig:triple_well_1D_discontinuous}, we also observe that the probability of the fine and the coarse trajectories diverging $\Prob (\Theta_{T}) = 0$ for $\nu_{1} = 1$. We observe that the kurtosis increases with the level of the multilevel estimator. This can be attributed to our computing the weak approximation of a discontinuous payoff function.
    
    \begin{figure}[ht]
        \centering
        \includegraphics[width=0.49\linewidth]{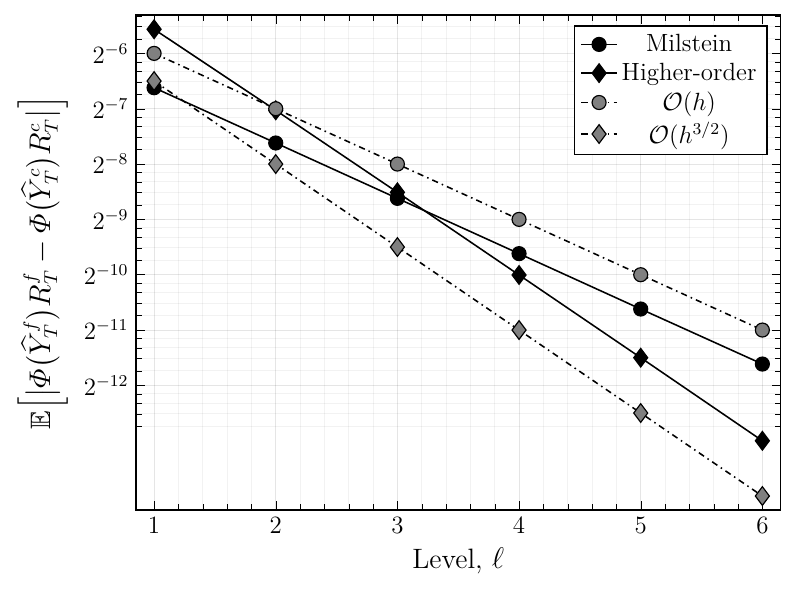}
        \includegraphics[width=0.49\linewidth]{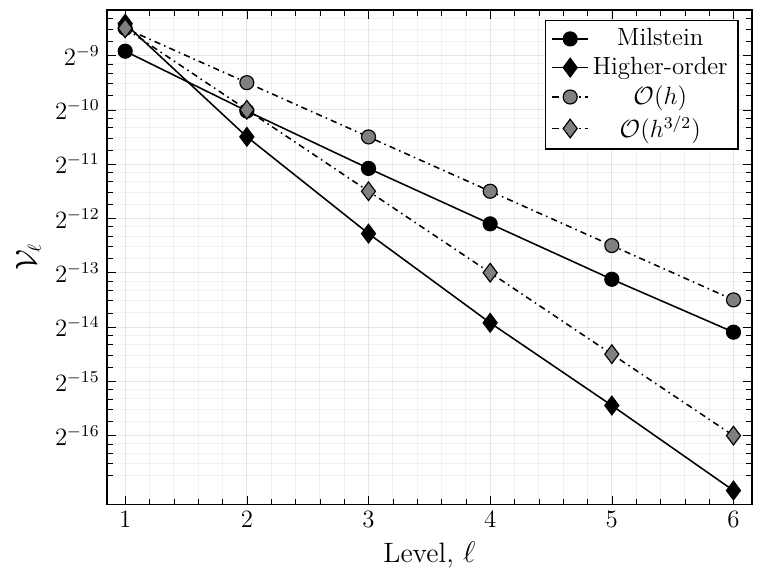}
        \includegraphics[width=0.49\linewidth]{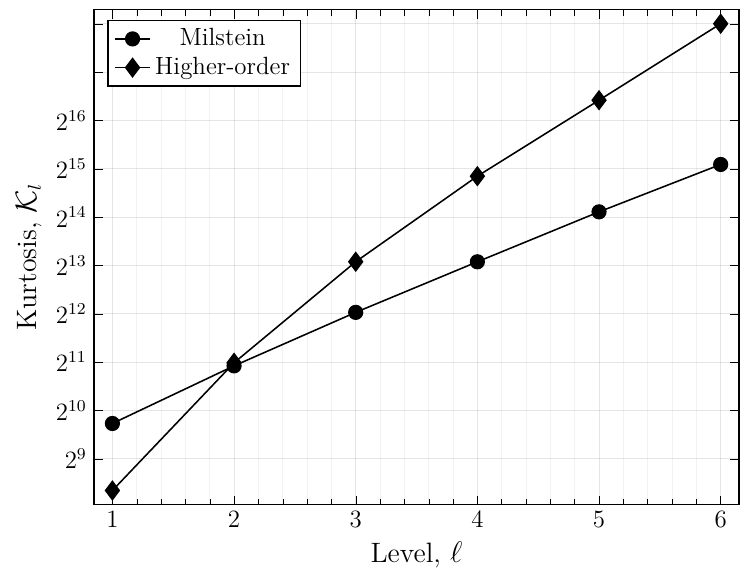}
        \includegraphics[width=0.49\linewidth]{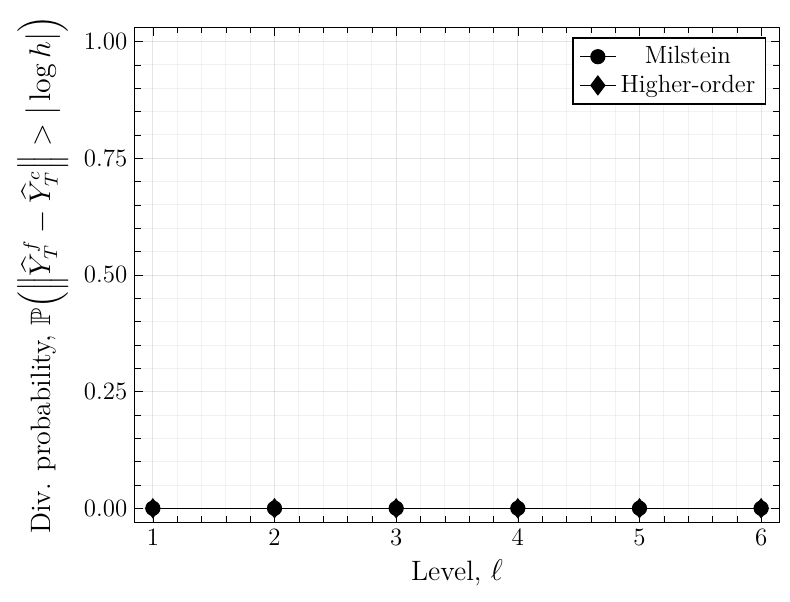}
        \caption{Numerical results for the problem considered in Section~\ref{subsec:1d_triplewell} at the terminal time, $T = 10$: strong error (top left), variance rate (top right), kurtosis (bottom left), divergence probability (bottom right)}
        \label{fig:triple_well_1D_discontinuous}
    \end{figure}

    The numerical experiment matches the theoretical results as shown in Figure~\ref{fig:triple_well_1D_discontinuous_mlmc}. For the choice of $h_{0} = \cO ( T^{-1/(3/2 - \xi)} )$, for a discontinuous QoI, and using the higher-order change-of-measure scheme, we can achieve a computational cost of $\cO \big( \abs{\log \epsilon}^{5/3 + \xi_{1}} \big)$, for all $\xi_{1} > 0$, while still achieving an MSE of $\cO ( \epsilon^{2} )$. As for the Milstein scheme developed in \cite{fang2019multilevel}, we observe that to achieve an MSE of $\cO ( \epsilon^{2} )$, the computational cost is $\cO ( \epsilon^{-\xi_{2}} \abs{\log \epsilon}^{2 + \xi_{2} / 2} )$ for the choice of $h_{0} = \cO ( 1 / \sqrt{T \log T} )$ and all $\xi_{2} > 0$. Although we use a larger initial time-step size for the Milstein scheme, the associated computational cost is lower for the higher-order scheme compared to the Milstein scheme.

    \begin{figure}[ht]
        \centering
        \includegraphics[width=0.49\linewidth]{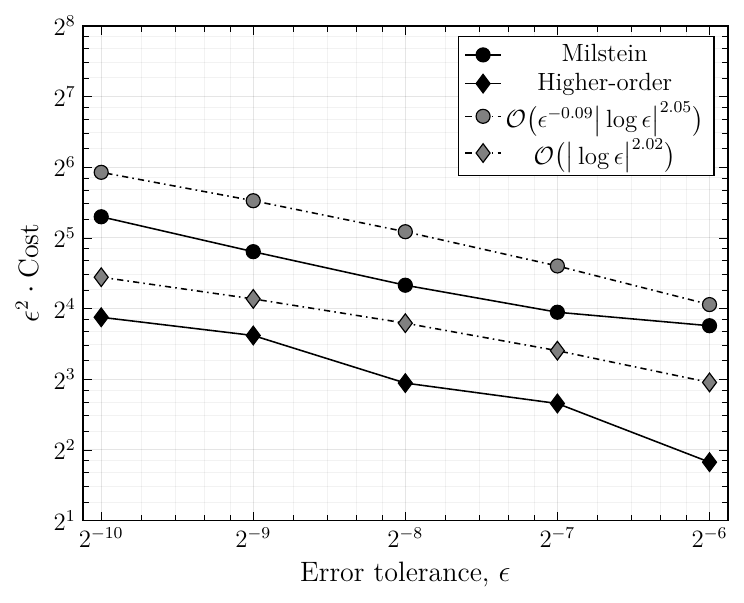}
        \includegraphics[width=0.49\linewidth]{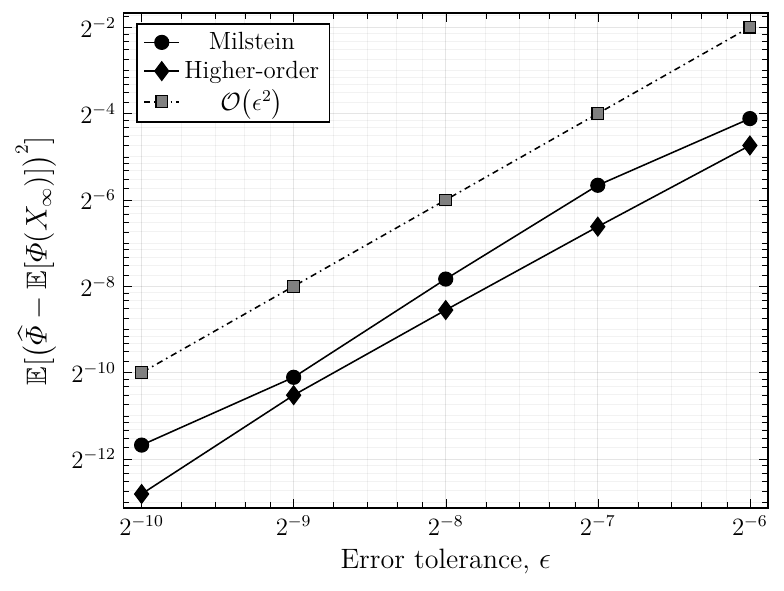}
        \includegraphics[width=0.49\linewidth]{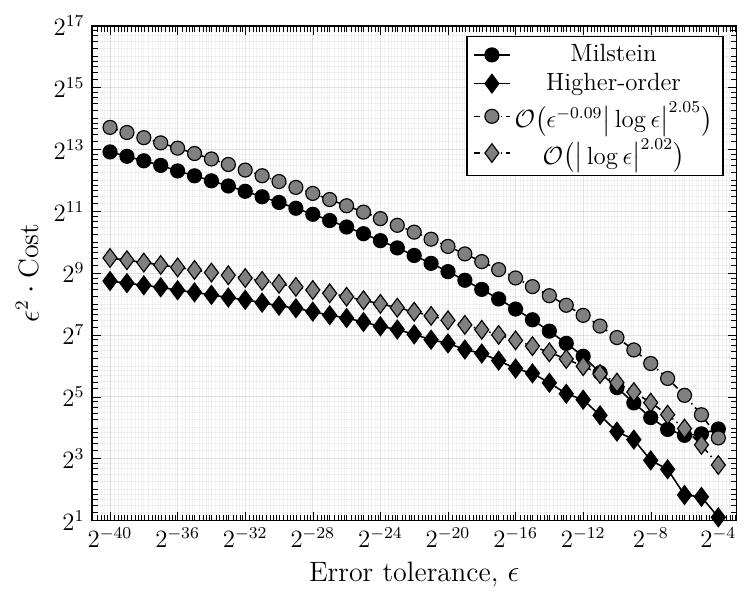}
        \caption{Change-of-measure MLMC scheme results for the problem considered in Section~\ref{subsec:1d_triplewell}: computational cost (top left), mean squared error (top right), asymptotic computational cost (bottom)}
        \label{fig:triple_well_1D_discontinuous_mlmc}
    \end{figure}
    
    \subsection{2D potential well -- Discontinuous payoff}
    \label{subsec:doublewell_2d_mixing}
    We will now consider the SDE
    \begin{equation*}
        \begin{split}
            \di X_{1} &= \Bigg( X_{1} \bigg( \frac{4}{(1 + X_{1}^{2})^{2}} - 2 \bigg) + \frac{1}{2} \sech^{2}(X_{1}) \tanh(X_{2}) \Bigg) \di t + \di W^{1}\\
            \di X_{2} &= \Bigg( X_{2} \bigg( \frac{4}{(1 + X_{2}^{2})^{2}} - 2 \bigg) + \frac{1}{2} \sech^{2}(X_{2}) \tanh(X_{1}) \Bigg) \di t + \di W^{2} \\
            X(0) &= (0, 0)^{\top}
        \end{split}
    \end{equation*}
    corresponding to the potential function
    \[ f(X_{1}, X_{2}) \coloneqq \frac{X_{1}^{4} + 1}{X_{1}^{2} + 1} + \frac{X_{2}^{4} + 1}{X_{2}^{2} + 1} - \frac{1}{2} \tanh(X_{1}) \tanh(X_{2}) - 2 \]
    that models the dynamics of a particle under the influence of a potential well. The drift coefficient satisfies Assumptions~\ref{asmp:lipschitz_a}, \ref{asmp:aD_lipschitz}, and \ref{asmp:dissipativity_a}. The drift coefficient exhibits non-linearity close to the origin. We are interested in computing the weak approximation of the payoff function
    \[ \varPhi(X_{1}, X_{2}) \coloneqq \begin{cases}
        1, & \text{if } 0 \leq X_{1} + X_{2} \leq 1.4 \text{ and } -0.75 \leq X_{2} - X_{1} \leq 0.75 \\
        0, & \text{otherwise}
    \end{cases}, \]
    w.r.t. the invariant measure of the SDE. The pseudo-reference solution is $\E [ \varPhi(X_{\infty}) ] = 0.1674$, rounded to 4 significant digits. We run the simulation until the terminal time $T = 10$, using an initial time-step size $h_{0} = 2^{-4}$, and $M = 10^{7}$ Monte Carlo samples.

    \begin{figure}[ht]
        \centering
        \includegraphics[width=0.5\linewidth]{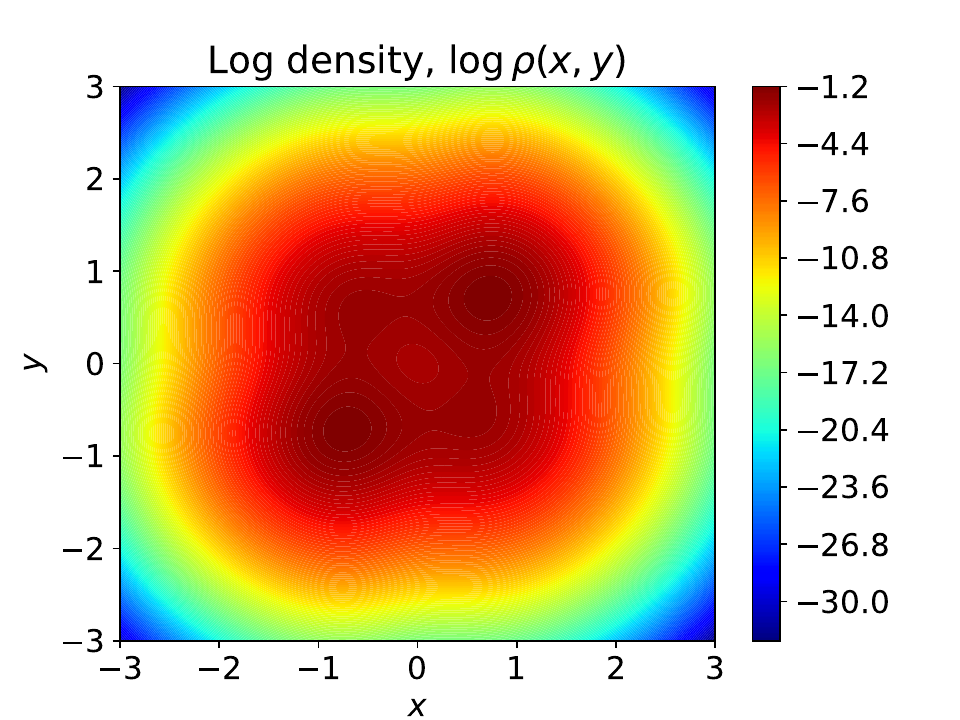}
        \includegraphics[width=0.45\linewidth]{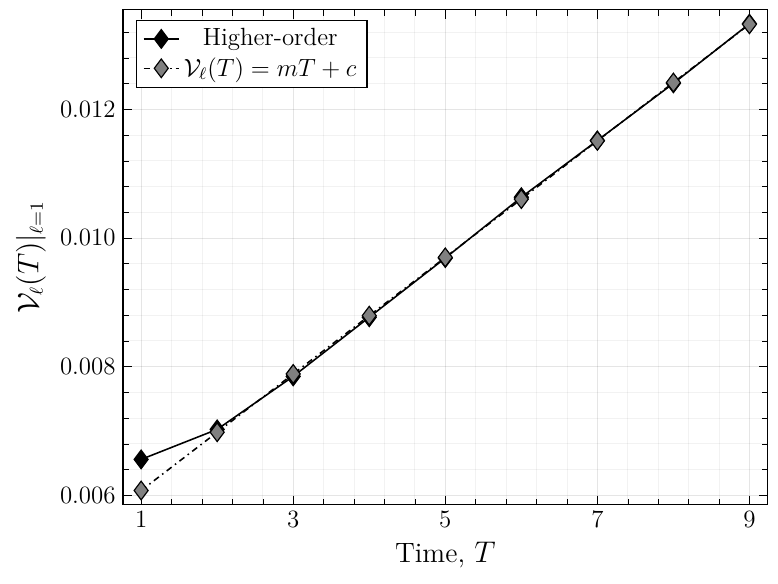}
        \caption{The logarithm of the probability density function of the SDE considered in Section~\ref{subsec:doublewell_2d_mixing}.}
        \label{fig:doublewell_2d_ergodic_rate}
    \end{figure}

    Figure~\ref{fig:doublewell_2d_ergodic_rate} depicts the logarithmic probability density of the 2D potential well and shows that the variance of the MLMC estimator increases linearly with time $T$ for a fixed time-step size $h$, as shown in Theorem~\ref{thm:convergence_estimator_discont}. Figure~\ref{fig:doublewell_2D_discontinuous} shows that the convergence rates obtained from the numerical simulation are consistent with theory. We observe that both the strong error and the variance rate for the higher-order change-of-measure scheme are $\cO(h^{3/2})$ as proved in Theorem~\ref{thm:convergence_estimator_discont}. From Figure~\ref{fig:triple_well_1D_discontinuous}, we also observe that the probability of the fine and the coarse trajectories diverging $\Prob (\Theta_{T}) = 0$ for $\nu_{1} = 1$. We observe that the kurtosis increases with the level of the multilevel estimator. This can be attributed to computing the weak approximation of a discontinuous payoff function.

    \begin{figure}[ht]
        \centering
        \includegraphics[width=0.49\linewidth]{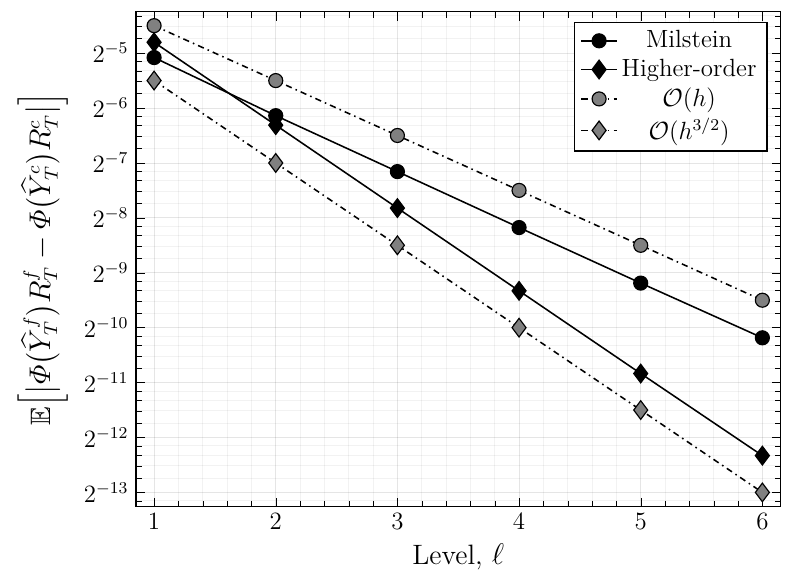}
        \includegraphics[width=0.49\linewidth]{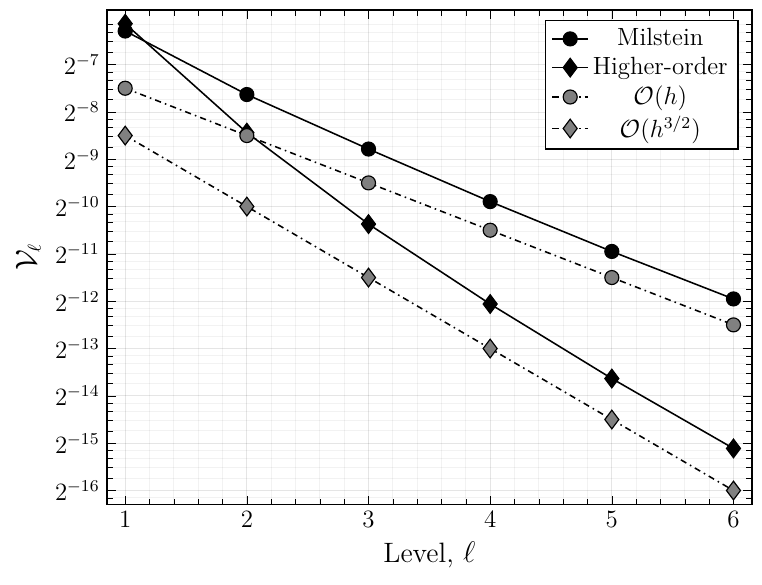}
        \includegraphics[width=0.49\linewidth]{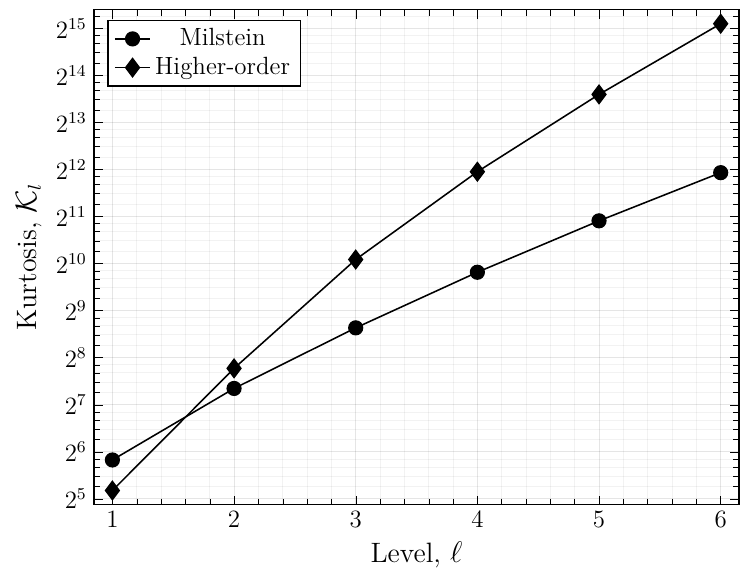}
        \includegraphics[width=0.49\linewidth]{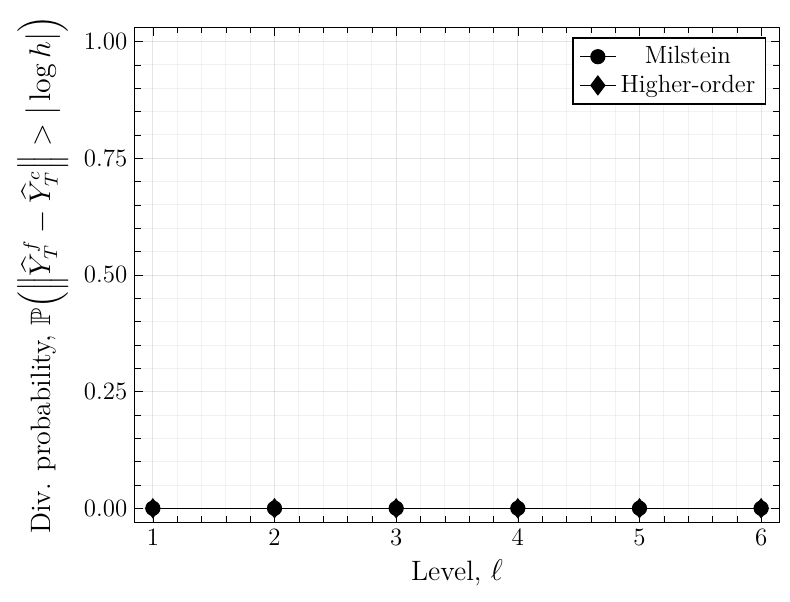}
        \caption{Numerical results for the problem considered in Section~\ref{subsec:doublewell_2d_mixing} at the terminal time, $T = 10$: strong error (top left), variance rate (top right), kurtosis (bottom left), divergence probability (bottom right)}
        \label{fig:doublewell_2D_discontinuous}
    \end{figure}

    From Figure~\ref{fig:doublewell_2D_discontinuous_mlmc}, it is observed that the numerical results match the theoretical results. By choosing $h_{0} = \mathcal{O} ( 1 / \sqrt{T \log T} )$ and $h_{0} = \mathcal{O} \big( T ^ {-1 / (3/2 - \xi_{1})} \big)$ for the Milstein and the higher-order schemes, respectively, for all $\xi_{1}, \xi_{2} > 0$, it is noted that the numerical results match the theoretical results. Furthermore, the computational cost of the higher-order scheme is lower than the Milstein scheme in terms of both the rate and the associated constant. We also note that the results extend to 2D SDE as shown in theory.

    \begin{figure}[ht]
        \centering
        \includegraphics[width=0.49\linewidth]{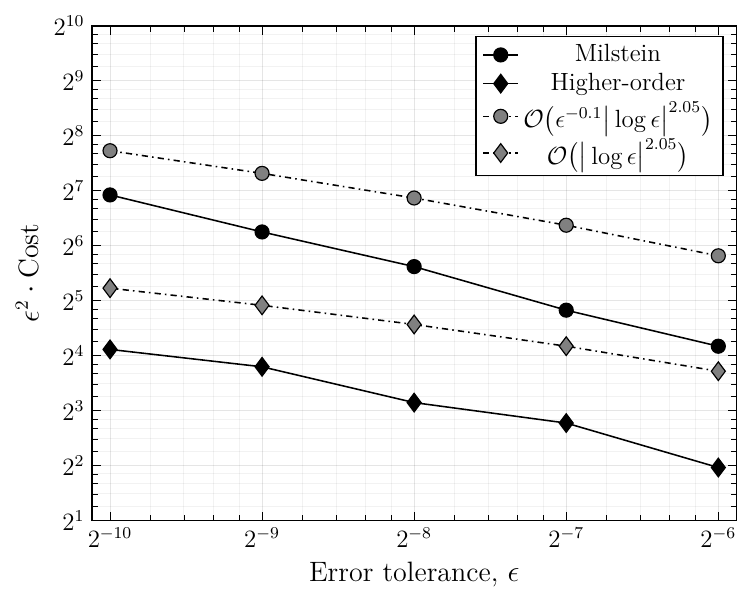}
        \includegraphics[width=0.49\linewidth]{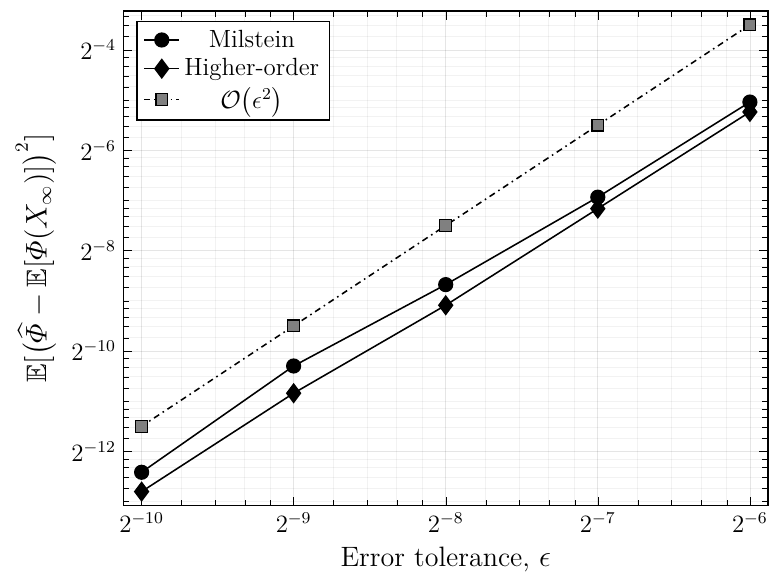}
        \includegraphics[width=0.49\linewidth]{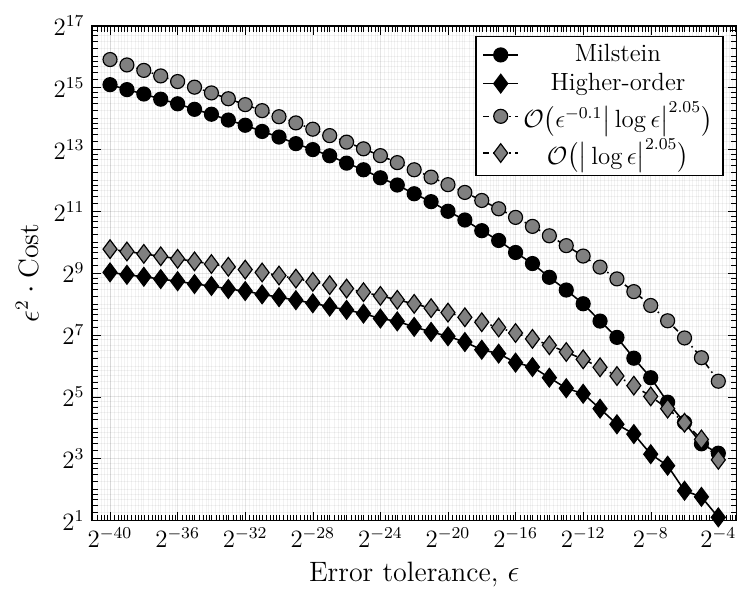}
        \caption{Change-of-measure MLMC scheme results for the problem considered in Section~\ref{subsec:doublewell_2d_mixing}: computational cost (top left), mean squared error (top right), asymptotic computational cost (bottom)}
        \label{fig:doublewell_2D_discontinuous_mlmc}
    \end{figure}

    \subsection{Thomas' cyclically symmetric attractor -- Lipschitz payoff}
    \label{subsec:3d_thomas_attractor_lipschitz}
    In \cite{thomas1999deterministic}, a cyclically symmetric attractor system is formulated, which like the Lorenz system, is a chaotic attractor system. To this end, we consider the SDE
    \begin{equation*}
        \begin{split}
            \di X_{1} &= \big( \sin(X_{2}) - 0.18 X_{1} \big) \di t + \di W^{1} \\
            \di X_{2} &= \big( \sin(X_{3}) - 0.18 X_{2} \big) \di t + \di W^{2} \\
            \di X_{3} &= \big( \sin(X_{1}) - 0.18 X_{3} \big) \di t + \di W^{3} \\
            X(0) &= (1, 2, 2)^{\top}.
        \end{split}
    \end{equation*}
    Note that the drift term given above satisfies Assumptions~\ref{asmp:lipschitz_a}, \ref{asmp:dissipativity_a}, and \ref{asmp:aD_lipschitz}. We are interested in computing the weak approximation of the uniformly Lipschitz continuous payoff function $\varPhi(x) \coloneqq \norm{x}$ w.r.t. the invariant measure of the SDE. The pseudo-reference solution is $\E [ \varPhi(X_{\infty}) ] = 3.9664$, rounded to 5 significant digits. We run $M = 10^{6}$ Monte Carlo simulations until the terminal time $T = 40$ using an initial time-step size $h_{0} = 2^{-4}$.

    \begin{figure}[ht]
        \centering
        \includegraphics[width=0.45\linewidth]{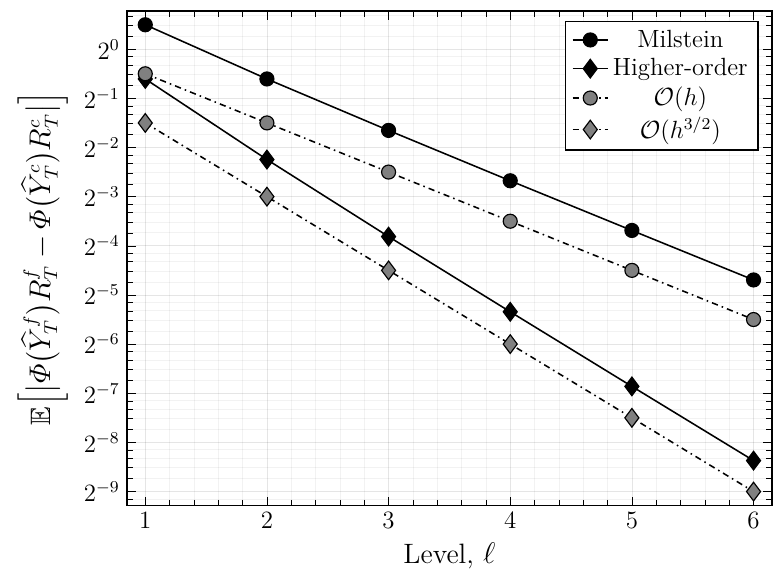}
        \includegraphics[width=0.45\linewidth]{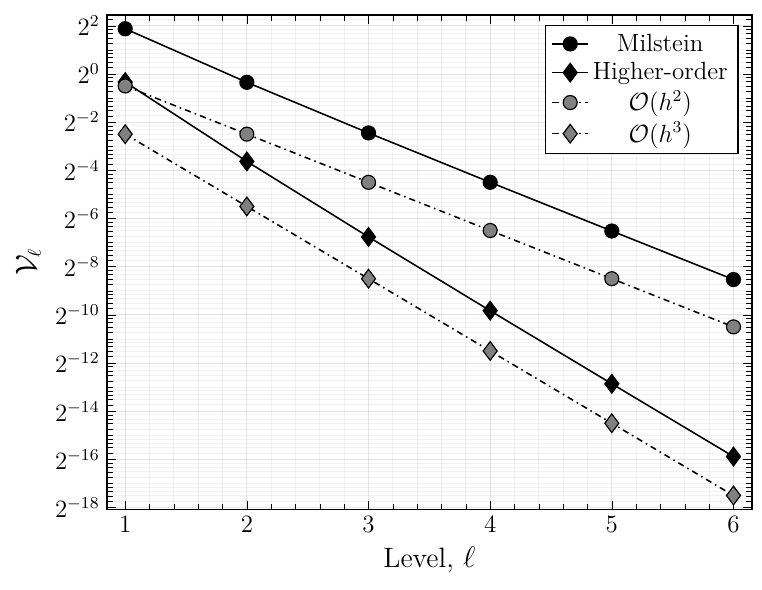}
        \includegraphics[width=0.45\linewidth]{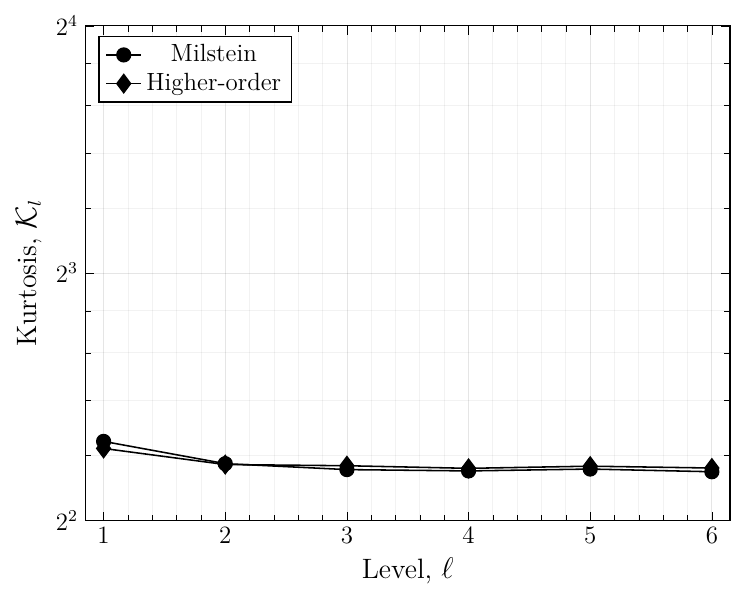}
        \includegraphics[width=0.45\linewidth]{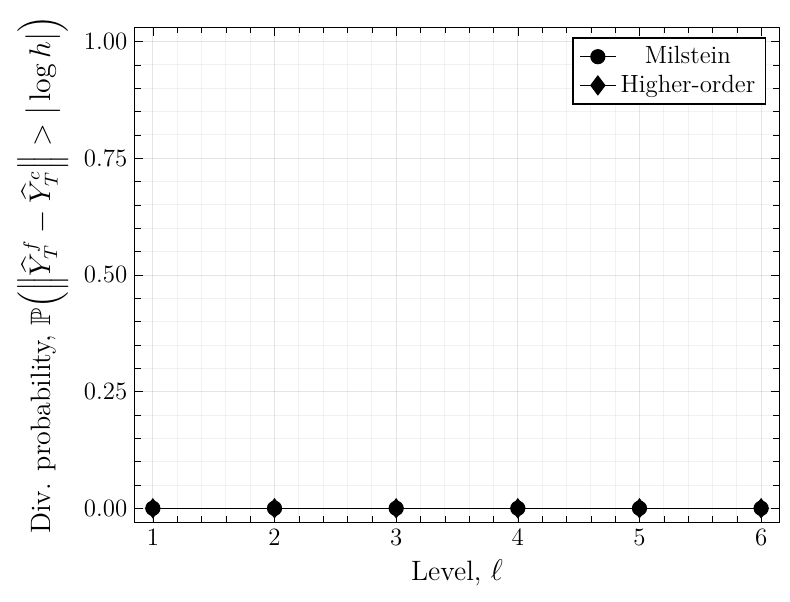}
        \caption{Numerical results for the problem considered in Section~\ref{subsec:3d_thomas_attractor_lipschitz} at the terminal time, $T = 40$: Strong error (top left), Variance rate (top right), Kurtosis (bottom left), and the expected total computational cost (bottom right)}
        \label{fig:thomas_attractor}
    \end{figure}

    From Figure~\ref{fig:thomas_attractor}, we observe that the numerical rates are consistent with the theoretical rates for a uniformly Lipschitz continuous payoff function. Unlike for the case of a discontinuous payoff, we observe that the kurtosis of both the numerical schemes is bounded when considering a uniformly Lipschitz continuous payoff. This leads to a better estimation of the sample variance on all levels of the multilevel estimator. We also observe the probability that the fine and the coarse trajectories diverge $\Prob ( \Theta_{T} ) = 0$ for $\nu_{1} = 1$.

    \begin{figure}[ht]
        \centering
        \includegraphics[width=0.48\linewidth]{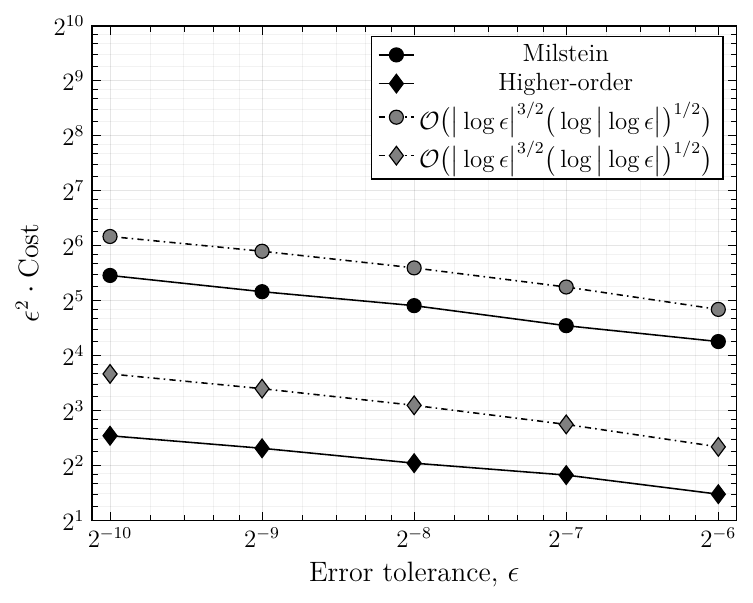}
        \includegraphics[width=0.49\linewidth]{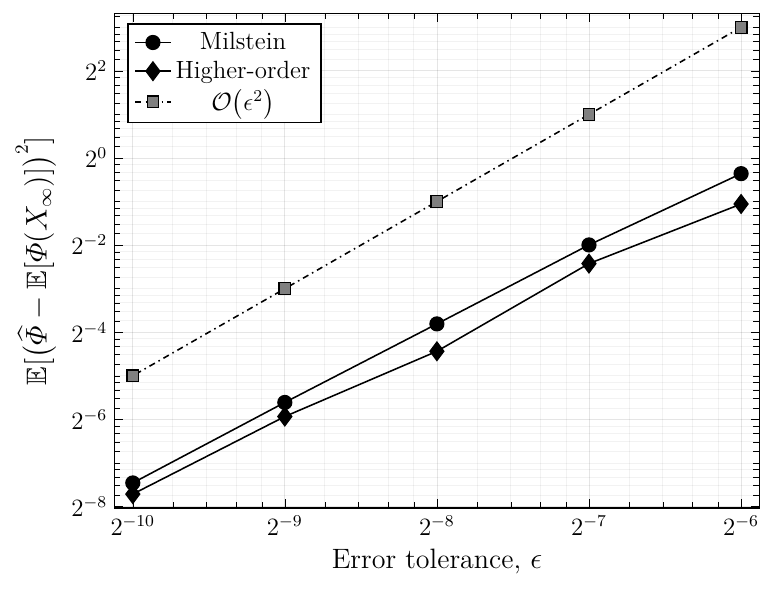}
        \includegraphics[width=0.49\linewidth]{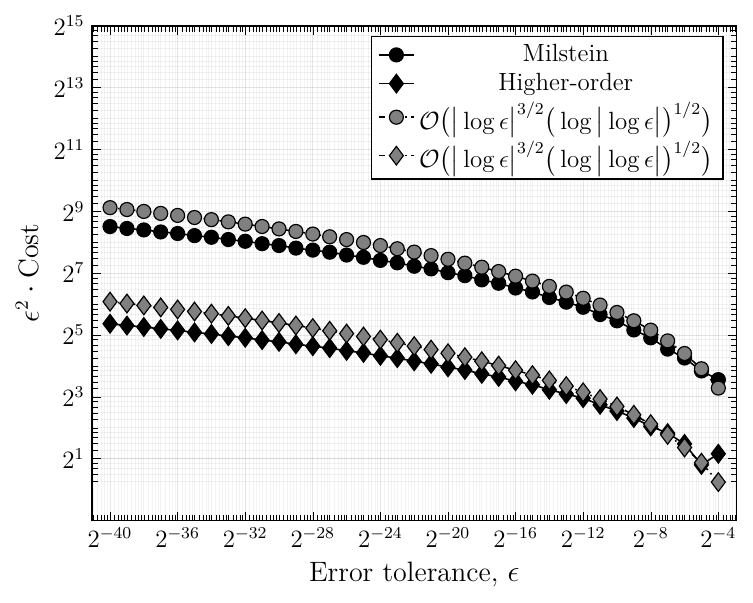}
        \includegraphics[width=0.49\linewidth]{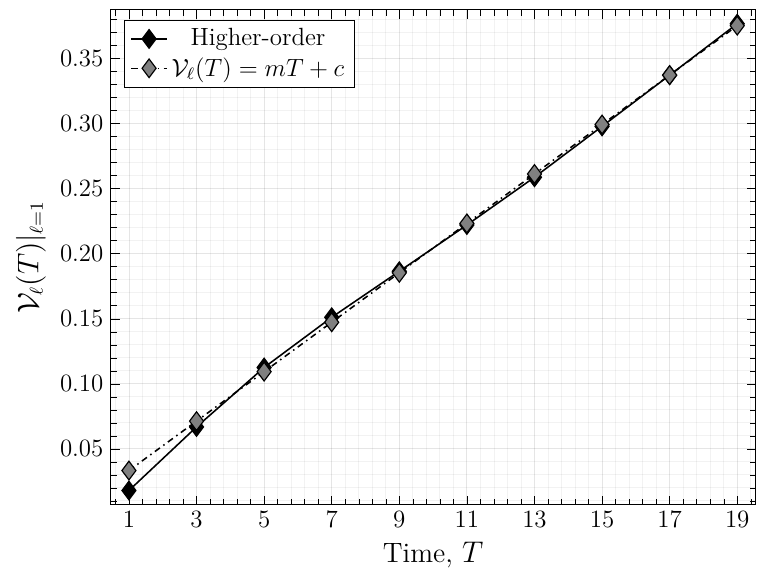}
        \caption{Change-of-measure MLMC scheme results for the problem considered in Section~\ref{subsec:3d_thomas_attractor_lipschitz}: computational cost (top left), mean squared error (top right), asymptotic computational cost (bottom left), variance growth rate of the MLMC estimator w.r.t terminal time on level $\ell = 1$ (bottom right).}
        \label{fig:thomas_attractor_mlmc}
    \end{figure}
    
    From Figure~\ref{fig:thomas_attractor_mlmc} we observe that for $h_{0} = 2^{-4}$, the variance grows linearly w.r.t. the terminal time $T$ which is consistent with the theoretical results in Theorem~\ref{thm:convergence_estimator}. The numerical experiments show that the computational cost of both the higher-order and the Milstein schemes are bounded from above by $\cO \big( \epsilon^{-2} \abs{\log \epsilon}^{3/2} (\log \abs{\log \epsilon})^{1/2} \big)$. This is because the majority of the computational cost is attributed to generating samples on level zero of the MLMC estimator for both numerical schemes. Although both numerical schemes have the same computational cost order, the computational cost constant associated with the higher-order numerical scheme is lower than the Milstein scheme to achieve an MSE of $\cO (\epsilon^{2})$. 
    
    \section{Conclusion and future scope}
    \label{sec:conclusion}
    In this paper, we developed a tractable higher-order numerical scheme to compute weak approximations of the invariant measure of an SDE using the MLMC method, that does not satisfy the contractivity condition. The fundamental idea is similar to one in \cite{fang2019multilevel}, where we introduce a spring term in the 1.5 strong It\^o--Taylor scheme that pulls the fine and the coarse trajectories of the multilevel estimator close to each other. This ensures that the fine and the coarse trajectories do not drift far away from each other while recovering the contractivity property of the SDE needed for the efficient implementation of MLMC method. We then use Girsanov's transformation to compute the weak approximations w.r.t. the new measure.

    We theoretically proved that the fine and the coarse trajectories of the multilevel estimator are $L^{p}(\Omega)$ stable. We proved that the pairwise coupled difference between the fine and the coarse trajectories of the multilevel estimator has $L^{p}(\Omega)$ convergence rate $\cO \big( \min ( p^{1/2} h^{1/2}, p h, p^{3/2} h^{3/2} ) \big)$ for all $p \geq 1$, and for all sufficiently small $h > 0$. We proved the $L^{p}(\Omega)$-norm convergence of the multilevel estimator to be $\cO \big( T^{1/2} p^{5/2} h^{3/2} \big)$ for a uniformly Lipschitz continuous payoff function. For a discontinuous payoff function defined by equation~\ref{eq:indicator function}, the $p$-th moment of the multilevel estimator was proved to be $\cO \Big( T^{p/2} p^{2p} h^{\frac{3}{2} - \xi} \Big)$ for all $p \geq 1$ and for all $\xi > 0$. Provided that the SDE under consideration satisfies Assumptions~\ref{asmp:lipschitz_a}, \ref{asmp:aD_lipschitz}, and \ref{asmp:dissipativity_a}, we proved that the computational cost involved in the implementation of the higher-order MLMC scheme is $\cO \big( \epsilon^{-2} \abs{\log \epsilon}^{3/2} (\log \abs{\log \epsilon})^{1/2} \big)$, and $\cO \big( \epsilon^{-2} \abs{\log \epsilon}^{5/3 + \xi} \big)$, where $\xi > 0$, for all uniformly Lipschitz continuous payoffs, and all discontinuous payoffs of the form given by equation~\eqref{eq:indicator function}, respectively.

    There are several ways in which the current work can be extended. Firstly, we can extend the analysis of the higher-order change-of-measure scheme to SDE with state-dependent diffusion coefficients. Another possible extension would be to use adaptive time-stepping based on the idea devised in \cite{wang2016_EM1, wang2017_EM2, wang2018}. By choosing time-steps adapted to the state of the process, and using the new higher-order change-of-measure scheme, we can extend the computation of weak approximations of invariant measure of SDE with non-uniformly Lipschitz continuous drift coefficients. It is important to note that the coupling between the coarse and the fine trajectories and the computation of the corresponding Radon--Nikodym derivatives for the higher-order change-of-measure scheme are not straightforward unlike the Milstein scheme due to the higher-order noise terms. Further analysis is needed to develop an optimal coupling and to compute the corresponding Radon--Nikodym derivatives. This poses an interesting problem for future research.
    
    The authors of \cite{hakon2023} construct a higher-order adaptive method for strong approximations of exit times of Itô stochastic differential equations (SDE). The fundamental idea is to use a smaller time-step size as the numerical solution gets closer to the boundary of the domain, and to use higher-order integration schemes for better approximation of the state of the Itô-diffusion. The authors prove that by using a higher-order order numerical integrator along with adaptive time-steps leads to a better strong error convergence rate of exit-time computations. Combining the ideas of adaptive timestepping for exit-time problems with the higher-order change-of-measure scheme may help in computing weak approximations of quantities of interest (QoI) that depend on the exit-time and the state of the process. This makes an interesting problem for future research.

    \section{Theoretical proofs}
    \label{sec:theoretical_proofs}

    \subsubsection*{Proof of Theorem~\ref{thm:stability}}
        We adopt a methodology similar to one in \cite{fang2019multilevel} to prove the stability of the numerical trajectories with the spring constant. To keep the notation concise, we denote the tuple of random variables $(\Delta W_{n}^{\PMeas}, \Delta Z_{n}^{\PMeas}) = (\Delta W_{n}, \Delta Z_{n})$ for all $n = 1, \ldots, N - 1$. We introduce the constants $C, \tildealpha, \tildebeta > 0$ that depend only on $S, d$, and the constants from Assumptions~\ref{asmp:lipschitz_a}, \ref{asmp:aD_lipschitz}, and \ref{asmp:dissipativity_a}. The constants $C, \tildealpha, \tildebeta$ may change from line to line in the proof argument.
        
        Computing the square of the Euclidean norm of the fine and the coarse trajectories at the odd time-step, we obtain
        \begin{equation*}
            \begin{split}
                \norm{\hatY_{2n + 1}^{f}}^{2} &= \norm{Sh \hatY_{2n}^{c} + (1 - Sh) \bigg\{ \hatY_{2n}^{f} + \frac{A_{2n}^{f}}{1 - Sh} \bigg\}}^{2}, \\
                \norm{\hatY_{2n + 1}^{c}}^{2} &= \norm{Sh \hatY_{2n}^{f} + (1 - Sh) \bigg\{ \hatY_{2n}^{c} + \frac{A_{2n}^{c}}{1 - Sh} \bigg\}}^{2},
            \end{split}
        \end{equation*}
        where \begin{equation*}
            \begin{split}
                A_{2n}^{f} \coloneqq h a(\hatY_{2n}^{f}) + \Delta W_{2n} + Da(\hatY_{2n}^{f}) \Delta Z_{2n} + \frac{h^{2}}{2} \cA a(\hatY_{2n}^{f}), \\
                A_{2n}^{c} \coloneqq h a(\hatY_{2n}^{c}) + \Delta W_{2n} + Da(\hatY_{2n}^{c}) \Delta Z_{2n} + \frac{h^{2}}{2} \cA a(\hatY_{2n}^{c}).
            \end{split}
        \end{equation*}
        Note that for the given spring constant $S > 0$, and for all sufficiently small $h > 0$, \[ \frac{1}{1 - Sh} \leq 2. \]
        Since the Euclidean norm squared is a convex function, from Jensen's inquality, we have
        \begin{align*}
            \norm{\theta x_{1} + (1 - \theta) x_{2}}^{2} \leq \theta \norm{x_{1}}^{2} + (1 - \theta) \norm{x_{2}}^{2}, \qquad \forall x_{1}, x_{2} \in \bR^{d}, \;\; \theta \in (0, 1).
        \end{align*}
        Using this inequality, we obtain
        \begin{equation}
            \label{eq:fine_even_norm_squared}
            \begin{split}
                \norm{\hatY_{2n + 1}^{f}}^{2} &\leq Sh \norm{\hatY_{2n}^{c}}^{2} + (1 - Sh) \norm{ \hatY_{2n}^{f} + \frac{A_{2n}^{f}}{1 - Sh} }^{2} \\
                &\leq Sh \norm{\hatY_{2n}^{c}}^{2} + (1 - Sh) \norm{\hatY_{2n}^{f}}^{2} + 2 \norm{A_{2n}^{f}}^{2} + 2 \inner{\hatY_{2n}^{f}, A_{2n}^{f}}.
            \end{split}
        \end{equation}
        Let $b_{i} \in \bR^{d}$ for $i = 1, 2, \ldots, m$. From H\"older's inequality, we then have
        \begin{equation*}
            \norm{\sum_{i = 1}^{m} b_{i}}^{p} \leq m^{p - 1} \Biggl( \sum_{i = 1}^{m} \norm{b_{i}}^{p} \Biggr), \qquad \forall p \geq 1.
        \end{equation*}
        Using the above inequality, we can bound $\norm{A_{2n}^{f}}^{2}$ as
        \begin{equation}
            \label{eq:norm_A2nf}
            \begin{split}
                \norm{A_{2n}^{f}}^{2} &\leq 4 \bigg\{ h^{2} \norm{a(\hatY_{2n}^{f})}^{2} + \norm{\Delta W_{2n}}^{2} + \norm{Da(\hatY_{2n}^{f}) \Delta Z_{2n}}^{2} + \frac{h^{4}}{4} \norm{\cA a(\hatY_{2n}^{f})}^{2} \bigg\} \\
                ~
                &\leq C \bigg\{ h^{2} \Big( 1 + \norm{\hatY_{2n}^{f}}^{2} \Big) + \norm{\Delta W_{2n}}^{2} + \norm{\Delta Z_{2n}}^{2} \bigg\}.
            \end{split}
        \end{equation}
        Using Young's inequality, Cauchy-Schwarz inequality, and Assumption~\ref{asmp:dissipativity_a}, the inner product term $\inner{\hatY_{2n}^{f}, A_{2n}^{f}}$ can be bounded as
        \begin{equation}
            \label{eq:inner_A2nf}
            \begin{split}
                \inner{\hatY_{2n}^{f}, A_{2n}^{f}} &= h \inner{\hatY_{2n}^{f}, a(\hatY_{2n}^{f})} + \inner{\hatY_{2n}^{f}, \Delta W_{2n}} + \inner{\hatY_{2n}^{f}, Da(\hatY_{2n}^{f}) \Delta Z_{2n}} \\
                &\quad + \frac{h^{2}}{2} \inner{\hatY_{2n}^{f}, \cA a(\hatY_{2n}^{f})} \\
                ~
                &\leq -\tildealpha h \norm{\hatY_{2n}^{f}}^{2} + \inner{\hatY_{2n}^{f}, \Delta W_{2n}} + \inner{\hatY_{2n}^{f}, Da(\hatY_{2n}^{f}) \Delta Z_{2n}} + \tildebeta h.
            \end{split}
        \end{equation}
        Combining equations~\eqref{eq:norm_A2nf} and \eqref{eq:inner_A2nf} with \eqref{eq:fine_even_norm_squared}, we obtain
        \begin{equation}
            \label{eq:fine_odd_bound}
            \begin{split}
                \norm{\hatY_{2n + 1}^{f}}^{2} &\leq Sh \norm{\hatY_{2n}^{c}}^{2} + (1 - Sh - 2 \tildealpha h) \norm{\hatY_{2n}^{f}}^{2} + C \Big\{ \norm{\Delta W_{2n}}^{2} + \norm{\Delta Z_{2n}}^{2} \Big\} \\
                &\quad + 2 \inner{\hatY_{2n}^{f}, \Delta W_{2n}} + 2 \inner{\hatY_{2n}^{f}, Da(\hatY_{2n}^{f}) \Delta Z_{2n}} + 2 \tildebeta h.
            \end{split}
        \end{equation}
        Similarly, we have the following upper bound for the coarse path at the odd time-step
        \begin{equation}
            \label{eq:coarse_odd_bound}
            \begin{split}
                \norm{\hatY_{2n + 1}^{c}}^{2} &\leq Sh \norm{\hatY_{2n}^{f}}^{2} + (1 - Sh - 2 \tildealpha h) \norm{\hatY_{2n}^{c}}^{2} + C \Big\{ \norm{\Delta W_{2n}}^{2} + \norm{\Delta Z_{2n}}^{2} \Big\} \\
                &\quad + 2 \inner{\hatY_{2n}^{c}, \Delta W_{2n}} + 2 \inner{\hatY_{2n}^{c}, Da(\hatY_{2n}^{c}) \Delta Z_{2n}} + 2 \tildebeta h.
            \end{split}
        \end{equation}
        Using the similar approach as above, we can write the square of the Euclidean norm for the fine and the coarse paths at the even time-step as
        \begin{equation*}
            \begin{split}
                \norm{\hatY_{2n + 2}^{f}}^{2} &= \norm{Sh \hatY_{2n + 1}^{c} + (1 - Sh) \bigg\{ \hatY_{2n + 1}^{f} + \frac{A_{2n + 1}^{f}}{1 - Sh} \bigg\}}^{2}, \\
                ~
                \norm{\hatY_{2n + 2}^{c}}^{2} &= \norm{2Sh \hatY_{2n}^{f} + (1 - 2Sh) \bigg\{ \hatY_{2n}^{c} + \frac{A_{2n + 1}^{c}}{1 - 2Sh} \bigg\}}^{2},
            \end{split}
        \end{equation*}
        where
        \begin{equation*}
            \begin{split}
                A_{2n + 1}^{f} &\coloneqq h a(\hatY_{2n + 1}^{f}) + \Delta W_{2n + 1} + Da(\hatY_{2n + 1}^{f}) \Delta Z_{2n + 1} + \frac{h^{2}}{2} \cA a(\hatY_{2n + 1}^{f}), \\
                ~
                A_{2n + 1}^{c} &\coloneqq 2h a(\hatY_{2n}^{c}) + (\Delta W_{2n} + \Delta W_{2n + 1}) \\
                &\quad + Da(\hatY_{2n}^{c}) (\Delta Z_{2n + 1} + \Delta Z_{2n} + h \Delta W_{2n}) + 2 h^{2} \cA a(\hatY_{2n}^{c}).
            \end{split}
        \end{equation*}
        Using Young's inequality, Cauchy-Schwarz inequality, and Assumption~\ref{asmp:dissipativity_a}, we obtain
        \begin{align}
            \label{eq:fine_even_bound_1}
            \begin{split}
                \norm{\hatY_{2n + 2}^{f}}^{2} &\leq Sh \norm{\hatY_{2n + 1}^{c}}^{2} + (1 - Sh - 2 \tildealpha h) \norm{\hatY_{2n + 1}^{f}}^{2} \\
                &\quad + C \Big\{ \norm{\Delta W_{2n + 1}}^{2} + \norm{\Delta Z_{2n + 1}}^{2} \Big\} + 2 \inner{\hatY_{2n + 1}^{f}, \Delta W_{2n + 1}} \\
                &\quad + 2 \inner{\hatY_{2n + 1}^{f}, Da(\hatY_{2n + 1}^{f}) \Delta Z_{2n + 1}} + 2 \tildebeta h, \\
            \end{split} \\
            \label{eq:coarse_even_bound}
            \begin{split}
                \norm{\hatY_{2n + 2}^{c}}^{2} &\leq 2Sh \norm{\hatY_{2n}^{f}}^{2} + (1 - 2Sh - 4 \tildealpha h) \norm{\hatY_{2n}^{c}}^{2} + C \Big\{ \norm{\Delta Z_{2n + 1}}^{2} + \norm{\Delta Z_{2n}}^{2} \\
                &\quad + \norm{\Delta W_{2n + 1}}^{2} + \norm{\Delta W_{2n}}^{2} \Big\} + 2 \inner{\hatY_{2n}^{c}, \Delta W_{2n}} + 2 \inner{\hatY_{2n}^{c}, \Delta W_{2n + 1}} \\
                &\quad + 2 \inner{\hatY_{2n}^{c}, Da(\hatY_{2n}^{c}) \Delta Z_{2n + 1}} + 2 \inner{\hatY_{2n}^{c}, Da(\hatY_{2n}^{c}) \Delta Z_{2n}} + 4 \tildebeta h.
            \end{split}
        \end{align}
        Substituting equations~\eqref{eq:fine_odd_bound} and \eqref{eq:coarse_odd_bound} into \eqref{eq:fine_even_bound_1} and simplifying the inequality, we obtain
        \begin{equation}
            \label{eq:fine_even_bound}
            \begin{split}
                \norm{\hatY_{2n + 2}^{f}}^{2} &\leq 2Sh (1 - Sh - 2 \tildealpha h) \norm{\hatY_{2n}^{c}}^{2} + (1 - 2Sh - 4 \tildealpha h + C h^{2}) \norm{\hatY_{2n}^{f}}^{2} + C \Big\{ \norm{\Delta W_{2n}}^{2} \\
                &\quad + \norm{\Delta Z_{2n}}^{2} + \norm{\Delta W_{2n + 1}}^{2} + \norm{\Delta Z_{2n + 1}}^{2} \Big\} \\
                &\quad + 2 \inner{Sh \hatY_{2n}^{c} + (1 - Sh - 2 \tildealpha h) \hatY_{2n}^{f}, \Delta W_{2n}} \\
                &\quad + 2 \inner{Sh (Da(\hatY_{2n}^{c}))^{\top} \hatY_{2n}^{c} + (1 - Sh - 2 \tildealpha h) (Da(\hatY_{2n}^{f}))^{\top} \hatY_{2n}^{f}, \Delta Z_{2n}} \\
                &\quad + 2 \inner{\hatY_{2n + 1}^{f}, \Delta W_{2n + 1}} + 2 \inner{(Da(\hatY_{2n + 1}^{f}))^{\top} \hatY_{2n + 1}^{f}, \Delta Z_{2n + 1}} + 4 \tildebeta h - 4 \tildealpha \tildebeta h^{2}.
            \end{split}
        \end{equation}
        Combining equations~\eqref{eq:fine_even_bound} and \eqref{eq:coarse_even_bound}, note that for any $\gamma \in (0, \tildealpha)$, and for all sufficiently small $h > 0$, the following inequality holds
        \begin{equation}
            \label{eq:combine_fc_even_1}
            \begin{split}
                \norm{\hatY_{2n + 2}^{f}}^{2} + \norm{\hatY_{2n + 2}^{c}}^{2} &\leq (1 - 4 \gamma h) \Bigg( \norm{\hatY_{2n}^{f}}^{2} + \norm{\hatY_{2n}^{f}}^{2} \Bigg) + C \Big\{ \norm{\Delta Z_{2n + 1}}^{2} + \norm{\Delta Z_{2n}}^{2} \\
                &\quad + \norm{\Delta W_{2n + 1}}^{2} + \norm{\Delta W_{2n}}^{2} \Big\} \\
                &\quad + 2 \inner{(1 + Sh) \hatY_{2n}^{c} + (1 - Sh - 2 \tildealpha h) \hatY_{2n}^{f}, \Delta W_{2n}} \\
                &\quad + 2 \inner{\hatY_{2n}^{c} + \hatY_{2n + 1}^{f}, \Delta W_{2n + 1}} \\
                &\quad + 2 \inner{(Da(\hatY_{2n}^{c}))^{\top} \hatY_{2n}^{c} + (Da(\hatY_{2n + 1}^{f}))^{\top} \hatY_{2n + 1}^{f}, \Delta Z_{2n + 1}} \\
                &\quad + 2 \inner{(1 + Sh) (Da(\hatY_{2n}^{c}))^{\top} \hatY_{2n}^{c}, \Delta Z_{2n}} \\
                &\quad + 2 \inner{(1 - Sh - 2 \tildealpha h) (Da(\hatY_{2n}^{f}))^{\top} \hatY_{2n}^{f}, \Delta Z_{2n}} + 8 \tildebeta h.
            \end{split}
        \end{equation}
        Note that for all sufficiently small $h > 0, \; (1 - 4 \gamma h) \leq \e^{-4 \gamma h}$, and $\e^{4 \gamma h} \leq 2$. Multiplying both sides of equation~\eqref{eq:combine_fc_even_1} by $\e^{2 \gamma t_{2n + 2}}$, we obtain
        \begin{equation*}
            \begin{split}
                \e^{2 \gamma t_{2n + 2}} \Bigg\{ \norm{\hatY_{2n + 2}^{f}}^{2} + \norm{\hatY_{2n + 2}^{c}}^{2} \Bigg\} &\leq \e^{2 \gamma t_{2n}} \Bigg\{ \norm{\hatY_{2n}^{f}}^{2} + \norm{\hatY_{2n}^{c}}^{2} \Bigg\} + C \e^{2 \gamma t_{2n}} \Bigg\{ \norm{\Delta Z_{2n + 1}}^{2} \\
                &\quad + \norm{\Delta Z_{2n}}^{2} + \norm{\Delta W_{2n + 1}}^{2} + \norm{\Delta W_{2n}}^{2} \Bigg\} \\
                &\quad + 2 \e^{2 \gamma t_{2n}} \inner{\varphi_{1, 2n}, \Delta W_{2n}} + 2 \e^{2 \gamma t_{2n}} \inner{\varphi_{1, 2n + 1}, \Delta W_{2n + 1}} \\
                &\quad + 2 \e^{2 \gamma t_{2n}} \inner{\varphi_{2, 2n}, \Delta Z_{2n}} + 2 \e^{2 \gamma t_{2n}} \inner{\varPhi_{2, 2n + 1}, \Delta Z_{2n + 1}} \\
                &\quad + 16 \tildebeta h \e^{2 \gamma t_{2n}},
            \end{split}
        \end{equation*}
        where
        \begin{equation*}
            \begin{split}
                \varphi_{1, 2n} &\coloneqq \e^{4 \gamma h} \Big\{ (1 + Sh) \hatY_{2n}^{c} + (1 - Sh - 2 \tildealpha h) \hatY_{2n}^{f} \Big\}, \\
                \varphi_{1, 2n + 1} &\coloneqq \e^{4 \gamma h} \Big\{ \hatY_{2n}^{c} + \hatY_{2n + 1}^{f} \Big\}, \\
                \varphi_{2, 2n} &\coloneqq \e^{4 \gamma h} \Big\{ (1 + Sh) (Da(\hatY_{2n}^{c}))^{\top} \hatY_{2n}^{c} + (1 - Sh - 2 \tildealpha h) (Da(\hatY_{2n}^{f}))^{\top} \hatY_{2n}^{f} \Big\}, \\
                \varphi_{2, 2n + 1} &\coloneqq \e^{4 \gamma h} \Big\{ (Da(\hatY_{2n}^{c}))^{\top} \hatY_{2n}^{c} + (Da(\hatY_{2n + 1}^{f}))^{\top} \hatY_{2n + 1}^{f} \Big\}.
            \end{split}
        \end{equation*}
        Summing the above inequality over multiple time-steps, we obtain
        \begin{equation}
            \label{eq:even_combine_sum}
            \begin{split}
                \e^{2 \gamma t_{2n}} \Bigg\{ \norm{\hatY_{2n}^{f}}^{2} + \norm{\hatY_{2n}^{c}}^{2} \Bigg\} &\leq 2 \norm{x_{0}}^{2} + C \Bigg\{ \sum_{k = 0}^{2n - 1} \e^{2 \gamma t_{k}} \norm{\Delta Z_{k}}^{2} + \sum_{k = 0}^{2n - 1} \e^{2 \gamma t_{k}} \norm{\Delta W_{k}}^{2} \Bigg\} \\
                &\quad + 2 \sum_{k = 0}^{2n - 1} \e^{2 \gamma t_{k}} \inner{\varphi_{1, k}, \Delta W_{k}} + 2 \sum_{k = 0}^{2n - 1} \e^{2 \gamma t_{k}} \inner{\varphi_{2, k}, \Delta Z_{k}} \\
                &\quad + \sum_{k = 0}^{n - 1} 16 \tildebeta h \e^{2 \gamma t_{2k}}.
            \end{split}
        \end{equation}
        Note that for all sufficiently small $h > 0 , \; \frac{1}{1 - 2 \tildealpha h} \leq 2$. Combining equations~\eqref{eq:fine_odd_bound}, \eqref{eq:coarse_odd_bound} and using Young's inequality, we have
        \begin{align}
            \notag
            \norm{\hatY_{2n + 1}^{f}}^{2} + \norm{\hatY_{2n + 1}^{c}}^{2} &\leq (1 - 2 \tildealpha h) C \Bigg\{  \norm{\hatY_{2n}^{f}}^{2} + \norm{\hatY_{2n}^{c}}^{2} + \norm{\Delta W_{2n}}^{2} + \norm{\Delta Z_{2n}}^{2} + 8 \tildebeta h \Bigg\}
        \end{align}
        Multiplying both sides of the above inequality by the $\e^{2 \gamma t_{2n + 1}}$ and substituting equation~\eqref{eq:even_combine_sum} into the above inequality, we obtain
        \begin{equation}
            \label{eq:odd_combine_sum}
            \begin{split}
                \e^{2 \gamma t_{2n + 1}} \Bigg\{ \norm{\hatY_{2n + 1}^{f}}^{2} + \norm{\hatY_{2n + 1}^{c}}^{2} \Bigg\} &\leq C \Bigg\{ 2 \norm{x_{0}}^{2} + \sum_{k = 0}^{2n} \e^{2 \gamma t_{k}} \norm{\Delta Z_{k}}^{2} + \sum_{k = 0}^{2n} \e^{2 \gamma t_{k}} \norm{\Delta W_{k}}^{2} \\
                &\quad + \sum_{k = 0}^{2n - 1} \e^{2 \gamma t_{k}} \inner{\varphi_{1, k}, \Delta W_{k}} + \sum_{k = 0}^{2n - 1} \e^{2 \gamma t_{k}} \inner{\varPhi_{2, k}, \Delta Z_{k}} \\
                &\quad + \sum_{k = 0}^{n - 1} 16 \tildebeta h \e^{2 \gamma t_{2k}} \Bigg\}.
            \end{split}
        \end{equation}
        Combining equations~\eqref{eq:even_combine_sum} and \eqref{eq:odd_combine_sum}, raising both sides of the inequality to the power $p / 2$, taking the supremum over $n \in [0, N]$ and taking the expectation on both sides of the inequality, we obtain
        \begin{align}
            \E \Bigg[ \sup_{0 \leq n \leq N} \e^{\gamma p t_{n}} \Bigg\{ \norm{\hatY_{t_{n}}^{f}}^{2} + \norm{\hatY_{t_{n}}^{c}}^{2} \Bigg\}^{p / 2} \Bigg] &\leq \widetilde{C} \Big( J_{1} + J_{2} + J_{3} + J_{4} + J_{5} + J_{6} \Big),
        \end{align}
        where $\widetilde{C} \coloneqq \max (6^{p/2 - 1} C^{p/2}, 1)$, and 
        \begin{align*}
            J_{1} &\coloneqq 2^{p/2} \norm{x_{0}}^{p}, & J_{4} &\coloneqq \E \Bigg[ \sup_{1 \leq n \leq N/2} \Bigg| \sum_{k = 0}^{2n - 1} \e^{2 \gamma t_{k}} \inner{\varphi_{1, k}, \Delta W_{k}} \Bigg|^{p/2} \Bigg], \\
            ~
            J_{2} &\coloneqq \E \Bigg[ \Bigg| \sum_{k = 0}^{N - 1} \e^{2 \gamma t_{k}} \norm{\Delta Z_{k}}^{2} \Bigg|^{p/2} \Bigg], & J_{5} &\coloneqq \E \Bigg[ \sup_{1 \leq n \leq N/2} \Bigg| \sum_{k = 0}^{2n - 1} \e^{2 \gamma t_{k}} \inner{\varphi_{2, k}, \Delta Z_{k}} \Bigg|^{p/2} \Bigg], \\
            ~
            J_{3} &\coloneqq \E \Bigg[ \Bigg| \sum_{k = 0}^{N - 1} \e^{2 \gamma t_{k}} \norm{\Delta W_{k}}^{2} \Bigg|^{p/2} \Bigg], & J_{6} &\coloneqq \abs{\sum_{k = 0}^{N - 1} 16 \tildebeta \e^{2 \gamma t_{k}} h}^{p/2}.
        \end{align*}
        Recall that \[ \Delta W_{k} = \Delta V_{1}, \qquad \Delta Z_{k} = \frac{h}{2} \bigg( \Delta V_{1} + \frac{\Delta V_{2}}{\sqrt{3}} \bigg). \]
        Note that the term $J_{1}$ is a constant and is bounded. To bound the terms $J_{2}, J_{3}, J_{4}$ and $J_{6}$, we make use of the fact that
        \begin{equation*}
            \begin{aligned}
                \E[\norm{\Delta W_{k}}^{p} / h^{p/2}] &= \E[\norm{\Delta V_{1, k}}^{p} / h^{p/2}] \leq d^{p/2} p!! \leq d^{p/2} p^{p/2}, \\
                \E[\norm{\Delta Z_{k}}^{p} / h^{3p/2}] &= \frac{1}{2^{p} h^{p/2}} \E \Bigg[ \norm{\Delta V_{1, k} + \frac{1}{\sqrt{3}} \Delta V_{2, k}}^{p} \Bigg] \\
                &\leq \frac{1}{2 h^{p/2}} \Bigg( \E \Big[ \norm{\Delta V_{1, k}}^{p} \Big] + \frac{1}{3^{p/2}} \E \Big[ \norm{\Delta V_{2, k}}^{p} \Big] \Bigg) \leq 2 d^{p/2} p^{p/2}.
            \end{aligned}
        \end{equation*}
        The terms $J_{3}$ and $J_{6}$ can be bounded in a similar way as in \cite{fang2019multilevel} yielding the bounds
        \begin{equation*}
            \begin{split}
                J_{3} &= \E \Bigg[ \Bigg| \sum_{k = 0}^{N - 1} \e^{2 \gamma t_{k}} \norm{\Delta W_{k}}^{2} \Bigg|^{p/2} \Bigg] \leq d^{p/2} p^{p/2} \frac{\e^{\gamma p T}}{(2 \gamma)^{p/2}}, \\
                J_{6} &= \abs{\sum_{k = 0}^{N - 1} 16 \tildebeta \e^{2 \gamma t_{k}} h}^{p/2} \leq 4^{p} \tildebeta^{p/2} \frac{\e^{\gamma p T}}{(2 \gamma)^{p/2}}.
            \end{split}
        \end{equation*}
        Similarly, the term $J_{2}$ yields the bounds
        \begin{align*}
            J_{2} = \E \Bigg[ \Bigg| \sum_{k = 0}^{N - 1} \e^{2 \gamma t_{k}} \norm{\Delta Z_{k}}^{2} \Bigg|^{p/2} \Bigg] &= \E \Bigg[ \Bigg| \sum_{k = 0}^{N - 1} \e^{2 \gamma t_{k}} h \frac{\norm{\Delta Z_{k}}^{2}}{h} \Bigg|^{p/2} \Bigg] \\
            &\leq \abs{\sum_{k = 0}^{N - 1} \e^{2 \gamma t_{k}} h}^{p/2 - 1} \E \Bigg[ \sum_{k = 0}^{N - 1} \e^{2 \gamma t_{k}} h^{1 + p} \frac{\norm{\Delta Z_{k}}^{p}}{h^{3p/2}} \Bigg] \\
            &\leq 2 d^{p/2} p^{p/2} h^{p} \frac{\e^{\gamma p T}}{(2 \gamma)^{p /2}}.f
        \end{align*}
        To bound the terms $J_{4}$ and $J_{5}$, we rewrite the summations in the terms $J_{4}$ and $J_{5}$ as It\^o integrals, and remind ourselves that
        \begin{equation*}
                dW_{s} = dV_{1}, \qquad dZ_{s} = \frac{h}{2} \bigg( dV_{1} + \frac{dV_{2}}{\sqrt{3}} \bigg).
        \end{equation*}
        where $dV_{1}$ and $dV_{2}$ are independent standard Wiener processes. Applying the Burkholder--Davis--Gundy (BDG) inequality as stated in \cite{BARLOW1982198}, we then have
        \begin{equation*}
            \begin{split}
                J_{4} &\coloneqq \E \Bigg[ \sup_{0 \leq t \leq T} \abs{\int_{0}^{t} \e^{2 \gamma \floor{s / h} h} \inner{\varphi_{1, \floor{s / h}}, \di W_{s}} }^{p / 2} \Bigg] \leq (C_{\text{BDG}} p)^{p/4} \E \Bigg[ \abs{\sum_{k = 0}^{N - 1} \e^{4 \gamma t_{k}} h \norm{\varphi_{1, k}}^{2}}^{p/4} \Bigg], \\
            \end{split}
        \end{equation*}
        where $C_{\text{BDG}} > 0$ is a constant that is independent of $p$. Using Young's inequality, for all sufficiently small $h > 0$, we have
        \begin{equation*}
            \norm{\varphi_{1, 2n}}^{2} \leq 8 \Big( \norm{\hatY_{2n}^{f}}^{2} + \norm{\hatY_{2n}^{c}}^{2} \Big), \qquad \norm{\varphi_{1, 2n + 1}}^{2} \leq 4 \Big( \norm{\hatY_{2n}^{f}}^{2} + \norm{\hatY_{2n + 1}^{c}}^{2} \Big).
        \end{equation*}
        Using the above inequalities, we can write
        \begin{equation*}
            \begin{split}
                J_{4} &\leq (12 C_{\text{BDG}} p)^{p/4} \abs{\sum_{k = 0}^{N - 1} \e^{2 \gamma t_{k}} h}^{p/4 - 1} \E \Bigg[ \sum_{k = 0}^{N - 1} \e^{2 \gamma t_{k}} h \e^{\gamma p t_{k} / 2} \Big( \norm{\hatY_{t_{k}}^{c}}^{2} + \norm{\hatY_{t_{k}}^{f}}^{2} \Big)^{p / 4} \Bigg] \\
                ~
                &\leq (12 C_{\text{BDG}} p)^{p/4} \Bigl( \frac{\e^{2 \gamma T}}{2 \gamma} \Bigr)^{p/4 - 1} \E \Bigg[ \sum_{k = 0}^{N - 1} \e^{2 \gamma t_{k}} h \cdot \sup_{0 \leq n \leq N} \e^{\gamma p t_{n} / 2} \Big( \norm{\hatY_{t_{n}}^{c}}^{2} + \norm{\hatY_{t_{n}}^{f}}^{2} \Big)^{p/4}  \Bigg] \\
                ~
                &\leq (12 C_{\text{BDG}} p)^{p/4} \E \Bigg[ \frac{\e^{\gamma p T / 2}}{(2 \gamma)^{p/4}} \cdot \sup_{0 \leq n \leq N} \e^{\gamma p t_{n} / 2} \Big( \norm{\hatY_{t_{n}}^{c}}^{2} + \norm{\hatY_{t_{n}}^{f}}^{2} \Big)^{p/4}  \Bigg].
            \end{split}
        \end{equation*}
        Using Young's inequality, and Jensen's inequality, for any $\zeta > 0$, we can write
        \begin{equation*}
            \begin{split}
                J_{4} \leq \frac{1}{4 \zeta} \E \Bigg[ \sup_{0 \leq n \leq N} \e^{\gamma p t_{n}} \Big( \norm{\hatY_{t_{n}}^{c}}^{2} + \norm{\hatY_{t_{n}}^{f}}^{2} \Big)^{p/2}  \Bigg] + \zeta \Big( \frac{6 C_{\text{BDG}}}{\gamma} \Big)^{p/2} p^{p/2} \e^{\gamma p T}.
            \end{split}
        \end{equation*}
        From Young's inequality we have
        \begin{equation*}
            \norm{\varphi_{2, 2n}}^{2} \leq 8 d K_{b} \Big( \norm{\hatY_{2n}^{f}}^{2} + \norm{\hatY_{2n}^{c}}^{2} \Big), \qquad \norm{\varphi_{2, 2n + 1}}^{2} \leq 4 d K_{b} \Big( \norm{\hatY_{2n}^{f}}^{2} + \norm{\hatY_{2n}^{c}}^{2} \Big).
        \end{equation*}
        Using the above inequality, and Burkholder--Davis--Gundy's inequality, we can bound $J_{5}$ as 
        \begin{equation*}
            \begin{split}
                J_{5} &\coloneqq \E \Bigg[ \sup_{0 \leq t \leq T} \abs{\int_{0}^{t} \e^{2 \gamma \floor{s / h} h} \inner{\varphi_{2, \floor{s / h}}, \di Z_{s}} }^{p / 2} \Bigg] \\
                ~
                &\leq (4 d K_{b} C_{\text{BDG}} p)^{p/4} \E \Bigg[ \abs{\sum_{k = 0}^{N - 1} \e^{4 \gamma t_{k}} h^{3} \norm{\varphi_{2, k}}^{2}}^{p/4} \Bigg] \\
                ~
                &\leq (4 d K_{b} C_{\text{BDG}} p)^{p/4} \abs{\sum_{k = 0}^{N - 1} \e^{2 \gamma t_{k}} h}^{p/4 - 1} \E \Bigg[ \sum_{k = 0}^{N - 1} \e^{2 \gamma t_{k}} h^{1 + \frac{p}{2}} \e^{\frac{\gamma p t_{k}}{2}} \Big( \norm{\hatY_{t_{k}}^{c}}^{2} + \norm{\hatY_{t_{k}}^{f}}^{2} \Big)^{\frac{p}{4}} \Bigg] \\
                ~
                &\leq (4 d K_{b} C_{\text{BDG}} p)^{p/4} \Bigl( \frac{\e^{2 \gamma T}}{2 \gamma} \Bigr)^{p/4 - 1} \E \Bigg[ \sum_{k = 0}^{N - 1} \e^{2 \gamma t_{k}} h^{1 + p/2} \cdot \sup_{0 \leq n \leq N} \e^{\gamma p t_{n} / 2} \Big( \norm{\hatY_{t_{n}}^{c}}^{2} + \norm{\hatY_{t_{n}}^{f}}^{2} \Big)^{p/4}  \Bigg].
            \end{split}
        \end{equation*}
        Using Young's inequality, and Jensen's inequality, for any $\zeta > 0$, we have
        \begin{equation*}
            \begin{split}
                J_{5} \leq \frac{1}{4 \zeta} \E \Bigg[ \sup_{0 \leq n \leq N} \e^{\gamma p t_{n}} \Big( \norm{\hatY_{t_{n}}^{c}}^{2} + \norm{\hatY_{t_{n}}^{f}}^{2} \Big)^{p/2}  \Bigg] + \zeta \bigg( \frac{2 d K_{b} C_{\text{BDG}}}{\gamma} \bigg)^{p/2} p^{p/2} h^{p} \e^{\gamma p T}.
            \end{split}
        \end{equation*}
        Combining all the terms together, choosing $\zeta = \widetilde{C}$, using Jensen's inequality, there exists a constant $\kappa_{2} > 0$ independent of $T$ and $p$, for all $T > 0$ and $p \geq 1$, such that
        \begin{equation*}
            \begin{split}
                \E \Bigg[ \sup_{0 \leq n \leq N} \e^{\gamma p t_{n}} \Bigg( \norm{\hatY_{t_{n}}^{f}}^{2} + \norm{\hatY_{t_{n}}^{c}}^{2} \Bigg)^{p/2} \Bigg] &\leq \kappa_{2}^{p} p^{p/2} \e^{\gamma p T}.
            \end{split}
        \end{equation*}
        Since the above inequality holds for all $T > 0$, there exists a constant $\kappa_{1} > 0$ such that for any $0 < h < \kappa_{1}$,
        \begin{equation}
            \sup_{0 \leq n \leq N} \E \Big[ \norm{\hatY_{t_{n}}^{f}}^{p} \Big] \leq \kappa_{2}^{p} p^{p/2}, \qquad \sup_{0 \leq n \leq N} \E \Big[ \norm{\hatY_{t_{n}}^{c}}^{p} \Big] \leq \kappa_{2}^{p} p^{p/2}.
        \end{equation}

    \subsubsection*{Proof of Theorem~\ref{thm:stability_wo_spring}}
        We obtain the bound only for the fine trajectory to keep the argument brief. The proof argument can be extended similarly for the coarse trajectory. We also introduce the constants $C, \tildealpha, \tildebeta > 0$ that depend only on $d$, and the constants from Assumptions~\ref{asmp:lipschitz_a}, \ref{asmp:aD_lipschitz}, and \ref{asmp:dissipativity_a}. The constants $C, \tildealpha, \tildebeta$ may change from line to line in the proof argument.

        From equations~\eqref{eq:fine_odd_wo_spring}, and \eqref{eq:fine_even_wo_spring}, computing the square of the Euclidean norm of the fine trajectory, we obtain
        \begin{equation}
            \label{eq:barX_fine_norm_squared}
            \begin{split}
                \norm{\barX_{n + 1}^{f}}^{2} &= \norm{X_{n}}^{2} + \norm{\hatI_{n}}^{2} + 2 \inner{\barX_{n}^{f}, \hatI_{n}},
            \end{split}
        \end{equation}
        where \[ \hatI_{n} \coloneqq ha(\barX_{n}^{f}) + \Delta W_{n} + Da(\barX_{n}^{f}) \Delta Z_{n} + \frac{h^{2}}{2} \cA a(\barX_{n}^{f}). \]
        Using H\"older's inequality and Assumptions~\ref{asmp:lipschitz_a}, and \ref{asmp:aD_lipschitz}, the term $\norm{\hatI_{n}}^{2}$ yields the bound
        \begin{equation}
            \label{eq:barX_In_norm_term}
            \begin{split}
                \norm{\hatI_{n}}^{2} &\leq 4 \bigg( h^{2} \norm{a(\barX_{n}^{f})}^{2} + \norm{\Delta W_{n}}^{2} + \norm{Da(\barX_{n}^{f}) \Delta Z_{n}}^{2} + \frac{h^{4}}{4} \norm{\cA a(\barX_{n}^{f})}^{2} \bigg) \\
                ~
                &\leq C \bigg( h^{2} \big( 1 + \norm{\barX_{n}^{f}}^{2} \big) + \norm{\Delta W_{n}}^{2} + \norm{\Delta Z_{n}}^{2} \bigg).
            \end{split}
        \end{equation}
        Using Assumption~\ref{asmp:dissipativity_a}, and Young's inequality, the term $\inner{\barX_{n}^{f}, \hatI_{n}}$ yields
        \begin{equation}
            \label{eq:barX_inner_prod_term}
            \begin{split}
                \inner{\barX_{n}^{f}, \hatI_{n}} &= h \inner{\barX_{n}^{f}, a(\barX_{n}^{f})} + \inner{\barX_{n}^{f}, \Delta W_{n}} + \inner{\barX_{n}^{f}, Da(\barX_{n}^{f}) \Delta Z_{n}} + \frac{h^{2}}{2} \inner{\barX_{n}^{f}, \cA a(\barX_{n}^{f})} \\
                ~
                &\leq -\alpha h \norm{\barX_{n}^{f}}^{2} + \beta h + \inner{\barX_{n}^{f}, \Delta W_{n}} + \inner{\barX_{n}^{f}, Da(\barX_{n}^{f}) \Delta Z_{n}} + C h^{2} \Big( 1 + \norm{\barX_{n}^{f}}^{2} \Big) \\
                ~
                &\leq -\tildealpha h \norm{\barX_{n}^{f}}^{2} + \inner{\barX_{n}^{f}, \Delta W_{n}} + \inner{\barX_{n}^{f}, Da(\barX_{n}^{f}) \Delta Z_{n}} + \tildebeta h.
            \end{split}
        \end{equation}
        Substituting equations~\eqref{eq:barX_In_norm_term} and \eqref{eq:barX_inner_prod_term} in \eqref{eq:barX_fine_norm_squared}, we obtain
        \begin{equation*}
            \begin{split}
                \norm{\barX_{n + 1}^{f}}^{2} &\leq (1 - 2 \tildealpha h + C h^{2}) \norm{\barX_{n}^{f}}^{2} + C (\norm{\Delta W_{n}}^{2} + \norm{\Delta Z_{n}}^{2}) + 2 \inner{\barX_{n}^{f}, \Delta W_{n}} \\
                &\quad + 2 \inner{\barX_{n}^{f}, Da(\barX_{n}^{f}) \Delta Z_{n}} + 2 \tildebeta h.
            \end{split}
        \end{equation*}
        Note that for any $\gamma \in (0, \tildealpha - Ch/2)$, and for all sufficiently small $h > 0$, the following inequality holds
        \begin{equation*}
            \begin{split}
                \norm{\barX_{n + 1}^{f}}^{2} &\leq (1 - 2 \gamma h) \norm{\barX_{n}^{f}}^{2} + C (\norm{\Delta W_{n}}^{2} + \norm{\Delta Z_{n}}^{2}) + 2 \inner{\barX_{n}^{f}, \Delta W_{n}} \\
                &\quad + 2 \inner{\barX_{n}^{f}, Da(\barX_{n}^{f}) \Delta Z_{n}} + 2 \tildebeta h.
            \end{split}
        \end{equation*}
        Multiplying both sides of the above inequality by $\e^{2 \gamma t_{n + 1}}$, noting that $\e^{2 \gamma h} \leq 2$ and $(1 - 2 \gamma h) \leq \e^{-2 \gamma h}$, for all sufficiently small $h > 0$, we obtain
        \begin{equation*}
            \begin{split}
                \e^{2 \gamma t_{n + 1}} \norm{\barX_{n + 1}^{f}}^{2} &\leq \e^{2 \gamma t_{n}} \norm{\barX_{n}^{f}}^{2} + C \e^{2 \gamma t_{n}} (\norm{\Delta W_{n}}^{2} + \norm{\Delta Z_{n}}^{2}) + 4 \e^{2 \gamma t_{n}} \inner{\barX_{n}^{f}, \Delta W_{n}} \\
                &\quad + 4 \e^{2 \gamma t_{n}} \inner{\barX_{n}^{f}, Da(\barX_{n}^{f}) \Delta Z_{n}} + 4 \tildebeta \e^{2 \gamma t_{n}} h.
            \end{split}
        \end{equation*}
        Recursively summing the above inequality over $n = 0, 1, \ldots, N - 1$, taking the power $p/2$ on both sides of the inequality, and taking the supremum over all $n$, and taking the expected value on both sides of the inequality, we obtain
        \begin{equation*}
            \begin{split}
                \E \Bigg[ \sup_{0 \leq n \leq N - 1} \e^{\gamma p t_{n + 1}} \norm{\barX_{n + 1}^{f}}^{p} \Bigg] &\leq \widetilde{C} \Big( J_{1} + J_{2} + J_{3} + J_{4} + J_{5} + J_{6} \Big),
            \end{split}
        \end{equation*}
        where $\widetilde{C} \coloneqq \max(6^{p/2 - 1} C, \; 1)$, and the terms $J_{1}$ through $J_{6}$ are defined and bounded below.

        The terms $J_{1}$ and $J_{6}$ yield the bounds
        \begin{equation*}
            \begin{split}
                J_{1} \coloneqq \norm{x_{0}}^{p} < \infty, \qquad J_{6} \coloneqq \abs{ \sum_{k = 0}^{N - 1} 4 \tildebeta \e^{2 \gamma t_{k}} h }^{p/2} \leq 2^{p} \tildebeta^{p/2} \frac{\e^{\gamma p T}}{(2 \gamma)^{p/2}}.
            \end{split}
        \end{equation*}
        The terms $J_{2}, J_{3}, J_{4}$, and $J_{5}$ can be bounded in a similar way as in Theorem~\ref{thm:stability} to yield the bounds
        \begin{equation*}
            \begin{split}
                J_{2} &\coloneqq \E \Bigg[ \abs{ \sum_{k = 0}^{N - 1} \e^{2 \gamma t_{k}} \norm{\Delta W_{k}}^{2} }^{p/2} \Bigg] \leq d^{p/2} p^{p/2} \frac{\e^{\gamma p T}}{(2 \gamma)^{p/2}}, \\
                ~
                J_{3} &\coloneqq \E \Bigg[ \abs{ \sum_{k = 0}^{N - 1} \e^{2 \gamma t_{k}} \norm{\Delta Z_{k}}^{2} }^{p/2} \Bigg] \leq 2 d^{p/2} p^{p/2} h^{p} \frac{\e^{\gamma p T}}{(2 \gamma)^{p/2}}, \\
                ~
                J_{4} &\coloneqq \E \Bigg[ \sup_{0 \leq t \leq T} \abs{\int_{0}^{t} \e^{2 \gamma \floor{s / h} h} \inner{\barX_{\floor{s / h}}^{f}, \di W_{s}}}^{p/2} \Bigg] \\
                &\leq \frac{1}{4 \zeta} \E \Bigg[ \sup_{0 \leq n \leq N} \e^{\gamma p t_{n}} \norm{\barX_{n}^{f}}^{p} \Bigg] + \zeta \bigg( \frac{C_{\text{BDG}}}{\gamma} \bigg)^{p/2} p^{p/2} \e^{\gamma p T}, \\
                ~
                J_{5} &\coloneqq \E \Bigg[ \sup_{0 \leq t \leq T} \abs{\int_{0}^{t} \e^{2 \gamma \floor{s / h} h} \inner{\barX_{\floor{s / h}}^{f}, Da(\barX_{\floor{s / h}}^{f}) \di Z_{s}}}^{p/2} \Bigg] \\
                &\leq \frac{1}{4 \zeta} \E \Bigg[ \sup_{0 \leq n \leq N} \e^{\gamma p t_{n}} \norm{\barX_{n}^{f}}^{p} \Bigg] + \zeta \bigg( \frac{d K_{b} C_{\text{BDG}}}{\gamma} \bigg)^{p/2} p^{p/2} h^{p} \e^{\gamma p T},
            \end{split}
        \end{equation*}
        where $C_{\text{BDG}} > 0$ is a constant independent of $p$.

        Combining all the terms together, choosing $\zeta = \widetilde{C}$, using Jensen's inequality, there exists a constant $C_{2} > 0$ independent of $T$ and $p$, for all $T > 0$ and $p \geq 1$, such that
        \begin{equation*}
            \begin{split}
                \E \Bigg[ \sup_{0 \leq n \leq N} \e^{\gamma p t_{n}} \norm{\barX_{t_{n}}^{f}}^{p} \Bigg] &\leq C_{2}^{p} p^{p/2} \e^{\gamma p T}.
            \end{split}
        \end{equation*}
        Since the above inequality holds for all $T > 0$, there exists a constant $C_{1} > 0$ such that for any $0 < h < C_{1}$,
        \begin{equation*}
            \sup_{0 \leq n \leq N} \E \Big[ \norm{\barX_{t_{n}}^{f}}^{p} \Big] \leq C_{2}^{p} p^{p/2}.
        \end{equation*}

    \subsubsection*{Proof of Theorem~\ref{thm:convergence}}
        Using equations~\eqref{eq:fine_odd} to \eqref{eq:coarse_even}, we can write the difference between the fine and the coarse trajectories at the even time-step as
        \begin{equation}
            \label{eq:fc_even_diff}
            \begin{split}
                \hatY_{2n + 2}^{f} - \hatY_{2n + 2}^{c} &= (1 - 4Sh + 2S^{2}h^{2}) (\hatY_{2n}^{f} - \hatY_{2n}^{c}) + (2 - Sh)h \big( a(\hatY_{2n}^{f}) - a(\hatY_{2n}^{c}) \big) \\
                &\quad + h \big( a(\hatY_{2n + 1}^{f}) - a(\hatY_{2n}^{f}) \big) + (1 - Sh) \big( Da(\hatY_{2n}^{f}) - Da(\hatY_{2n}^{c}) \big) \Delta Z_{2n} \\
                &\quad + \big( Da(\hatY_{2n + 1}^{f}) - Da(\hatY_{2n}^{f}) \big) \Delta Z_{2n + 1} + \big( Da(\hatY_{2n}^{f}) - Da(\hatY_{2n}^{c}) \big) \Delta Z_{2n + 1} \\
                &\quad - h Da(\hatY_{2n}^{c}) \Delta W_{2n} + \frac{h^{2}}{2} (2 - Sh) \big( \cA a(\hatY_{2n}^{f}) - \cA a(\hatY_{2n}^{c}) \big) \\
                &\quad + \frac{h^{2}}{2} \big( \cA a(\hatY_{2n + 1}^{f}) - \cA a(\hatY_{2n}^{f}) \big) - h^{2} \cA a(\hatY_{2n}^{c}).
            \end{split}
        \end{equation}
        As a first step, we aim to obtain a loose upper-bound on the difference between the fine and the coarse trajectories
        \begin{equation}
            \label{eq:method1_diff_fc_1}
            \begin{split}
                \hatY_{2n + 2}^{f} - \hatY_{2n + 2}^{c} &= (1 - 4Sh + 2S^{2}h^{2}) (\hatY_{2n}^{f} - \hatY_{2n}^{c}) + (2 - Sh)h \big( a(\hatY_{2n}^{f}) - a(\hatY_{2n}^{c}) \big) + \hatI_{2n + 1},
            \end{split}
        \end{equation}
        where
        \begin{equation*}
            \begin{split}
                \hatI_{2n + 1} &\coloneqq h \big( a(\hatY_{2n + 1}^{f}) - a(\hatY_{2n}^{f}) \big) + (1 - Sh) \big( Da(\hatY_{2n}^{f}) - Da(\hatY_{2n}^{c}) \big) \Delta Z_{2n} \\
                &\quad + \big( Da(\hatY_{2n + 1}^{f}) - Da(\hatY_{2n}^{f}) \big) \Delta Z_{2n + 1} + \big( Da(\hatY_{2n}^{f}) - Da(\hatY_{2n}^{c}) \big) \Delta Z_{2n + 1} \\
                &\quad - h Da(\hatY_{2n}^{c}) \Delta W_{2n} + \frac{h^{2}}{2} (2 - Sh) \big( \cA a(\hatY_{2n}^{f}) - \cA a(\hatY_{2n}^{c}) \big) \\
                &\quad + \frac{h^{2}}{2} \big( \cA a(\hatY_{2n + 1}^{f}) - \cA a(\hatY_{2n}^{f}) \big) - h^{2} \cA a(\hatY_{2n}^{c}).
            \end{split}
        \end{equation*}
        Taking the square of the Euclidean norm on both sides of equation~\eqref{eq:method1_diff_fc_1}, using the Cauchy--Schwarz inequality and the one-sided Lipschitz property of the drift coefficient, we can  for sufficiently small $h>0$ write
        \begin{equation*}
            \begin{split}
                \norm{\hatY_{2n + 2}^{f} - \hatY_{2n + 2}^{c}}^{2} &= \norm{(1 - 4Sh + 2S^{2}h^{2}) (\hatY_{2n}^{f} - \hatY_{2n}^{c}) + (2 - Sh)h \big( a(\hatY_{2n}^{f}) - a(\hatY_{2n}^{c}) \big) + \hatI_{2n + 1}}^{2} \\
                ~
                &\leq (1 - 4Sh + 2S^{2}h^{2})^{2} \norm{\hatY_{2n}^{f} - \hatY_{2n}^{c}}^{2} + 4 h^{2} \norm{a(\hatY_{2n}^{f}) - a(\hatY_{2n}^{c})}^{2} \\
                &\quad + \norm{\hatI_{2n + 1}}^{2} + 2h (1 - 4Sh + 2S^{2}h^{2}) (2 - Sh) \inner{\hatY_{2n}^{f} - \hatY_{2n}^{c}, a(\hatY_{2n}^{f}) - a(\hatY_{2n}^{c})} \\
                &\quad + 2 (1 + 2 \sqrt{d} K_{b} h) \norm{\hatY_{2n}^{f} - \hatY_{2n}^{c}}\norm{\hatI_{2n + 1}} \\
                ~
                &\leq (1 - 4 (2S - \lambda) h + Ch^{2}) \norm{\hatY_{2n}^{f} - \hatY_{2n}^{c}}^{2} + \norm{\hatI_{2n + 1}}^{2} \\
                &\quad + 2 (1 + 2 \sqrt{d} K_{b} h) \norm{\hatY_{2n}^{f} - \hatY_{2n}^{c}}\norm{\hatI_{2n + 1}},
            \end{split}
        \end{equation*}
        where $C > 0$ is a constant that depends only on $S, d$, and the constants from Assumptions~\ref{asmp:lipschitz_a}, \ref{asmp:aD_lipschitz} and  \ref{asmp:dissipativity_a}. The constant $C$ may change from line to line in the proof argument. From the above inequality, we observe that for all sufficiently small $h>0$, there exists a $\gamma \in (0, 2S - \lambda)$ such that the following holds
        \begin{equation*}
            \begin{split}
                \norm{\hatY_{2n + 2}^{f} - \hatY_{2n + 2}^{c}}^{2} &\leq (1 - 4 \gamma h) \norm{\hatY_{2n}^{f} - \hatY_{2n}^{c}}^{2} + 3 \norm{\hatY_{2n}^{f} - \hatY_{2n}^{c}}\norm{\hatI_{2n + 1}} + \norm{\hatI_{2n + 1}}^{2}.
            \end{split}
        \end{equation*}
        Further applying Young's inequality to
        \begin{equation*}
            \norm{\hatY_{2n}^{f} - \hatY_{2n}^{c}}\norm{\hatI_{2n + 1}} \leq \frac{2 \gamma h}{3} \norm{\hatY_{2n}^{f} - \hatY_{2n}^{c}}^{2} + \frac{3}{8 \gamma h} \norm{\hatI_{2n + 1}}^{2},
        \end{equation*}
        we obtain
        \begin{equation}
            \label{eq:method1_diff_fc_2}
            \begin{split}
                \norm{\hatY_{2n + 2}^{f} - \hatY_{2n + 2}^{c}}^{2} &\leq (1 - 2 \gamma h) \norm{\hatY_{2n}^{f} - \hatY_{2n}^{c}}^{2} + \frac{C}{h} \norm{\hatI_{2n + 1}}^{2}.
            \end{split}
        \end{equation}
        Using H\"older's inequality, and Assumptions~\ref{asmp:lipschitz_a}, \ref{asmp:dissipativity_a}, and \ref{asmp:aD_lipschitz}, the term $\hatI_{2n + 1}$ can be bounded as follows
        \begin{equation}
            \label{eq:hatI_2n_bound_1}
            \begin{split}
                \norm{\hatI_{2n + 1}}^{2} &\leq 8 C \Bigg\{ h^{2} \norm{\hatY_{2n + 1}^{f} - \hatY_{2n}^{f}}^{2} + h^{2} \norm{\Delta W_{2n}}^{2} + \norm{\Delta Z_{2n}}^{2} + \norm{\Delta Z_{2n + 1}}^{2} \\
                &\quad + h^{4} \Big( \norm{\hatY_{2n}^{f}}^{2} + \norm{\hatY_{2n}^{c}}^{2} \Big) + h^{4} \Bigg\}.
            \end{split}
        \end{equation}
        From equation~\eqref{eq:fine_odd}, we have
        \begin{equation}
            \begin{split}
                \label{eq:pth_power_fine_diff_intermediate}
                \norm{\hatY_{2n + 1}^{f} - \hatY_{2n}^{f}}^{p} &\leq 6^{p - 1} C \Bigg\{ h^{p} \bigg( 1 + \norm{\hatY_{2n}^{f}}^{p} + \norm{\hatY_{2n}^{c}}^{p} \bigg) + \norm{\Delta W_{2n}}^{p} + \norm{\Delta Z_{2n}}^{p} \Bigg\}.
            \end{split}
        \end{equation}
        Substituting equation~\eqref{eq:pth_power_fine_diff_intermediate} in \eqref{eq:hatI_2n_bound_1}, for $p = 2$, we obtain
        \begin{equation}
            \label{eq:hatI_2n_bound_2}
            \begin{split}
                \norm{\hatI_{2n + 1}}^{2} &\leq C \Bigg\{ h^{4} \bigg( 1 + \norm{\hatY_{2n}^{f}}^{2} + \norm{\hatY_{2n}^{c}}^{2} \bigg) +h^{2} \norm{\Delta W_{2n}}^{2} + \norm{\Delta Z_{2n}}^{2} + \norm{\Delta Z_{2n + 1}}^{2} \Bigg\}.
            \end{split}
        \end{equation}
        Substituting equation~\eqref{eq:hatI_2n_bound_2} in \eqref{eq:method1_diff_fc_2}, we obtain
        \begin{equation*}
            \begin{split}
                \norm{\hatY_{2n + 2}^{f} - \hatY_{2n + 2}^{c}}^{2} &\leq (1 - 2 \gamma h) \norm{\hatY_{2n}^{f} - \hatY_{2n}^{c}}^{2} + C \Bigg\{ h^{3} \bigg( \norm{\hatY_{2n}^{f}}^{2} + \norm{\hatY_{2n}^{c}}^{2} \bigg) + h \norm{\Delta W_{2n}}^{2} \\
                &\quad + \frac{1}{h} \Big( \norm{\Delta Z_{2n}}^{2} + \norm{\Delta Z_{2n + 1}}^{2} \Big) + h^{3} \Bigg\}.
            \end{split}
        \end{equation*}
        Multiplying both sides of the above inequality by $\e^{\gamma t_{2n}}$, noting that $1 - 2 \gamma h \leq e^{2 \gamma h} \leq 2$ holds for all sufficiently small $h > 0$, and from recursive summation of the inequality over multiple time-steps, we obtain
        \begin{equation*}
            \begin{split}
                \e^{\gamma t_{2n}} \norm{\hatY_{2n}^{f} - \hatY_{2n}^{c}}^{2} &\leq C \Bigg\{ h^{3} \sum_{k = 0}^{2n - 1} \e^{\gamma t_{k}} \bigg( \norm{\hatY_{k}^{f}}^{2} + \norm{\hatY_{k}^{c}}^{2} \bigg) + h \sum_{k = 0}^{2n - 1} \e^{\gamma t_{k}} \norm{\Delta W_{k}}^{2} \\
                &\quad + \frac{1}{h} \sum_{k = 0}^{2n - 1} \e^{\gamma t_{k}} \norm{\Delta Z_{k}}^{2} + \sum_{k = 0}^{2n - 1} \e^{\gamma t_{k}} h^{3} \Bigg\}.
            \end{split}
        \end{equation*}
        Raising both sides of the above inequality to the power $p/2$, taking the supremum over $n \in [0, N/2]$, and taking the expectation on both sides of the inequality, we obtain
        \begin{equation*}
            \begin{split}
                \E \Bigg[ \sup_{0 \leq n \leq N/2} \e^{\gamma p t_{2n} / 2} \norm{\hatY_{2n}^{f} - \hatY_{2n}^{c}}^{p} \Bigg] &\leq \widetilde{C} \Bigg\{ J_{1} + J_{2} + J_{3} + J_{4} \Bigg\},
            \end{split}
        \end{equation*}
        where $\widetilde{C} \coloneqq \max (4^{p/2 - 1} C^{p/2}, 1)$, and
        \begin{equation*}
            \begin{aligned}[c]
                J_{1} &= h^{3p/2} \E \Bigg[ \abs{\sum_{k = 0}^{N - 1} \e^{\gamma t_{k}} \Big( \norm{\hatY_{k}^{f}}^{2} + \norm{\hatY_{k}^{c}}^{2} \Big)}^{p/2} \Bigg], \\
                J_{3} &= h^{3p/2} \abs{\sum_{k = 0}^{N - 1} \e^{\gamma t_{k}} }^{p/2},
            \end{aligned}
            \qquad
            \begin{aligned}[c]
                J_{2} &= h^{p/2} \E \Bigg[ \abs{\sum_{k = 0}^{N - 1} \e^{\gamma t_{k}} \norm{\Delta W_{k}}^{2}}^{p/2} \Bigg], \\
                J_{4} &= h^{-p/2} \E \Bigg[ \abs{\sum_{k = 0}^{N - 1} \e^{\gamma t_{k}} \norm{\Delta Z_{k}}^{2}}^{p/2} \Bigg].
            \end{aligned}
        \end{equation*}
        To keep the argument concise, we introduce the constants $C_{J_{i}} > 0$ for $i = 1, 2, 3, 4$ that depend only on $S, d$, $\gamma$, and the constants from Assumptions~\ref{asmp:lipschitz_a}, \ref{asmp:aD_lipschitz} and \ref{asmp:dissipativity_a}. Using Theorem~\ref{thm:stability}, term $J_{1}$ yields
        \begin{equation*}
            \begin{split}
                J_{1} &= h^{p} \E \Bigg[ \abs{\sum_{k = 0}^{N - 1} \e^{\gamma t_{k}} h \Big( \norm{\hatY_{k}^{f}}^{2} + \norm{\hatY_{k}^{c}}^{2} \Big)}^{p/2} \Bigg] \\
                &\leq h^{p} \abs{\sum_{k = 0}^{N - 1} \e^{\gamma t_{k}} h}^{p/2 - 1} \E \Bigg[ \sum_{k = 0}^{N - 1} \e^{\gamma t_{k}} h \Big( \norm{\hatY_{k}^{f}}^{2} + \norm{\hatY_{k}^{c}}^{2} \Big)^{p/2} \Bigg] \\
                &\leq 2^{p/2 - 1} h^{p} \abs{\int_{0}^{T} \e^{\gamma t} \di t}^{p/2 - 1} \sum_{k = 0}^{N - 1} \e^{\gamma t_{k}} h \E \Big[ \norm{\hatY_{k}^{f}}^{p} + \norm{\hatY_{k}^{c}}^{p} \Big] \\
                &\leq C_{J_{1}}^{p} p^{p/2} h^{p} \e^{\gamma p T / 2}.
            \end{split}
        \end{equation*}
        The terms $J_{2}$, $J_{3}$, and $J_{4}$ can be bounded in a similar way as in Theorem~\ref{thm:stability} yielding the bounds
        \begin{equation*}
            J_{2} \leq C_{J_{2}}^{p} p^{p/2} h^{p/2} \e^{\gamma p T / 2}, \qquad
            J_{3} \leq C_{J_{3}}^{p} h^{p} \e^{\gamma p T / 2}, \qquad
            J_{4} \leq C_{J_{4}}^{p} p^{p/2} h^{p/2} \e^{\gamma p T / 2}.
        \end{equation*}
        Combining all the estimates together, there exists a constant $\widetilde{C}_{1} > 0$ that is independent of $T$ and $p$, such that
        \begin{equation*}
            \begin{split}
                \E \Big[ \sup_{0 \leq n \leq N/2} \e^{\gamma p t_{2n} / 2} \norm{\hatY_{2n}^{f} - \hatY_{2n}^{c}}^{p} \Big] &\leq \widetilde{C}_{(1)}^{p} p^{p/2} h^{p/2} \e^{\gamma p T / 2}.
            \end{split}
        \end{equation*}
        Since the above inequality holds for any arbitrary $T > 0$, there exists a constant $\kappa_{3} > 0$, such that for any $0 < h < \kappa_{3}$,
        \begin{equation}
            \label{eq:diff_bound_1}
            \sup_{0 \leq n \leq N/2} \E \Big[ \norm{\hatY_{2n}^{f} - \hatY_{2n}^{c}}^{p} \Big] \leq \widetilde{C}_{1}^{p} p^{p/2} h^{p/2}.
        \end{equation}
        Having obtained a loose upper bound on the difference between the fine and the coarse trajectories, we aim to sharpen the bound obtained in equation~\eqref{eq:diff_bound_1}. Taking the square of the Euclidean norm on both sides of equation~\eqref{eq:method1_diff_fc_1} and using the one-sided Lipschitz property of the drift coefficient, we can write
        \begin{equation*}
            \begin{split}
                \norm{\hatY_{2n + 2}^{f} - \hatY_{2n + 2}^{c}}^{2} &= \norm{(1 - 4Sh + 2S^{2}h^{2}) (\hatY_{2n}^{f} - \hatY_{2n}^{c}) + (2 - Sh)h \big( a(\hatY_{2n}^{f}) - a(\hatY_{2n}^{c}) \big) + \hatI_{2n + 1}}^{2} \\
                ~
                &\leq (1 - 4Sh + 2S^{2}h^{2})^{2} \norm{\hatY_{2n}^{f} - \hatY_{2n}^{c}}^{2} + 6 h^{2} \norm{a(\hatY_{2n}^{f}) - a(\hatY_{2n}^{c})}^{2} \\
                &\quad + 3 \norm{\hatI_{2n + 1}}^{2} + 2h (1 - 4Sh + 2S^{2}h^{2}) (2 - Sh) \inner{\hatY_{2n}^{f} - \hatY_{2n}^{c}, a(\hatY_{2n}^{f}) - a(\hatY_{2n}^{c})} \\
                &\quad + 2 (1 - 4Sh + 2S^{2}h^{2}) \inner{\hatY_{2n}^{f} - \hatY_{2n}^{c}, \hatI_{2n + 1}} \\
                ~
                &\leq (1 - 4 (2S - \lambda) h + Ch^{2}) \norm{\hatY_{2n}^{f} - \hatY_{2n}^{c}}^{2} + 3 \norm{\hatI_{2n + 1}}^{2} \\
                &\quad + 2 (1 - 4Sh + 2S^{2}h^{2}) \inner{\hatY_{2n}^{f} - \hatY_{2n}^{c}, \hatI_{2n + 1}}.
            \end{split}
        \end{equation*}
        As in the first approach, there exists a $\gamma \in (0, 2S - \lambda)$, for all sufficiently small $h > 0$, such that
        \begin{equation}
            \label{eq:method2_diff_fc_1}
            \begin{split}
                \norm{\hatY_{2n + 2}^{f} - \hatY_{2n + 2}^{c}}^{2} &\leq (1 - 4 \gamma h) \norm{\hatY_{2n}^{f} - \hatY_{2n}^{c}}^{2} + 3 \norm{\hatI_{2n + 1}}^{2} \\
                &\qquad + 2 (1 - 4Sh + 2S^{2}h^{2}) \inner{\hatY_{2n}^{f} - \hatY_{2n}^{c}, \hatI_{2n + 1}}.
            \end{split}
        \end{equation}
        Using the Mean value theorem, we can write
        \begin{equation}
            \label{eq:first_order_R1}
            \begin{split}
                a(\hatY_{2n + 1}^{f}) - a(\hatY_{2n}^{f}) &= Da(\hatY_{2n}^{f}) \Big( \hatY_{2n + 1}^{f} - \hatY_{2n}^{f} \Big) + R_{2},
            \end{split}
        \end{equation}
        where $R_{2}$ denotes the second-order remainder from the Mean value theorem. From Assumption~\ref{asmp:lipschitz_a}, it follows that $\norm{R_{2}} \leq d^{3/2} K_{b} \norm{\hatY_{2n + 1}^{f} - \hatY_{2n}^{f}}^{2}$.
        
        Substituting equation~\eqref{eq:first_order_R1} in \eqref{eq:method2_diff_fc_1}, using H\"older's inequality, Young's inequality, and equation~\eqref{eq:pth_power_fine_diff_intermediate}, we have
        \begin{equation*}
            \begin{split}
                \norm{\hatY_{2n + 2}^{f} - \hatY_{2n + 2}^{c}}^{2} &\leq (1 - 4 \gamma h) \norm{\hatY_{2n}^{f} - \hatY_{2n}^{c}}^{2} + C \Bigg\{ \Big( h^{2} \norm{\Delta W_{2n}}^{2} + \norm{\Delta Z_{2n}}^{2} \\
                &\quad + \norm{\Delta Z_{2n + 1}}^{2} \Big) \norm{\hatY_{2n}^{f} - \hatY_{2n}^{c}}^{2} + h^{2} \norm{\Delta Z_{2n}}^{2} + h^{4} \Big( 1 + \norm{\hatY_{2n}^{f}}^{2} \Big) \\
                &\quad + \Big( h^{4} + \norm{\Delta Z_{2n + 1}}^{2} \Big) \norm{\hatY_{2n + 1}^{f} - \hatY_{2n}^{f}}^{2} + h^{2} \norm{R_{2}}^{2} \Bigg\} \\
                &\quad
                + 2 (1 - 4Sh + 2S^{2}h^{2}) \Bigg\{ h \inner{\hatY_{2n}^{f} - \hatY_{2n}^{c}, R_{2}} \\
                &\quad + h \inner{\hatY_{2n}^{f} - \hatY_{2n}^{c}, \big( Da(\hatY_{2n}^{f}) - Da(\hatY_{2n}^{c}) \big) \Delta W_{2n}} \\
                &\quad + h \inner{\hatY_{2n}^{f} - \hatY_{2n}^{c}, Da(\hatY_{2n}^{f}) Da(\hatY_{2n}^{f}) \Delta Z_{2n}} \\
                &\quad + (1 - Sh) \inner{\hatY_{2n}^{f} - \hatY_{2n}^{c}, \big( Da(\hatY_{2n}^{f}) - Da(\hatY_{2n}^{c}) \big) \Delta Z_{2n}} \\
                &\quad + \inner{\hatY_{2n}^{f} - \hatY_{2n}^{c}, \big( Da(\hatY_{2n + 1}^{f}) - Da(\hatY_{2n}^{f}) \big) \Delta Z_{2n + 1}} \\
                &\quad + \inner{\hatY_{2n}^{f} - \hatY_{2n}^{c}, \big( Da(\hatY_{2n}^{f}) - Da(\hatY_{2n}^{c}) \big) \Delta Z_{2n + 1}} \\
                &\quad + \frac{h^{2}}{2} \inner{\hatY_{2n}^{f} - \hatY_{2n}^{c}, \cA a(\hatY_{2n + 1}^{f}) - \cA a(\hatY_{2n}^{f})} - \frac{h^{2}}{2} \inner{\hatY_{2n}^{f} - \hatY_{2n}^{c}, \Delta a(\hatY_{2n}^{f})} \Bigg\}.
            \end{split}
        \end{equation*}
        Multiplying both sides of the above inequality by $\e^{2 \gamma t_{2n}}$, noting that $\e^{4 \gamma h} \leq 2$ for all sufficiently small $h > 0$, summing over multiple time-steps, raising both sides of the inequality to the power $p/2$, taking supremum over $n \in [0, N/2]$, taking the expectation on both sides of the inequality, we obtain
        \begin{align*}
            \E \Big[ \sup_{0 \leq n \leq N/2} \e^{\gamma p t_{2n}} \norm{\hatY_{2n + 2}^{f} - \hatY_{2n + 2}^{c}}^{p} \Big] &\leq \widetilde{C} \sum_{m = 1}^{16} J_{m},
        \end{align*}
        where $\widetilde{C} \coloneqq \max ( 16^{p/2 - 1} C^{p/2}, 1 )$ and the terms $J_{m}$ for $m = 1, \ldots, 16$ are defined and bounded below.
        For the term $J_{1}$, we have
        \begin{equation*}
            \begin{split}
                J_{1} &\coloneqq \E \Bigg[ \abs{ \sum_{k = 0}^{N/2 - 1} \e^{2 \gamma t_{2k}} h^{2} \norm{\Delta W_{2k}}^{2} \norm{\hatY_{2k}^{f} - \hatY_{2k}^{c}}^{2} }^{p/2} \Bigg] \\
                &\leq \abs{ \sum_{k = 0}^{N/2 - 1} \e^{2 \gamma t_{2k}} h }^{p/2 - 1} \E \Bigg[ \sum_{k = 0}^{N/2 - 1} \e^{2 \gamma t_{2k}} h^{1 + p} \frac{\norm{\Delta W_{2k}}^{p}}{h^{p/2}} \norm{\hatY_{2k}^{f} - \hatY_{2k}^{c}}^{p} \Bigg] \\
                &\leq d^{p/2} p^{p/2} h^{p} \abs{\int_{0}^{T} \e^{2 \gamma t} \di t}^{p/2 - 1} \sum_{k = 0}^{N/2 - 1} \e^{2 \gamma t_{2k}} h \E \Big[ \norm{\hatY_{2k}^{f} - \hatY_{2k}^{c}}^{p} \Big].
            \end{split}
        \end{equation*}
        Using equation~\eqref{eq:diff_bound_1}, we can write
        \begin{equation*}
            \begin{split}
                J_{1} &\leq C_{J_{1}}^{p} p^{p} h^{3p/2} \e^{\gamma p T}.
            \end{split}
        \end{equation*}
        Similarly, the terms $J_{2}, J_{3}$, and $J_{4}$, yield the bounds
        \begin{equation*}
            \begin{split}
                J_{2} &\coloneqq \E \Bigg[ \abs{ \sum_{k = 0}^{N/2 - 1} \e^{2 \gamma t_{2k}} \norm{\Delta Z_{2k}}^{2} \norm{\hatY_{2k}^{f} - \hatY_{2k}^{c}}^{2} }^{p/2} \Bigg] \leq C_{J_{2}}^{p} p^{p} h^{3p/2} \e^{\gamma p T}, \\
                J_{3} &\coloneqq \E \Bigg[ \abs{ \sum_{k = 0}^{N/2 - 1} \e^{2 \gamma t_{2k}} \norm{\Delta Z_{2k + 1}}^{2} \norm{\hatY_{2k}^{f} - \hatY_{2k}^{c}}^{2} }^{p/2} \Bigg] \leq C_{J_{3}}^{p} p^{p} h^{3p/2} \e^{\gamma p T}, \\
                J_{4} &\coloneqq \E \Bigg[ \abs{ \sum_{k = 0}^{N/2 - 1} \e^{2 \gamma t_{2k}} h^{2} \norm{\Delta Z_{2k}}^{2} }^{p/2} \Bigg] \leq C_{J_{4}}^{p} p^{p/2} h^{2p} \e^{\gamma p T}.
            \end{split}
        \end{equation*}
        The term $J_{5}$, using the result from Theorem~\ref{thm:stability}, yields the bound
        \begin{equation*}
            \begin{split}
                J_{5} &\coloneqq \E \Bigg[ \abs{ \sum_{k =0}^{N/2 - 1} \e^{2 \gamma t_{2k}} h^{4} \Big( 1 + \norm{\hatY_{2k}^{f}}^{2} \Big) }^{p/2} \Bigg] \leq C_{J_{5}}^{p} p^{p/2} h^{3p/2} \e^{\gamma p T}.
            \end{split}
        \end{equation*}
        For the term $J_{6}$, using equation~\eqref{eq:pth_power_fine_diff}, we obtain
        \begin{equation*}
            \begin{split}
                J_{6} &\coloneqq h^{3p/2} \E \Bigg[ \abs{ \sum_{k =0}^{N/2 - 1} \e^{2 \gamma t_{2k}} h \norm{\hatY_{2k + 1}^{f} - \hatY_{2k}^{f}}^{2} }^{p/2} \Bigg] \leq C_{J_{6}}^{p} p^{p/2} h^{2p} \e^{\gamma p T},
            \end{split}
        \end{equation*}
        where, using the result from Theorem~\ref{thm:stability}, we have
        \begin{equation}
            \label{eq:pth_power_fine_diff}
            \E \bigg[ \norm{\hatY_{2n + 1}^{f} - \hatY_{2n}^{f}}^{p} \bigg] \leq C_{f}^{p} p^{p/2} h^{p/2},
        \end{equation}
        for all $p \geq 1$, where $C_{f} > 0$ is a constant that independent of $T$ and $p$.
        Similarly, using equation~\eqref{eq:first_order_R1} we have
        \begin{equation*}
            \begin{split}
                J_{7} &\coloneqq \E \Bigg[ \abs{ \sum_{k =0}^{N/2 - 1} \e^{2 \gamma t_{2k}} \norm{\Delta Z_{2k + 1}}^{2} \norm{\hatY_{2k + 1}^{f} - \hatY_{2k}^{f}}^{2} }^{p/2} \Bigg] \leq C_{J_{7}}^{p} p^{p} h^{3p/2} \e^{\gamma p T}, \\
                ~
                J_{8} &\coloneqq \E \Bigg[ \abs{ \sum_{k = 0}^{N/2 - 1} \e^{2 \gamma t_{2k}} h^{2} \norm{R_{2}}^{2} }^{p/2} \Bigg] \leq C_{J_{8}}^{p} p^{p} h^{3p/2} \e^{\gamma p T}.
            \end{split}
        \end{equation*}
        For the term $J_{9}$, using Cauchy--Schwarz's inequality, Jensen's inequality, and Young's inequality for any $\zeta > 0$, we have
        \begin{equation*}
            \begin{split}
                J_{9} &\coloneqq \E \Bigg[ \sup_{0 \leq n \leq N/2 - 1} \abs{ \sum_{k = 0}^{n} \e^{2 \gamma t_{2k}} h \inner{\hatY_{2k}^{f} - \hatY_{2k}^{c}, R_{2}} }^{p/2} \Bigg] \\
                &\leq \E \Bigg[ \abs{ \sup_{0 \leq n \leq N/2} \e^{\gamma t_{2n}} \norm{\hatY_{2n}^{f} - \hatY_{2n}^{c}} }^{p/2} \abs{ \sum_{k = 0}^{N/2 - 1} \e^{\gamma t_{2k}} h \norm{\hatY_{2k + 1}^{f} - \hatY_{2k}^{f}}^{2} }^{p/2} \Bigg] \\
                &\leq \frac{1}{8 \zeta} \E \Bigg[ \sup_{0 \leq n \leq N/2} \e^{\gamma p t_{2n}} \norm{\hatY_{2n}^{f} - \hatY_{2n}^{c}}^{p} \Bigg] + 2 \zeta \E \Bigg[ \abs{ \sum_{k = 0}^{N/2 - 1} \e^{\gamma t_{2k}} h \norm{\hatY_{2k + 1}^{f} - \hatY_{2k}^{f}}^{2} }^{p} \Bigg] \\
                &\leq \frac{1}{8 \zeta} \E \Bigg[ \sup_{0 \leq n \leq N/2} \e^{\gamma p t_{2n}} \norm{\hatY_{2n}^{f} - \hatY_{2n}^{c}}^{p} \Bigg] + \zeta C_{J_{9}}^{p} p^{p} h^{p} \e^{\gamma p T}.
            \end{split}
        \end{equation*}
        For the term $J_{10}$, using the Burkholder--Davis--Gundy inequality as stated in \cite{BARLOW1982198}, and equation~\eqref{eq:diff_bound_1}, we have
        \begin{equation*}
            \begin{split}
                J_{10} &\coloneqq \E \Bigg[ \sup_{0 \leq n \leq N/2 - 1} \abs{ \sum_{k = 0}^{n} \e^{2 \gamma t_{2k}} h \inner{\hatY_{2k}^{f} - \hatY_{2k}^{c}, \big( Da(\hatY_{2k}^{f}) - Da(\hatY_{2k}^{c}) \big) \Delta W_{2k}} }^{p/2} \Bigg] \\
                &= \E \Bigg[ \sup_{0 \leq t \leq T} \abs{ \int_{0}^{t} \e^{2 \gamma \floor{s / 2 h} h} h \inner{\hatY_{(2 \floor{s / 2h})}^{f} - \hatY_{(2 \floor{s / 2h})}^{c}, \big( Da(\hatY_{(2 \floor{s / 2h})}^{f}) - Da(\hatY_{(2 \floor{s / 2h})}^{c}) \big) \di W_{s}} }^{p/2} \Bigg] \\
                &\leq (C_{\text{BDG}} p)^{p/4} d^{3p/4} K_{b}^{p/2} \E \Bigg[ \abs{ \sum_{k = 0}^{N/2 - 1} \e^{4 \gamma t_{2k}} h^{3} \norm{\hatY_{2k}^{f} - \hatY_{2k}^{c}}^{4} }^{p/4} \Bigg] \\
                &\leq (C_{\text{BDG}} p)^{p/4} d^{3p/4} K_{b}^{p/2} \abs{ \sum_{k = 0}^{N/2 - 1} \e^{4 \gamma t_{2k}} h }^{p/4 - 1} \sum_{k = 0}^{N/2 - 1} \e^{4 \gamma t_{2k}} h^{1 + p/2} \E \Bigg[ \norm{\hatY_{2k}^{f} - \hatY_{2k}^{c}}^{p} \Bigg] \\
                &\leq C_{J_{10}}^{p} p^{3p/4} h^{p} \e^{\gamma p T}.
            \end{split}
        \end{equation*}
        The terms $J_{12}$ and $J_{14}$ can be bounded in a similar way as the term $J_{10}$ to yield the bounds
        \begin{equation*}
            \begin{split}
                J_{12} &\coloneqq \E \Bigg[ \sup_{0 \leq n \leq N/2 - 1} \abs{ \sum_{k = 0}^{n} \e^{2 \gamma t_{2k}} \inner{\hatY_{2k}^{f} - \hatY_{2k}^{c}, \big( Da(\hatY_{2k}^{f}) - Da(\hatY_{2k}^{c}) \big) \Delta Z_{2k}} }^{p/2} \Bigg] \\
                &= \E \Bigg[ \sup_{0 \leq t \leq T} \abs{ \int_{0}^{t} \e^{2 \gamma \floor{s / 2h} h} \inner{\hatY_{(2 \floor{s / 2h})}^{f} - \hatY_{(2 \floor{s / 2h})}^{c}, \big( Da(\hatY_{(2 \floor{s / 2h})}^{f}) - Da(\hatY_{(2 \floor{s / 2h})}^{c}) \big) \di Z_{s}} }^{p/2} \Bigg] \\
                &\leq C_{J_{12}}^{p} p^{3p/4} h^{p} \e^{\gamma p T}, \\
                ~
                J_{14} &\coloneqq \E \Bigg[ \sup_{0 \leq n \leq N/2 - 1} \abs{ \sum_{k = 0}^{n} \e^{2 \gamma t_{2k}} \inner{\hatY_{2k}^{f} - \hatY_{2k}^{c}, \big( Da(\hatY_{2k}^{f}) - Da(\hatY_{2k}^{c}) \big) \Delta Z_{2k + 1}} }^{p/2} \Bigg] \\
                &\leq C_{J_{14}}^{p} p^{3p/4} h^{p} \e^{\gamma p T}.
            \end{split}
        \end{equation*}
        The term $J_{11}$, using the Burkholder--Davis--Gundy inequality, Jensen's inquality, and Young's inquality for any $\zeta > 0$, can be bounded as
        \begin{equation*}
            \begin{split}
                J_{11} &\coloneqq \E \Bigg[ \sup_{0 \leq n \leq N/2 - 1} \abs{\sum_{k = 0}^{n} \e^{2 \gamma t_{2k}} h \inner{\hatY_{2k}^{f} - \hatY_{2k}^{c}, Da(\hatY_{2k}^{f}) Da(\hatY_{2k}^{f}) \Delta Z_{2k}}}^{p/2} \Bigg] \\
                &= \E \Bigg[ \sup_{0 \leq t \leq T} \abs{ \int_{0}^{t} \e^{2 \gamma \floor{s / 2h} h} h \inner{\hatY_{(2 \floor{s / 2h})}^{f} - \hatY_{(2 \floor{s / 2h})}^{c}, Da(\hatY_{(2 \floor{s / 2h})}^{f}) Da(\hatY_{(2 \floor{s / 2h})}^{f}) \di Z_{s}} }^{p/2} \Bigg] \\
                &\leq \frac{1}{8 \zeta} \E \Bigg[ \sup_{0 \leq n \leq N/2} \norm{\hatY_{2n}^{f} - \hatY_{2n}^{c}}^{p} \Bigg] + 2 \zeta (C_{\text{BDG}} p)^{p/2} d^{2p} K_{b}^{2p} \E \Bigg[ \abs{\sum_{k = 0}^{N/2 - 1} \e^{2 \gamma t_{2k}} h^{5}}^{p/2} \Bigg] \\
                &\leq \frac{1}{8 \zeta} \E \Bigg[ \sup_{0 \leq n \leq N/2} \norm{\hatY_{2n}^{f} - \hatY_{2n}^{c}}^{p} \Bigg] + \zeta C_{J_{11}}^{p} p^{p/2} h^{2p} \e^{\gamma p T}.
            \end{split}
        \end{equation*}
        The term $J_{13}$ yields the bound
        \begin{equation*}
            \begin{split}
                J_{13} &\coloneqq \E \Bigg[ \sup_{0 \leq n \leq N/2 - 1} \abs{\sum_{k = 0}^{n} \e^{2 \gamma t_{2k}} h \inner{\hatY_{2k}^{f} - \hatY_{2k}^{c}, (Da(\hatY_{2k + 1}^{f}) - Da(\hatY_{2k}^{f})) \Delta Z_{2k + 1}}}^{p/2} \Bigg] \\
                &\leq (C_{\text{BDG}} p)^{p/4} d^{3p/4} K_{b}^{p/2} \E \Bigg[ \abs{\sum_{k = 0}^{N/2 - 1} \e^{4 \gamma t_{2k}} h^{5} \norm{\hatY_{2k}^{f} - \hatY_{2k}^{c}}^{2} \norm{\hatY_{2k + 1}^{f} - \hatY_{2k}^{f}}^{2}}^{p/4} \Bigg] \\
                &\leq C_{J_{13}}^{p} p^{3p/4} h^{3p/2} \e^{\gamma p T}.
            \end{split}
        \end{equation*}
        The term $J_{15}$, using the Cauchy--Schwarz inequality, Jensen's inequality, and Young's inequality for any $\zeta > 0$, can be bounded as
        \begin{equation*}
            \begin{split}
                J_{15} &\coloneqq \E \Bigg[ \abs{ \sum_{k = 0}^{N/2 - 1} \e^{2 \gamma t_{2k}} h^{2} \norm{\hatY_{2k}^{f} - \hatY_{2k}^{c}} \norm{\cA a(\hatY_{2k + 1}^{f}) - \cA a(\hatY_{2k}^{f})} }^{p/2} \Bigg] \\
                &\leq \hatL^{p/2} \E \Bigg[ \abs{\sup_{0 \leq n \leq N/2} \e^{\gamma t_{2n}} \norm{\hatY_{2n}^{f} - \hatY_{2n}^{c}}}^{p/2} \abs{\sum_{k = 0}^{N/2 - 1} \e^{\gamma t_{2k}} h^{2} \norm{\hatY_{2k + 1}^{f} - \hatY_{2k}^{f}} }^{p/2} \Bigg] \\
                &\leq \frac{1}{8 \zeta} \E \Bigg[ \sup_{0 \leq n \leq N/2} \e^{\gamma p t_{2n}} \norm{\hatY_{2n}^{f} - \hatY_{2n}^{c}}^{p} \Bigg] + 2 \zeta \hatL^{p} \E \Bigg[ \abs{\sum_{k = 0}^{N/2 - 1} \e^{\gamma t_{2k}} h^{2} \norm{\hatY_{2k + 1}^{f} - \hatY_{2k}^{f}} }^{p} \Bigg] \\
                &\leq \frac{1}{8 \zeta} \E \Bigg[ \sup_{0 \leq n \leq N/2} \e^{\gamma p t_{2n}} \norm{\hatY_{2n}^{f} - \hatY_{2n}^{c}}^{p} \Bigg] + \zeta C_{J_{15}}^{p} p^{p/2} h^{3p/2} \e^{\gamma p T}.
            \end{split}
        \end{equation*}
        The term $J_{16}$, using the Cauchy--Schwarz inequality, Jensen's inequality, and Young's inequality for any $\zeta > 0$, yields the bound
        \begin{equation*}
            \begin{split}
                J_{16} &\coloneqq \E \Bigg[ \sup_{0 \leq n \leq N/2 - 1} \abs{ \sum_{k = 0}^{n} \e^{2 \gamma t_{2k}} h^{2} \inner{\hatY_{2k}^{f} - \hatY_{2k}^{c}, \Delta a(\hatY_{2k}^{f})} }^{p/2} \Bigg] \\
                &\leq d^{3p/4} K_{b}^{p/2} \E \Bigg[ \abs{\sup_{0 \leq n \leq N/2} \e^{\gamma t_{2n}} \norm{\hatY_{2n}^{f} - \hatY_{2n}^{c}}}^{p/2} \abs{ \sum_{k = 0}^{N/2 - 1} \e^{\gamma t_{2k}} h^{2} }^{p/2} \Bigg] \\
                &\leq \frac{1}{8 \zeta} \E \Bigg[ \sup_{0 \leq n \leq N/2} \e^{\gamma p t_{2n}} \norm{\hatY_{2n}^{f} - \hatY_{2n}^{c}}^{p} \Bigg] + 2 \zeta d^{3p/2} K_{b}^{p} \E \Bigg[ \abs{ \sum_{k = 0}^{N/2 - 1} \e^{\gamma t_{2k}} h^{2} }^{p} \Bigg] \\
                &\leq \frac{1}{8 \zeta} \E \Bigg[ \sup_{0 \leq n \leq N/2} \e^{\gamma p t_{2n}} \norm{\hatY_{2n}^{f} - \hatY_{2n}^{c}}^{p} \Bigg] + \zeta C_{J_{16}}^{p} h^{p} \e^{\gamma p T}.
            \end{split}
        \end{equation*}
        Combining all the above estimates, choosing $\zeta = \widetilde{C}$ there exists a constant $\widetilde{C}_{2} > 0$ that is independent of $T$ and $p$, such that
        \begin{equation*}
            \begin{split}
                \E \Bigg[ \sup_{0 \leq n \leq N/2} \e^{\gamma p t_{2n}} \Bigg( \norm{\hatY_{2n}^{f} - \hatY_{2n}^{c}}^{p} \Bigg) \Bigg] \leq \widetilde{C}_{2}^{p} p^{p} h^{p} \e^{\gamma p T}.
            \end{split}
        \end{equation*}
        Since the above inequality holds for all $T > 0$, we have for all $0 < h < \kappa_{3}$,
        \begin{equation}
            \label{eq:diff_bound_2}
            \sup_{0 \leq n \leq N/2} \E \Big[ \norm{\hatY_{2n}^{f} - \hatY_{2n}^{c}}^{p} \Big] \leq \widetilde{C}_{2}^{p} p^{p} h^{p}.
        \end{equation}
        Note that since we have used equation~\eqref{eq:diff_bound_1} to obtain equation~\eqref{eq:diff_bound_2}. The bound obtained in equation~\eqref{eq:diff_bound_2} can be further sharpened through \textit{argumentum in recursum}.
        
        Using the Mean value theorem, we can write
        \begin{equation}
            \label{eq:second_order_R2}
            a(\hatY_{2n + 1}^{f}) - a(\hatY_{2n}^{f}) = Da(\hatY_{2n}^{f}) \Big( \hatY_{2n + 1}^{f} - \hatY_{2n}^{f} \Big) + \frac{1}{2} D^{2}a(\hatY_{2n}^{f}) \Big( \hatY_{2n + 1}^{f} - \hatY_{2n}^{f} \Big)^{2} + R_{3}
        \end{equation}
        where $\norm{R_{3}} \leq d^{2} K_{b} \norm{\hatY_{2n + 1}^{f} - \hatY_{2n}^{f}}^{3}$. Substituting equation~\eqref{eq:second_order_R2} in \eqref{eq:method2_diff_fc_1}, using H\"older's inequality, Young's inequality, and equation~\eqref{eq:pth_power_fine_diff_intermediate}, we obtain
        \begin{align*}
            \norm{\hatY_{2n + 2}^{f} - \hatY_{2n + 2}^{c}}^{2} &\leq (1 - 4 \gamma h) \norm{\hatY_{2n}^{f} - \hatY_{2n}^{c}}^{2} + C \Bigg\{ \Big( h^{2} \norm{\Delta W_{2n}}^{2} + \norm{\Delta Z_{2n}}^{2} \\
            &\quad + \norm{\Delta Z_{2n + 1}}^{2} \Big) \norm{\hatY_{2n}^{f} - \hatY_{2n}^{c}}^{2} + h^{2} \norm{\Delta Z_{2n}}^{2} + h^{4} \Big( 1 + \norm{\hatY_{2n}^{f}}^{2} \Big) \\
            &\quad + \Big( h^{4} + \norm{\Delta Z_{2n + 1}}^{2} \Big) \norm{\hatY_{2n + 1}^{f} - \hatY_{2n}^{f}}^{2} + h^{2} \norm{R_{3}}^{2} \Bigg\} \\
            &\quad
            + 2 (1 - 4Sh + 2S^{2}h^{2}) \Bigg\{ h \inner{\hatY_{2n}^{f} - \hatY_{2n}^{c}, R_{3}} \\
            &\quad + h \inner{\hatY_{2n}^{f} - \hatY_{2n}^{c}, \big( Da(\hatY_{2n}^{f}) - Da(\hatY_{2n}^{c}) \big) \Delta W_{2n}} \\
            &\quad + h \inner{\hatY_{2n}^{f} - \hatY_{2n}^{c}, Da(\hatY_{2n}^{f}) Da(\hatY_{2n}^{f}) \Delta Z_{2n}} \\
            &\quad + (1 - Sh) \inner{\hatY_{2n}^{f} - \hatY_{2n}^{c}, \big( Da(\hatY_{2n}^{f}) - Da(\hatY_{2n}^{c}) \big) \Delta Z_{2n}} \\
            &\quad + \inner{\hatY_{2n}^{f} - \hatY_{2n}^{c}, \big( Da(\hatY_{2n + 1}^{f}) - Da(\hatY_{2n}^{f}) \big) \Delta Z_{2n + 1}} \\
            &\quad + \inner{\hatY_{2n}^{f} - \hatY_{2n}^{c}, \big( Da(\hatY_{2n}^{f}) - Da(\hatY_{2n}^{c}) \big) \Delta Z_{2n + 1}} \\
            &\quad + \frac{h^{2}}{2} \inner{\hatY_{2n}^{f} - \hatY_{2n}^{c}, \cA a(\hatY_{2n + 1}^{f}) - \cA a(\hatY_{2n}^{f})} + \\
            &\quad + \frac{h}{2} \inner{\hatY_{2n}^{f} - \hatY_{2n}^{c}, D^{2}a(\hatY_{2n}^{f}) \big( \hatY_{2n + 1}^{f} - \hatY_{2n}^{f} \big)^{2} - h \Delta a(\hatY_{2n}^{f})} \Bigg\}.
        \end{align*}
        Observe that the order of convergence of the terms $J_{9}, J_{10}, J_{12}, J_{14}$, and $J_{16}$ can be improved using equation~\eqref{eq:diff_bound_2} and \eqref{eq:second_order_R2}.
        The term $J_{9}$ using the Cauchy--Schwarz inequality, and Young's inequality for any $\zeta > 0$, then yields the bound
        \begin{equation*}
            \begin{split}
                J_{9} &\coloneqq \E \Bigg[ \sup_{0 \leq n \leq N/2 - 1} \abs{ \sum_{k = 0}^{n} \e^{2 \gamma t_{2k}} h \inner{\hatY_{2k}^{f} - \hatY_{2k}^{c}, R_{3}} }^{p/2} \Bigg] \\
                &\leq \E \Bigg[ \abs{ \sup_{0 \leq n \leq N/2} \e^{\gamma t_{2n}} \norm{\hatY_{2n}^{f} - \hatY_{2n}^{c}} }^{p/2} \abs{ \sum_{k = 0}^{N/2 - 1} \e^{\gamma t_{2k}} h \norm{\hatY_{2k + 1}^{f} - \hatY_{2k}^{f}}^{3} }^{p/2} \Bigg] \\
                &\leq \frac{1}{8 \zeta} \E \Bigg[ \sup_{0 \leq n \leq N/2} \e^{\gamma p t_{2n}} \norm{\hatY_{2n}^{f} - \hatY_{2n}^{c}}^{p} \Bigg] + \zeta C_{J_{9}}^{p} p^{3p/2} h^{3p/2} \e^{\gamma p T}.
            \end{split}
        \end{equation*}
        Using equation~\eqref{eq:diff_bound_2}, the bounds for the terms $J_{10}, J_{12}$ and $J_{14}$ can be improved to
        \begin{equation*}
            J_{10} \leq C_{J_{10}}^{p} p^{5p/4} h^{3p/2} \e^{\gamma p T}, \qquad J_{12} \leq C_{J_{12}}^{p} p^{5p/4} h^{3p/2} \e^{\gamma p T}, \quad J_{14} \leq C_{J_{14}}^{p} p^{5p/4} h^{3p/2} \e^{\gamma p T}.
        \end{equation*}
        The term $J_{16}$, using equation~\eqref{eq:second_order_R2}, can be redefined as 
        \begin{equation*}
            J_{16} \coloneqq \E \Bigg[ \sup_{0 \leq n \leq N/2 - 1} \abs{ \sum_{k = 0}^{n} \e^{2 \gamma t_{2k}} h \inner{\hatY_{2n}^{f} - \hatY_{2n}^{c}, D^{2}a(\hatY_{2n}^{f}) \Big( \hatY_{2n + 1}^{f} - \hatY_{2n}^{f} \Big)^{2} - h \Delta a(\hatY_{2n}^{f})} }^{p/2} \Bigg].
        \end{equation*}
        We can express the term $D^{2}a(\hatY_{2n}^{f}) \Big( \hatY_{2n + 1}^{f} - \hatY_{2n}^{f} \Big)^{2}$ as
        \begin{equation}
            \label{eq:2nd_order_derivative_decomp}
            \begin{split}
                D^{2}a(\hatY_{2n}^{f}) \Big( \hatY_{2n + 1}^{f} - \hatY_{2n}^{f} \Big)^{2} &= D^{2}a(\hatY_{2n}^{f}) \Delta W_{2n}^{2} + D^{2}a(\hatY_{2n}^{f}) Q_{2n}^{2} + 2 D^{2}a(\hatY_{2n}^{f}) Q_{2n} \Delta W_{2n},
            \end{split}
        \end{equation}
        where \[ Q_{2n} \coloneqq Sh (\hatY_{2n}^{c} - \hatY_{2n}^{f}) + h a(\hatY_{2n}^{f}) + Da(\hatY_{2n}^{f}) \Delta Z_{2n} + \frac{h^{2}}{2} \cA a(\hatY_{2n}^{f}), \]
        and 
        \begin{equation}
            \label{eq:fine_diff_Q} \E \Big[ \norm{Q_{2n}}^{p} \Big] \leq C_{Q}^{p} p^{p/2} h^{p}
        \end{equation}
        where $C_{Q} > 0$ is a constant that depends only on $S, d$, and the constants from Assumptions~\ref{asmp:lipschitz_a}, \ref{asmp:aD_lipschitz}, and \ref{asmp:dissipativity_a}. Using equation~\eqref{eq:2nd_order_derivative_decomp}, we can express $J_{16}$ as \[ J_{16} \leq 3^{p/2 - 1} 2^{p/2} (\hatJ_{1} + \hatJ_{2} + \hatJ_{3} ), \]
        where $\hatJ_{1}, \hatJ_{2}$ and $\hatJ_{3}$ are defined as
        \begin{equation*}
            \begin{split}
                \hatJ_{1} &\coloneqq \E \Bigg[ \sup_{0 \leq n \leq N/2 - 1} \abs{\sum_{k = 0}^{n} \e^{2 \gamma t_{2k}} h \inner{\hatY_{2k}^{f} - \hatY_{2k}^{c}, D^{2}a(\hatY_{2k}^{f}) \Big( \Delta W_{2k}^{2} - \E \Big[ \Delta W_{2k}^{2} \Big] \Big)}}^{p/2} \Bigg], \\
                \hatJ_{2} &\coloneqq \E \Bigg[ \abs{\sum_{k = 0}^{N/2 - 1} \e^{2 \gamma t_{2k}} h \norm{\hatY_{2k}^{f} - \hatY_{2k}^{c}} \norm{D^{2}a(\hatY_{2k}^{f}) Q_{2k}^{2}}}^{p/2} \Bigg], \\
                \hatJ_{3} &\coloneqq \E \Bigg[ \sup_{0 \leq n \leq N/2 - 1} \abs{\sum_{k = 0}^{n} \e^{2 \gamma t_{2k}} h \inner{\hatY_{2k}^{f} - \hatY_{2k}^{c}, D^{2}a(\hatY_{2k}^{f}) Q_{2k} \Delta W_{2k}}}^{p/2} \Bigg].
            \end{split}
        \end{equation*}
        To bound the term $\hatJ_{1}$, we define the It\^o process $\di P_{1}(s)$ in $\bR^{d \times d}$, for all $s \in [0, T]$, such that
        \begin{equation*}
            \begin{split}
                \di P_{1}^{ij}(s) &= \begin{cases}
                    2 \Big( V_{1}^{i}(s) - V_{1}^{i} \big( 2 \floor{s / 2h} h \big) \Big) \di V_{1}^{i}(s), & \text{if } i = j \\
                    \Big( V_{1}^{i}(s) - V_{1}^{i} \big( 2 \floor{s / 2h} h \big) \Big) \di V_{1}^{j}(s) + \Big( V_{1}^{j}(s) - V_{1}^{j} \big( 2 \floor{s / 2h} h \big) \Big) \di V_{1}^{i}(s), & \text{if } i \neq j
                \end{cases}, \\
                P_{1}^{ij}(0) &= 0, 
            \end{split}
        \end{equation*}
        for all $i, j = 1, 2, \ldots, d$. The process $P_{1}$ is a Martingale process \cite[Definition 4.75]{arguin2021first}. Using the Burkholder--Davis--Gundy inequality, Jensen's inequality, and Young's inequality for any $\zeta > 0$, we have
        \begin{equation*}
            \begin{split}
                \hatJ_{1} &= \E \Bigg[ \sup_{0 \leq t \leq T} \abs{\int_{0}^{t} \e^{2 \gamma \floor{s / 2h} h} h \inner{\hatY_{2 \floor{s / 2h} }^{f} - \hatY_{2 \floor{s / 2h} }^{c}, D^{2}a(\hatY_{2 \floor{s / 2h} }^{f}) \di P_{1} (s)}}^{p/2} \Bigg] \\
                &\leq (2 C_{\text{BDG}} p)^{p/4} d^{3p/4} K_{b}^{p/2}  \E \Bigg[ \abs{\sup_{0 \leq n \leq N/2} \e^{2 \gamma t_{2n}} \norm{\hatY_{2n}^{f} - \hatY_{2n}^{c}}^{2}}^{p/4} \abs{\sum_{k = 0}^{N/2 - 1} \e^{2 \gamma t_{2k}} h^{4} } ^{p/4} \Bigg] \\
                &\leq \frac{1}{8 \zeta} \E \Bigg[ \sup_{0 \leq n \leq N/2} \e^{\gamma p t_{2n}} \norm{\hatY_{2n}^{f} - \hatY_{2n}^{c}}^{p} \Bigg] + C_{\hatJ_{1}}^{p} p^{p/2} h^{3p/2} \e^{\gamma p T}.
            \end{split}
        \end{equation*}
        For the term $\hatJ_{2}$, we have
        \begin{equation*}
            \begin{split}
                \hatJ_{2} &\leq d^{3p/4} K_{b}^{p/2} \E \Bigg[ \abs{ \sum_{k = 0}^{N/2 - 1} \e^{2 \gamma t_{2k}} h \norm{\hatY_{2k}^{f} - \hatY_{2k}^{c}} \norm{Q_{2k}}^{2} }^{p/2} \Bigg] \\
                &\leq d^{3p/4} K_{b}^{p/2} \E \Bigg[ \abs{\sup_{0 \leq n \leq N/2} \e^{\gamma t_{2k}} \norm{\hatY_{2n}^{f} - \hatY_{2n}^{c}}}^{p/2} \abs{ \sum_{k = 0}^{N/2 - 1} \e^{\gamma t_{2k}} h \norm{Q_{2k}}^{2} }^{p/2} \Bigg] \\
                &\leq \frac{1}{8 \zeta} \E \Bigg[ \sup_{0 \leq n \leq N/2} \e^{\gamma p t_{2k}} \norm{\hatY_{2n}^{f} - \hatY_{2n}^{c}}^{p} \Bigg] + \zeta C_{\hatJ_{2}}^{p} p^{p} h^{2p} \e^{\gamma p T}.
            \end{split}
        \end{equation*}
        To bound the term $\hatJ_{3}$, we rewrite it as
        \begin{equation*}
            \begin{split}
                \hatJ_{3} \leq 2^{p/2 - 1} \Big( \widetilde{J}_{1} + \widetilde{J}_{2} \Big),
            \end{split}
        \end{equation*}
        where
        \begin{equation*}
            \begin{split}
                \widetilde{J}_{1} &\coloneqq \E \Bigg[ \sup_{0 \leq n \leq N/2 - 1} \abs{\sum_{k = 0}^{n} \e^{2 \gamma t_{2k}} h \inner{\hatY_{2k}^{f} - \hatY_{2k}^{c}, D^{2} a(\hatY_{2k}^{f}) \widetilde{Q}_{2k} \Delta W_{2k} }}^{p/2} \Bigg], \\
                \widetilde{J}_{2} &\coloneqq \E \Bigg[ \sup_{0 \leq n \leq N/2 - 1} \abs{\sum_{k = 0}^{n} \e^{2 \gamma t_{2k}} h \inner{\hatY_{2k}^{f} - \hatY_{2k}^{c}, D^{2} a(\hatY_{2k}^{f}) \big( Da(\hatY_{2k}^{f}) \Delta Z_{2k} \big) \Delta W_{2k} }}^{p/2} \Bigg],
            \end{split}
        \end{equation*}
        and
        \[ \widetilde{Q}_{2k} \coloneqq Sh (\hatY_{2k}^{c} - \hatY_{2k}^{f}) + h a(\hatY_{2k}^{f}) + \frac{h^{2}}{2} \cA a(\hatY_{2k}^{f}), \] where \[ \E \bigg[ \norm{\widetilde{Q}_{2k}}^{p} \bigg] \leq C_{Q}^{p} p^{p/2} h^{p}. \]
        For the term $\widetilde{J}_{1}$, using the Burkholder--Davis--Gundy inequality, Jensen's inequality, and Young's inequality for any $\zeta > 0$, we obtain
        \begin{equation*}
            \begin{split}
                \widetilde{J}_{1} &= \E \Bigg[ \sup_{0 \leq t \leq T} \abs{\int_{0}^{t} \e^{2 \gamma \floor{s / 2h} h} h \inner{\hatY_{2 \floor{s / 2h}}^{f} - \hatY_{2 \floor{s / 2h}}^{c}, D^{2} a(\hatY_{2 \floor{s / 2h}}^{f}) \widetilde{Q}_{2 \floor{s / 2h}} \di W_{s} }}^{p/2} \Bigg] \\
                &\leq (C_{\text{BDG}} p)^{p/4} \E \Bigg[ \abs{\sum_{k = 0}^{N/2 - 1} \e^{4 \gamma t_{2k}} h^{3} \norm{\hatY_{2k}^{f} - \hatY_{2k}^{c}}^{2} \norm{D^{2} a(\hatY_{2k}^{f})}^{2} \norm{ \widetilde{Q}_{2k} }^{2}}^{p/4} \Bigg] \\
                &\leq (C_{\text{BDG}} p)^{p/4} d^{3p/4} K_{b}^{p/2} \E \Bigg[ \abs{\sup_{0 \leq n \leq N/2} \e^{2 \gamma t_{2n}} \norm{\hatY_{2n}^{f} - \hatY_{2n}^{c}}^{2}}^{p/4} \abs{\sum_{k = 0}^{N/2 - 1} \e^{2 \gamma t_{2k}} h^{3} \norm{ \widetilde{Q}_{2k} }^{2}}^{p/4} \Bigg] \\
                &\leq \frac{1}{8 \zeta} \E \Bigg[ \sup_{0 \leq n \leq N/2} \e^{\gamma p t_{2n}} \norm{\hatY_{2n}^{f} - \hatY_{2n}^{c}}^{p} \Bigg] + \zeta C_{\widetilde{J}_{1}}^{p} p^{p} h^{2p} \e^{\gamma p T}.
            \end{split}
        \end{equation*}
        Using, the H\"older's inequality, Jensen's inequality, Young's inequality for any $\zeta > 0$, we obtain
        \begin{equation*}
            \begin{split}
                \widetilde{J}_{2} &\leq \E \Bigg[ \abs{\sum_{k = 0}^{N/2 - 1} \e^{2 \gamma t_{2k}} h \norm{\hatY_{2k}^{f} - \hatY_{2k}^{c}} \norm{D^{2} a(\hatY_{2k}^{f}) \big( Da(\hatY_{2k}^{f}) \Delta Z_{2k} \big) \Delta W_{2k}}}^{p/2} \Bigg] \\
                &\leq d^{5p/4} K_{b}^{p} \E \Bigg[ \abs{\sup_{0 \leq n \leq N/2} \e^{\gamma t_{2n}} \norm{\hatY_{2n}^{f} - \hatY_{2n}^{c}} }^{p/2} \abs{\sum_{k = 0}^{N/2 - 1} \e^{\gamma t_{2k}} h \norm{\Delta Z_{2k}} \norm{\Delta W_{2k}}}^{p/2} \Bigg] \\
                &\leq \frac{1}{8 \zeta} \E \Bigg[ \sup_{0 \leq n \leq N/2} \e^{\gamma p t_{2n}} \norm{\hatY_{2n}^{f} - \hatY_{2n}^{c}}^{p} \Bigg] + \zeta C_{\widetilde{J}_{2}}^{p} p^{p} h^{2p} \e^{\gamma p T}.
            \end{split}
        \end{equation*}
        Combining all the estimates together, there exists a constant $\widetilde{C}_{3} > 0$, independent of $T$ and $p$, such that
        \begin{equation*}
            \E \Bigg[ \sup_{0 \leq n \leq N/2} \e^{\gamma p t_{2n}} \norm{\hatY_{2n}^{f} - \hatY_{2n}^{c}}^{p} \Bigg] \leq \widetilde{C}_{3}^{p} p^{3p/2} h^{3p/2} \e^{\gamma p T}.
        \end{equation*}
        As the above inequality holds for any $T > 0$, we have
        \begin{equation}
            \label{eq:diff_bound_3}
            \sup_{0 \leq n \leq N/2} \E \Bigg[ \norm{\hatY_{2n}^{f} - \hatY_{2n}^{c}}^{p} \Bigg] \leq \widetilde{C}_{3}^{p} p^{3p/2} h^{3p/2}.
        \end{equation}
        Choosing a constant $\overline{C}_{1} \coloneqq \max \big( \widetilde{C}_{1}, \; \widetilde{C}_{2}, \; \widetilde{C}_{3} \big)$, for all sufficiently small $0 < h < \kappa_{1}$, we have
        \begin{equation}
            \label{eq:diff_bound_even}
            \sup_{0 \leq n \leq N/2} \E \Bigg[ \norm{\hatY_{2n}^{f} - \hatY_{2n}^{c}}^{p} \Bigg] \leq \overline{C}_{1}^{p} \min \Big( p^{p/2} h^{p/2}, \; p^{p} h^{p}, \; p^{3p/2} h^{3p/2} \Big).
        \end{equation}
        Similarly, using equations~\eqref{eq:fine_odd} and \eqref{eq:coarse_odd}, we can write the difference between the fine and the coarse trajectories at the odd time-step as
        \begin{equation*}
            \begin{split}
                \hatY_{2n + 1}^{f} - \hatY_{2n + 1}^{c} &= (1 - 2Sh) (\hatY_{2n}^{f} - \hatY_{2n}^{c}) + h (a(\hatY_{2n}^{f}) - a(\hatY_{2n}^{c})) + \hatI_{2n},
            \end{split}
        \end{equation*}
        where \[ \hatI_{2n} \coloneqq (Da(\hatY_{2n}^{f}) - Da(\hatY_{2n}^{c})) \Delta Z_{2n} + \frac{h^{2}}{2} ( \cA a(\hatY_{2n}^{f}) - \cA a(\hatY_{2n}^{c}) ). \]
        Taking the Euclidean norm of the difference and squaring it, using Young's inequality, and Assumptions~\ref{asmp:lipschitz_a}, \ref{asmp:aD_lipschitz}, \ref{asmp:dissipativity_a}, and for all sufficiently small $h > 0$, we have
        \begin{equation*}
            \begin{split}
                \norm{\hatY_{2n + 1}^{f} - \hatY_{2n + 1}^{c}}^{2} &\leq \Big\{ 2 - 6Sh + (2 \lambda + d K_{b}^{2} ) h + C h^{2} \Big\} \norm{\hatY_{2n}^{f} - \hatY_{2n}^{c}}^{2} + 3 \norm{\hatI_{2n}}^{2} \\
                ~
                &\leq C \Big( \norm{\hatY_{2n}^{f} - \hatY_{2n}^{c}}^{2} + \norm{\hatY_{2n}^{f} - \hatY_{2n}^{c}}^{2} \norm{\Delta Z_{2n}}^{2} \Big).
            \end{split}
        \end{equation*}
        Raising both sides of the above inequality to the power $p/2$, and taking the expectation, we obtain
        \begin{equation*}
            \begin{split}
                \E \Big[ \norm{\hatY_{2n + 1}^{f} - \hatY_{2n + 1}^{c}}^{p}  \Big] &\leq 2^{p/2 - 1} C^{p/2} \Big( \E \Big[ \norm{\hatY_{2n}^{f} - \hatY_{2n}^{c}}^{p} \Big] + \E \Big[ \norm{\hatY_{2n}^{f} - \hatY_{2n}^{c}}^{p} \norm{\Delta Z_{2n}}^{p} \Big] \Big).
            \end{split}
        \end{equation*}
        Taking the supremum on both sides of the above inequality, there exist constants $\kappa_{4}, \kappa_{5} > 0$ independent of $T$ and $p$, for all $0 < h < \min(\kappa_{3}, \kappa_{4} / \sqrt[3]{p})$,
        \begin{equation*}
            \sup_{0 \leq n \leq N} \E \Big[ \norm{\hatY_{t_{n}}^{f} - \hatY_{t_{n}}^{c}}^{p} \Big] \leq \kappa_{5} \min \Big( p^{p/2} h^{p/2}, \; p^{p} h^{p}, \; p^{3p/2} h^{3p/2} \Big).
        \end{equation*}

    \subsubsection*{Proof of Theorem~\ref{thm:radon_nikodym}}
        \allowdisplaybreaks
        \begingroup
            For the sake of brevity, we show the Radon--Nikodym derivative bound only for the fine trajectory. The proof argument can be extended similarly for the coarse trajectory. Recall that the $p-$th power of the  Radon--Nikodym derivative for the fine trajectory is given by
            \begin{equation*}
                \begin{split}
                    \E \bigg[ \abs{R_{T}^{f}}^{p} \bigg] &= \E \Bigg[ \exp \Bigg( - p S \sum_{n = 0}^{N - 1} \inner{\hatY_{n}^{f} - \hatY_{n}^{c}, \Delta V_{1, n}} + \sqrt{3} p S \sum_{n = 0}^{N - 1} \inner{\hatY_{n}^{f} - \hatY_{n}^{c}, \Delta V_{2, n}} \\
                    &\qquad - 2 p S^{2} h \sum_{n = 0}^{N - 1} \norm{\hatY_{n}^{f} - \hatY_{n}^{c}}^{2} \Bigg) \Bigg] \\
                    ~
                    &= \E \Bigg[ \exp \Bigg( - p S \sum_{n = 0}^{N - 1} \inner{\hatY_{n}^{f} - \hatY_{n}^{c}, \Delta V_{1, n}} + \sqrt{3} p S \sum_{n = 0}^{N - 1} \inner{\hatY_{n}^{f} - \hatY_{n}^{c}, \Delta V_{2, n}} \\
                    &\quad - 4 p^{2} S^{2} h \sum_{n = 0}^{N - 1} \norm{\hatY_{n}^{f} - \hatY_{n}^{c}}^{2} + 2 (2p - 1) p S^{2} h \sum_{n = 0}^{N - 1} \norm{\hatY_{n}^{f} - \hatY_{n}^{c}}^{2} \Bigg) \Bigg].
                \end{split}
            \end{equation*}
            Using H\"older's inequality, we can write
            \begin{equation*}
                \E \bigg[ \abs{R_{T}^{f}}^{p} \bigg] \leq J_{1}^{1/2} J_{2}^{1/2},
            \end{equation*}
            where
            \begin{equation*}
                \begin{split}
                    J_{1} &\coloneqq \E \Bigg[ \exp \Bigg( 4 (2p - 1) p S^{2} h \sum_{n = 0}^{N - 1} \norm{\hatY_{n}^{f} - \hatY_{n}^{c}}^{2} \Bigg) \Bigg], \\
                    J_{2} &\coloneqq \E \Bigg[ \exp \Bigg( - 2 p S \sum_{n = 0}^{N - 1} \inner{\hatY_{n}^{f} - \hatY_{n}^{c}, \Delta V_{1, n}} + 2 \sqrt{3} p S \sum_{n = 0}^{N - 1} \inner{\hatY_{n}^{f} - \hatY_{n}^{c}, \Delta V_{2, n}} \\
                    &\quad - 8 p^{2} S^{2} h \sum_{n = 0}^{N - 1}  \norm{\hatY_{n}^{f} - \hatY_{n}^{c}}^{2} \Bigg) \Bigg] \leq 1.
                \end{split}
            \end{equation*}
            since the term $J_{2}$ is a super-martingale. Using H\"older's inequality, the term $J_{1}$ can be bounded as
            \begin{equation*}
                \begin{split}
                    J_{1} &\leq \E \Bigg[ \exp \Bigg( 8 p^{2} S^{2} h \sum_{n = 0}^{N - 1} \norm{\hatY_{n}^{f} - \hatY_{n}^{c}}^{2} \Bigg) \Bigg] = \E \Bigg[ \prod_{n = 0}^{N - 1} \exp \Bigg( 8 p^{2} S^{2} h \norm{\hatY_{n}^{f} - \hatY_{n}^{c}}^{2} \Bigg) \Bigg] \\
                    ~
                    &\leq \prod_{n = 0}^{N - 1} \E \Bigg[ \exp \Bigg( 8 p^{2} S^{2} T \norm{\hatY_{n}^{f} - \hatY_{n}^{c}}^{2} \Bigg) \Bigg]^{1 / N} = \prod_{n = 0}^{N/2 - 1} (\hatJ_{2n + 2})^{1/N} \prod_{n = 0}^{N/2 - 1} (\hatJ_{2n + 1})^{1/N},
                \end{split}
            \end{equation*}
            where
            \begin{equation}
                \label{eq:hatJ_terms_odd_even}
                \begin{split}
                    \hatJ_{2n + 2} &\coloneqq \E \Bigg[ \exp \Bigg( 8 p^{2} S^{2} T \norm{\hatY_{2n + 2}^{f} - \hatY_{2n + 2}^{c}}^{2} \Bigg) \Bigg], \\
                    \hatJ_{2n + 1} &\coloneqq \E \Bigg[ \exp \Bigg( 8 p^{2} S^{2} T \norm{\hatY_{2n + 1}^{f} - \hatY_{2n + 1}^{c}}^{2} \Bigg) \Bigg].
                \end{split}
            \end{equation}
            To bound the term $\hatJ_{2n + 2}$, we define the set
            \begin{equation}
                \label{eq:set_theta}
                \Theta_{2n} \coloneqq \Bigg\{ \omega \in \Omega \; \bigg\vert \; \norm{\hatY_{2n}^{f} - \hatY_{2n}^{c}} \geq \nu_{1} \abs{\log h} \Bigg\},
            \end{equation}
            where $\nu_{1} > 0$ is a constant independent of $T$. Using Markov's inequality, and the result from Theorem~\ref{thm:convergence}, we obtain
            \begin{equation*}
                \begin{split}
                    \Prob \big( \Theta_{2n} \big) \leq \frac{\E \Bigg[ \norm{\hatY_{2n}^{f} - \hatY_{2n}^{c}}^{q} \Bigg]}{\nu_{1}^{q} \abs{\log h}^q} \leq \nu_{2}^{q} \nu_{1}^{-q} q^{q/2} h^{q/2} \abs{\log h}^{-q},
                \end{split}
            \end{equation*}
            where $\nu_{1}, \nu_{2} > 0$ are constants independent of $T$. Using the law of total expectation, we obtain
            \begin{equation*}
                \begin{split}
                    & \E \Bigg[ \exp \bigg( 8 p^{2} S^{2} T \norm{\hatY_{2n + 2}^{f} - \hatY_{2n + 2}^{c}}^{2} \bigg) \Bigg] \\
                    = & \underbrace{\E \Bigg[ \ind_{\Theta_{2n}} \exp \bigg( 8 p^{2} S^{2} T \norm{\hatY_{2n + 2}^{f} - \hatY_{2n + 2}^{c}}^{2} \bigg) \Bigg]}_{\eqqcolon \hatI_{1}} + \underbrace{\E \Bigg[ \ind_{\Theta_{2n}^{\comp}} \exp \bigg( 8 p^{2} S^{2} T \norm{\hatY_{2n + 2}^{f} - \hatY_{2n + 2}^{c}}^{2} \bigg) \Bigg]}_{\eqqcolon \hatI_{2}}.
                \end{split}
            \end{equation*}
            We begin the analysis by bounding the term $\hatI_{1}$. For all sufficiently small $h > 0$, using H\"older's inequality with the H\"older exponent $\bar{p} \coloneqq h^{-1} \abs{\log h} \implies \bar{q} \coloneqq \abs{\log h} / \big( \abs{\log h} - h \big)$, we obtain
            \begin{equation*}
                \begin{split}
                    \hatI_{1} &= \E \Bigg[ \ind_{\Theta_{2n}} \exp \bigg( 8 p^{2} S^{2} T \norm{\hatY_{2n + 2}^{f} - \hatY_{2n + 2}^{c}}^{2} \bigg) \Bigg] \\
                    &\leq \E \Big[ \ind_{\Theta_{2n}}^{\bar{p}} \Big]^{\frac{1}{\bar{p}}} \E \Bigg[ \exp \bigg( 8 p^{2} S^{2} T \bar{q} \norm{\hatY_{2n + 2}^{f} - \hatY_{2n + 2}^{c}}^{2} \bigg) \Bigg]^{\frac{1}{\bar{q}}} \\
                    &\leq \bigg( \nu_{2}^{q} \nu_{1}^{-q} q^{q/2} h^{q/2} \abs{\log h}^{-q} \bigg)^{\frac{h}{\abs{\log h}}} \E \Bigg[ \exp \bigg( 8 p^{2} S^{2} T \frac{\abs{\log h}}{\abs{\log h} - h} \norm{\hatY_{2n + 2}^{f} - \hatY_{2n + 2}^{c}}^{2} \bigg) \Bigg]^{1 - \frac{h}{\abs{\log h}}}.
                \end{split}
            \end{equation*}
            Using equation~\eqref{eq:fc_even_diff}, and Young's inequality, we have
            \begin{equation}
                \label{eq:fc_diff_radon_nikodym}
                \begin{split}
                    \norm{\hatY_{2n + 2}^{f} - \hatY_{2n + 2}^{c}}^{2} &\leq \Big( 1 - 4 \big( 2S - \lambda \big) h + C h \abs{\log h}^{-1} \Big) \norm{\hatY_{2n}^{f} - \hatY_{2n}^{c}}^{2} \\
                    & + C \Bigg\{ h^{3} \abs{\log h} \bigg( 1 + \norm{\hatY_{2n}^{f}}^{2} + \norm{\hatY_{2n}^{c}}^{2} \bigg) \\
                    & + h \abs{\log h} \norm{\Delta W_{2n}}^{2} + \norm{\Delta Z_{2n}}^{2} + \norm{\Delta Z_{2n + 1}}^{2} \Bigg\} \\
                    & + 2 (1 - Sh) ( 1 - 4Sh + 2 S^{2} h^{2}) \inner{\hatY_{2n}^{f} - \hatY_{2n}^{c}, (Da(\hatY_{2n}^{f}) - Da(\hatY_{2n}^{c})) \Delta Z_{2n}} \\
                    & + 2 ( 1 - 4Sh + 2 S^{2} h^{2}) \inner{\hatY_{2n}^{f} - \hatY_{2n}^{c}, (Da(\hatY_{2n + 1}^{f}) - Da(\hatY_{2n}^{f})) \Delta Z_{2n + 1}} \\
                    & + 2 ( 1 - 4Sh + 2 S^{2} h^{2}) \inner{\hatY_{2n}^{f} - \hatY_{2n}^{c}, (Da(\hatY_{2n}^{f}) - Da(\hatY_{2n}^{c})) \Delta Z_{2n + 1}} \\
                    & - 2 h ( 1 - 4Sh + 2 S^{2} h^{2}) \inner{\hatY_{2n}^{f} - \hatY_{2n}^{c}, Da(\hatY_{2n}^{c}) \Delta W_{2n}},
                \end{split}
            \end{equation}
            where $C > 0$ is a constant that depends only on $S, d$ and the constants from Assumptions \ref{asmp:lipschitz_a}, \ref{asmp:aD_lipschitz}, and \ref{asmp:dissipativity_a}. Note that the constant $C$ may change from line to line in the proof argument.

            Note that for all $0 < h < w(2) / 2, \; \; \frac{\abs{\log h}}{\abs{\log h} - h} \leq 2$, where $w(x)$ denotes the product logarithm function. Using equation~\eqref{eq:fc_diff_radon_nikodym}, the tower property of expectations, for all sufficiently small $h > 0$, there exists a $\gamma_{1} \in (0, 2S - \lambda)$, such that the following inequality holds
            \begin{equation*}
                \begin{split}
                    & \E \Bigg[ \exp \bigg( 8 p^{2} S^{2} T \frac{\abs{\log h}}{\abs{\log h} - h} \norm{\hatY_{2n + 2}^{f} - \hatY_{2n + 2}^{c}}^{2} \bigg) \Bigg] \\
                    ~
                    \leq & \E \Bigg[ \exp \Bigg( 8 p^{2} S^{2} T \bigg( (1 - 2 \gamma_{1} h) \norm{\hatY_{2n}^{f} - \hatY_{2n}^{c}}^{2} \\
                    &\quad + C \Bigg\{ h^{3} \abs{\log h} \bigg( 1 + \norm{\hatY_{2n}^{f}}^{2} + \norm{\hatY_{2n}^{c}}^{2} \bigg) + h \abs{\log h} \norm{\Delta W_{2n}}^{2} + \norm{\Delta Z_{2n}}^{2} \Bigg\} \\
                    &\quad + 2 \frac{\abs{\log h}}{\abs{\log h} - h} (1 - Sh) ( 1 - 4Sh + 2 S^{2} h^{2}) \inner{\hatY_{2n}^{f} - \hatY_{2n}^{c}, (Da(\hatY_{2n}^{f}) - Da(\hatY_{2n}^{c})) \Delta Z_{2n}} \\
                    &\quad - 2 h \frac{\abs{\log h}}{\abs{\log h} - h} ( 1 - 4Sh + 2 S^{2} h^{2}) \inner{\hatY_{2n}^{f} - \hatY_{2n}^{c}, Da(\hatY_{2n}^{c}) \Delta W_{2n}} \bigg) \Bigg) \times \tildeJ_{1} \Bigg],
                \end{split}
            \end{equation*}
            where $\tildeJ_{1}$ is defined as
            \begin{equation*}
                \begin{split}
                    \tildeJ_{1} &\coloneqq \E \Bigg[ \exp \Bigg( 8 p^{2} S^{2} T \frac{\abs{\log h}}{\abs{\log h} - h} \\
                    &\qquad \qquad \bigg( 2 ( 1 - 4Sh + 2 S^{2} h^{2}) \inner{\hatY_{2n}^{f} - \hatY_{2n}^{c}, (Da(\hatY_{2n + 1}^{f}) - Da(\hatY_{2n}^{f})) \Delta Z_{2n + 1}} \\
                    &\qquad \qquad + 2 ( 1 - 4Sh + 2 S^{2} h^{2}) \inner{\hatY_{2n}^{f} - \hatY_{2n}^{c}, (Da(\hatY_{2n}^{f}) - Da(\hatY_{2n}^{c})) \Delta Z_{2n + 1}} \\
                    &\qquad \qquad + C \norm{\Delta Z_{2n + 1}}^{2} \bigg) \Bigg) \; \bigg\vert \; \cF_{t_{2n + 1}} \Bigg].
                \end{split}
            \end{equation*}
            Using H\"older's inequality conditioned on the filtration $\cF_{t_{2n + 1}}$ generated upto time $t_{2n + 1}$, we get
            \begin{align*}
                \tildeJ_{1} &\leq \E \Bigg[ \exp \Bigg( 48 p^{2} S^{2} T \frac{\abs{\log h}}{\abs{\log h} - h} ( 1 - 4Sh + 2 S^{2} h^{2}) \\
                &\qquad \qquad \inner{\hatY_{2n}^{f} - \hatY_{2n}^{c}, (Da(\hatY_{2n + 1}^{f}) - Da(\hatY_{2n}^{f})) \Delta Z_{2n + 1}} \Bigg) \; \bigg\vert \; \cF_{t_{2n + 1}} \Bigg]^{1/3} \times \\
                &\quad \E \Bigg[ \exp \Bigg( 48 p^{2} S^{2} T \frac{\abs{\log h}}{\abs{\log h} - h} ( 1 - 4Sh + 2 S^{2} h^{2}) \\
                &\qquad \qquad \inner{\hatY_{2n}^{f} - \hatY_{2n}^{c}, (Da(\hatY_{2n}^{f}) - Da(\hatY_{2n}^{c})) \Delta Z_{2n + 1}} \Bigg) \; \bigg\vert \; \cF_{t_{2n + 1}} \Bigg]^{1/3} \times  \\
                &\quad \E \Bigg[ \exp \Bigg( 24 p^{2} S^{2} T \frac{\abs{\log h}}{\abs{\log h} - h} C \norm{\Delta Z_{2n + 1}}^{2} \Bigg) \; \bigg\vert \; \cF_{t_{2n + 1}} \Bigg]^{1/3}.
            \end{align*}
            We define the random variable $\chi_{n} \sim \mathcal{N}(0, \Id)$ for all $n = 0, \ldots, N$.
            Recall that the random variable $\xi \coloneqq \sum_{k = 1}^{d} (\chi_{n, k})^{2}$ is chi-squared distributed. The moment generating function for the chi-squared distribution is given by
            \begin{equation}
                \label{eq:mgf_chi_squared}
                \E \big[ \exp (\theta \xi) \big] = (1 - 2 \theta)^{-d / 2}, \qquad \forall \theta < \frac{1}{2}.
            \end{equation}
            Using equation~\eqref{eq:mgf_chi_squared}, for all sufficiently small $h > 0$, the following inequality holds
            \begin{equation*}
                \begin{split}
                    & \E \Bigg[ \exp \bigg( 24 p^{2} S^{2} T \frac{\abs{\log h}}{\abs{\log h} - h} C \norm{\Delta Z_{2n + 1}}^{2} \bigg) \; \bigg \vert \; \cF_{t_{2n + 1}} \Bigg]^{1/3} \\
                    =& \E \Bigg[ \exp \bigg( 16 p^{2} S^{2} T C h^{3} \norm{\chi_{2n + 1}}^{2} \bigg) \Bigg]^{1/3} \leq \frac{1}{\big( 1 - 32 p^{2} S^{2} T C h^{3} \big)^{d/6}} \leq \exp(C T h^{3}),
                \end{split}
            \end{equation*}
            for all $T \leq C_{0} h^{-3}$, where $C_{0}$ is a constant that is independent of $h$. Using Assumptions~\ref{asmp:lipschitz_a}, and \ref{asmp:aD_lipschitz}, and the moment generating function for the standard Normal distribution, we have
            \begin{equation*}
                \begin{split}
                    &\E \Bigg[ \exp \bigg( 48 p^{2} S^{2} T \frac{\abs{\log h}}{\abs{\log h} - h} (1 - 4Sh + 2S^{2}h^{2}) \\
                    &\qquad \qquad \inner{\hatY_{2n}^{f} - \hatY_{2n}^{c}, (Da(\hatY_{2n + 1}^{f}) - Da(\hatY_{2n}^{f})) \Delta Z_{2n + 1}} \bigg) \; \bigg \vert \; \cF_{t_{2n + 1}} \Bigg]^{1/3} \\
                    = & \E \Bigg[ \exp \bigg( \frac{48}{\sqrt{3}} p^{2} S^{2} T \frac{\abs{\log h}}{\abs{\log h} - h} h^{3/2} (1 - 4Sh + 2S^{2}h^{2}) \\
                    &\qquad \inner{\hatY_{2n}^{f} - \hatY_{2n}^{c}, (Da(\hatY_{2n + 1}^{f}) - Da(\hatY_{2n}^{f})) \chi_{2n + 1}} \bigg) \; \bigg \vert \; \cF_{t_{2n + 1}} \Bigg]^{1/3} \leq \exp \bigg( C T^{2} h^{3} \norm{\hatY_{2n}^{f} - \hatY_{2n}^{c}}^{2} \bigg).
                \end{split}
            \end{equation*}
            Similarly, we have
            \begin{align*}
                &\E \Bigg[ \exp \bigg( 48 p^{2} S^{2} T \frac{\abs{\log h}}{\abs{\log h} - h} (1 - 4Sh + 2S^{2}h^{2}) \\
                &\qquad \inner{\hatY_{2n}^{f} - \hatY_{2n}^{c}, (Da(\hatY_{2n}^{f}) - Da(
                \hatY_{2n}^{c})) \Delta Z_{2n + 1}} \bigg) \; \bigg \vert \; \cF_{t_{2n + 1}} \Bigg]^{1/3} \leq \exp \bigg( C T^{2} h^{3} \norm{\hatY_{2n}^{f} - \hatY_{2n}^{c}}^{2} \bigg).
            \end{align*}
            Combining the above estimates, we obtain
            \begin{align*}
                & \E \Bigg[ \exp \Bigg( 8 p^{2} S^{2} T \frac{\abs{\log h}}{\abs{\log h} - h} \norm{\hatY_{2n + 2}^{f} - \hatY_{2n + 2}^{c}}^{2} \Bigg) \Bigg] \\
                ~
                \leq & \E \Bigg[ \exp \Bigg( 8 p^{2} S^{2} T \bigg( (1 - 2 \gamma_{1} h) \norm{\hatY_{2n}^{f} - \hatY_{2n}^{c}}^{2} + C  h^{3} \abs{\log h} \bigg( 1 + \norm{\hatY_{2n}^{f}}^{2} + \norm{\hatY_{2n}^{c}}^{2} \bigg) \bigg) \Bigg) \times \\
                &\quad \exp \Bigg( C T h^{3} + C T^{2} h^{3} \norm{\hatY_{2n}^{f} - \hatY_{2n}^{c}}^{2} \Bigg) \times \tildeJ_{2} \Bigg],
            \end{align*}
            where $\tildeJ_{2}$ is defined as
            \begin{equation*}
                \begin{split}
                    \tildeJ_{2} &\coloneqq \E \Bigg[ \exp \Bigg( 8 p^{2} S^{2} T \frac{\abs{\log h}}{\abs{\log h} - h} \bigg( C h \abs{\log h} \norm{\Delta W_{2n}}^{2} + C \norm{\Delta Z_{2n}}^{2} \\
                    &\quad + 2 (1 - Sh) ( 1 - 4Sh + 2 S^{2} h^{2}) \inner{\hatY_{2n}^{f} - \hatY_{2n}^{c}, (Da(\hatY_{2n}^{f}) - Da(\hatY_{2n}^{c})) \Delta Z_{2n}} \\
                    &\quad - 2 h ( 1 - 4Sh + 2 S^{2} h^{2}) \inner{\hatY_{2n}^{f} - \hatY_{2n}^{c}, Da(\hatY_{2n}^{c}) \Delta W_{2n}} \bigg) \Bigg) \; \bigg\vert \; \cF_{t_{2n}} \Bigg].
                \end{split}
            \end{equation*}
            Using H\"older's inequality conditioned on the filtration $\cF_{t_{2n}}$ generated until time $t_{2n}$, we obtain
            \begin{align*}
                \tildeJ_{2} &\leq \E \bigg[ \exp \bigg( 64 p^{2} S^{2} T C h \abs{\log h} \norm{\Delta W_{2n}}^{2} \bigg) \; \bigg\vert \; \cF_{t_{2n}} \bigg]^{1/4} \times \E \bigg[ \exp \bigg( 64 p^{2} S^{2} T C \norm{\Delta Z_{2n}}^{2} \bigg) \; \bigg\vert \; \cF_{t_{2n}} \bigg]^{1/4} \\
                & \times \E \bigg[ \exp \bigg( 64 p^{2} S^{2} T \frac{\abs{\log h}}{\abs{\log h} - h} (1 - Sh) (1 - 4Sh + 2S^{2}h^{2}) \\
                &\qquad \qquad \inner{\hatY_{2n}^{f} - \hatY_{2n}^{c}, (Da(\hatY_{2n}^{f}) - Da(\hatY_{2n}^{c})) \Delta Z_{2n}} \bigg) \; \bigg\vert \; \cF_{t_{2n}} \bigg]^{1/4} \times \\
                & \E \bigg[ \exp \bigg(- 64 p^{2} S^{2} T \frac{\abs{\log h}}{\abs{\log h} - h} h (1 - 4Sh + 2S^{2}h^{2}) \inner{\hatY_{2n}^{f} - \hatY_{2n}^{c}, Da(\hatY_{2n}^{c}) \Delta W_{2n}} \bigg) \; \bigg\vert \; \cF_{t_{2n}} \bigg]^{1/4}.
            \end{align*}
            Using a similar argument as before and combining all the estimates, we obtain
            \begin{equation*}
                \begin{split}
                    & \E \Bigg[ \exp \Bigg( 8 p^{2} S^{2} T \frac{\abs{\log h}}{\abs{\log h} - h} \norm{\hatY_{2n + 2}^{f} - \hatY_{2n + 2}^{c}}^{2} \Bigg) \Bigg] \\
                    ~
                    \leq & \E \Bigg[ \exp \Bigg( 8 p^{2} S^{2} T \bigg( (1 - 2 \gamma_{1} h) \norm{\hatY_{2n}^{f} - \hatY_{2n}^{c}}^{2} + C h^{3} \abs{\log h} \bigg( \norm{\hatY_{2n}^{f}}^{2} + \norm{\hatY_{2n}^{c}}^{2} \bigg) \bigg) \Bigg) \times \\
                    &\qquad \exp \Bigg( C T h^{2} \abs{\log h} + C T^{2} h^{3} \norm{\hatY_{2n}^{f} - \hatY_{2n}^{c}}^{2} \Bigg) \Bigg].
                \end{split}
            \end{equation*}
            Note that for all $1 \leq T \leq \gamma_{1} h^{-2} / \big( 2 \max \big( 1, \; C \big) \big)$, and for all sufficiently small $h > 0$, there exists a $\hatgamma_{1} > 0$ such that the following inequality holds
            \begin{equation*}
                \begin{split}
                    & \E \Bigg[ \exp \Bigg( 8 p^{2} S^{2} T \frac{\abs{\log h}}{\abs{\log h} - h} \norm{\hatY_{2n + 2}^{f} - \hatY_{2n + 2}^{c}}^{2} \Bigg) \Bigg] \\
                    ~
                    \leq & \exp \big( C T h^{2} \abs{\log h} \big) \\
                    &\qquad \qquad \E \Bigg[ \exp \Bigg( 8 p^{2} S^{2} T \bigg( (1 - 2 \hatgamma_{1} h) \norm{\hatY_{2n}^{f} - \hatY_{2n}^{c}}^{2} + C h^{3} \abs{\log h} \bigg( \norm{\hatY_{2n}^{f}}^{2} + \norm{\hatY_{2n}^{c}}^{2} \bigg) \bigg) \Bigg) \Bigg].
                \end{split}
            \end{equation*}
            Using H\"older's inequality once again, we obtain
            \begin{align*}
                & \E \Bigg[ \exp \Bigg( 8 p^{2} S^{2} T \frac{\abs{\log h}}{\abs{\log h} - h} \norm{\hatY_{2n + 2}^{f} - \hatY_{2n + 2}^{c}}^{2} \Bigg) \Bigg] \\
                \leq & \exp \big( C T h^{2} \abs{\log h} \big) \E \Bigg[ \exp \bigg( 8 p^{2} S^{2} T \norm{\hatY_{2n}^{f} - \hatY_{2n}^{c}}^{2} \bigg) \Bigg]^{1 - 2 \hatgamma_{1} h} \\
                &\qquad \qquad \E \Bigg[ \exp \bigg( C T h^{2} \abs{\log h} \bigg( \norm{\hatY_{2n}^{f}}^{2} + \norm{\hatY_{2n}^{c}}^{2} \bigg) \bigg) \Bigg]^{2 \hatgamma_{1} h}.
            \end{align*}
            Using Theorem~\ref{thm:stability}, H\"older's inequality, Taylor series expansion of the exponential function, Stirling's approximation, and Fatou's lemma, the following inequality holds
            \begin{align*}
                &\E \Bigg[ \exp \Bigg( C T h^{2} \abs{\log h} \bigg( \norm{\hatY_{2n}^{f}}^{2} + \norm{\hatY_{2n}^{c}}^{2} \bigg) \Bigg) \Bigg]^{2 \hatgamma_{1} h} \\
                ~
                \leq & \E \Bigg[ \exp \Bigg( C T h^{2} \abs{\log h} \norm{\hatY_{2n}^{f}}^{2} \Bigg) \Bigg]^{\hatgamma_{1} h} \E \Bigg[ \exp \Bigg( C T h^{2} \abs{\log h} \norm{\hatY_{2n}^{c}}^{2} \Bigg) \Bigg]^{\hatgamma_{1} h} \\
                ~
                = & \left( 1 + \sum_{k = 1}^{\infty} \frac{(C T h^{2} \abs{\log h})^{k} \E \bigg[ \norm{\hatY_{2n}^{f}}^{2k} \bigg]}{k!} \right)^{\hatgamma_{1} h} \left( 1 + \sum_{k = 1}^{\infty} \frac{(C T h^{2} \abs{\log h})^{k} \E \bigg[ \norm{\hatY_{2n}^{c}}^{2k} \bigg]}{k!} \right)^{\hatgamma_{1} h} \\
                ~
                \leq & \left( 1 + \sum_{k = 1}^{\infty} \frac{(C T \e h^{2} \abs{\log h})^{k} }{\sqrt{2 \pi k}} \right)^{\hatgamma_{1} h} \left( 1 + \sum_{k = 1}^{\infty} \frac{(C T \e h^{2} \abs{\log h})^{k}}{\sqrt{2 \pi k}} \right)^{\hatgamma_{1} h} \leq 2^{2 \hatgamma_{1} h},
            \end{align*}
            for all $T \leq C_{0} h^{-2} \abs{\log h}^{-1}$. Combining all the above estimates, $\hatI_{1}$ yields the upper bound
            \begin{equation*}
                \begin{split}
                    \hatI_{1} &\leq \Big( \nu_{2}^{q} \nu_{1}^{-q} q^{q/2} h^{q/2} \abs{\log h}^{-q} \Big)^{\frac{h}{\abs{\log h}}} \E \Bigg[ \exp \bigg( 8 p^{2} S^{2} T \frac{\abs{\log h}}{\abs{\log h} - h} \norm{\hatY_{2n + 2}^{f} - \hatY_{2n + 2}^{c}}^{2} \bigg) \Bigg]^{1 - \frac{h}{\abs{\log h}}} \\
                    ~
                    &\leq \Big( 2^{2 \hatgamma_{1} h} \exp(CTh^{2} \abs{\log h}) \Big)^{1 - \frac{h}{\abs{\log h}}} \Big( \nu_{2}^{q} \nu_{1}^{-q} q^{q/2} h^{q/2} \abs{\log h}^{-q} \Big)^{\frac{h}{\abs{\log h}}} \times \\
                    &\qquad \qquad \E \Bigg[ \exp \Bigg( 8 p^{2} S^{2} T \norm{\hatY_{2n}^{f} - \hatY_{2n}^{c}}^{2} \Bigg) \Bigg]^{(1 - 2 \hatgamma_{1} h) \Big(1 - \frac{h}{\abs{\log h}} \Big)}.
                \end{split}
            \end{equation*}
            Note that $\exp(x^2) \geq 1, \; \forall x \in \bR$. For $q = h^{-1} \abs{\log h}^{2}$, for all sufficiently large $\nu_{1} > \nu_{2} \e$, for all $T \leq C_{0} h^{-2} \abs{\log h}^{-1}$, and for all sufficiently small $h > 0$, we have
            \begin{equation*}
                \begin{split}
                    \hatI_{1} &\leq \Big( 2^{2 \hatgamma_{1} h} \exp(C) \Big)^{1 - \frac{h}{\abs{\log h}}} \Bigg( \frac{\nu_{2}}{\nu_{1}} \Bigg)^{\abs{\log h}} \E \Bigg[ \exp \Bigg( 8 p^{2} S^{2} T \norm{\hatY_{2n}^{f} - \hatY_{2n}^{c}}^{2} \Bigg) \Bigg]^{1 - 2 \hatgamma_{1} h} \\
                    ~
                    &\leq \hatC{1} h \Big( \hatJ_{2n} \Big)^{1 - 2 \hatgamma_{1} h},
                \end{split}
            \end{equation*}
            where $\hatC{1} \in (0, 1)$ is a constant independent of $T$ and $h$.
            
            We now bound the term $\hatI_{2}$ similarly. Using Assumptions~\ref{asmp:lipschitz_a} and \ref{asmp:aD_lipschitz}, and the mean value theorem, we have
            \begin{equation*}
                \begin{split}
                    a(\hatY_{2n + 1}^{f}) - a(\hatY_{2n}^{f}) = Da(\hatY_{2n}^{f}) (\hatY_{2n + 1}^{f} - \hatY_{2n}^{f}) + R_{2},
                \end{split}
            \end{equation*}
            where $\norm{R_{2}} \leq d^{3/2} K_{b} \norm{\hatY_{2n + 1}^{f} - \hatY_{2n}^{f}}^{2}$.
            
            Using equation~\eqref{eq:method1_diff_fc_1}, H\"older's inequality, and Young's inequality, there exists $\gamma_{2} \in (0, 2S - \lambda)$, for all sufficiently small $h > 0$, such that
            \begin{align*}
                \allowdisplaybreaks
                \norm{\hatY_{2n + 2}^{f} - \hatY_{2n + 2}^{c}}^{2} &\leq (1 - 4 (2S - \lambda) h + Ch \abs{\log h}^{-1} ) \norm{\hatY_{2n}^{f} - \hatY_{2n}^{c}}^{2} + C \bigg\{ h^{3} \abs{\log h} \bigg( 1 + \norm{\hatY_{2n}^{f}}^{2} \\
                &\quad + \norm{\hatY_{2n}^{c}}^{2} \bigg) + h^{2} \norm{\Delta W_{2n}}^{2} + \norm{\Delta Z_{2n + 1}}^{2} + \norm{\Delta Z_{2n}}^{2} \bigg\} \\
                &\quad + 2 (1 - Sh) (1 - 4Sh + 2S^{2}h^{2}) \inner{\hatY_{2n}^{f} - \hatY_{2n}^{c}, (Da(\hatY_{2n}^{f}) - Da(\hatY_{2n}^{c})) \Delta Z_{2n}} \\
                &\quad + 2 h (1 - 4Sh + 2S^{2}h^{2}) \inner{\hatY_{2n}^{f} - \hatY_{2n}^{c}, R_{2}} \\
                &\quad - h^{2} (1 - 4Sh + 2S^{2}h^{2}) \inner{\hatY_{2n}^{f} - \hatY_{2n}^{c}, \Delta a(\hatY_{2n}^{f})} \\
                &\quad + 2 (1 - 4Sh + 2S^{2}h^{2}) \inner{\hatY_{2n}^{f} - \hatY_{2n}^{c}, (Da(\hatY_{2n + 1}^{f}) - Da(
                \hatY_{2n}^{f})) \Delta Z_{2n + 1}} \\
                &\quad + 2 (1 - 4Sh + 2S^{2}h^{2}) \inner{\hatY_{2n}^{f} - \hatY_{2n}^{c}, (Da(\hatY_{2n}^{f}) - Da(
                \hatY_{2n}^{c})) \Delta Z_{2n + 1}} \\
                &\quad + 2 h (1 - 4Sh + 2S^{2}h^{2}) \inner{\hatY_{2n}^{f} - \hatY_{2n}^{c}, (Da(\hatY_{2n}^{f}) - Da(\hatY_{2n}^{c})) \Delta W_{2n}} \\
                ~
                &\leq (1 - 4 \gamma_{2} h) \norm{\hatY_{2n}^{f} - \hatY_{2n}^{c}}^{2} + C \bigg\{ h^{3} \abs{\log h} \bigg( 1 + \norm{\hatY_{2n}^{f}}^{2} + \norm{\hatY_{2n}^{c}}^{2} \bigg) \\
                &\quad + h^{2} \norm{\Delta W_{2n}}^{2} + \norm{\Delta Z_{2n + 1}}^{2} + \norm{\Delta Z_{2n}}^{2} \bigg\} \\
                &\quad + 2 (1 - Sh) (1 - 4Sh + 2S^{2}h^{2}) \inner{\hatY_{2n}^{f} - \hatY_{2n}^{c}, (Da(\hatY_{2n}^{f}) - Da(\hatY_{2n}^{c})) \Delta Z_{2n}} \\
                &\quad + C h \norm{\hatY_{2n}^{f} - \hatY_{2n}^{c}} \bigg( h^{2} \Big( 1 + \norm{\hatY_{2n}^{f}}^{2} + \norm{\hatY_{2n}^{c}}^{2} \Big) + \norm{\Delta W_{2n}}^{2} + \norm{\Delta Z_{2n}}^{2} \bigg) \\
                &\quad + 2 (1 - 4Sh + 2S^{2}h^{2}) \inner{\hatY_{2n}^{f} - \hatY_{2n}^{c}, (Da(\hatY_{2n + 1}^{f}) - Da(\hatY_{2n}^{f})) \Delta Z_{2n + 1}} \\
                &\quad + 2 (1 - 4Sh + 2S^{2}h^{2}) \inner{\hatY_{2n}^{f} - \hatY_{2n}^{c}, (Da(\hatY_{2n}^{f}) - Da(\hatY_{2n}^{c})) \Delta Z_{2n + 1}} \\
                &\quad + 2 h (1 - 4Sh + 2S^{2}h^{2}) \inner{\hatY_{2n}^{f} - \hatY_{2n}^{c}, (Da(\hatY_{2n}^{f}) - Da(\hatY_{2n}^{c})) \Delta W_{2n}}.
            \end{align*}
            Using the above inequality, equation~\eqref{eq:set_theta}, and Young's inequality, the term $\hatI_{2}$ yields
            \begin{align*}
                \hatI_{2} &= \E \Bigg[ \ind_{\Theta_{2n}^{\comp}} \exp \Bigg( 8 p^{2} S^{2} T \norm{\hatY_{2n + 2}^{f} - \hatY_{2n + 2}^{c}}^{2} \Bigg) \Bigg] \\
                ~
                &\leq \E \Bigg[ \big( \ind_{\Theta_{2n}^{\comp}} \big)^{2} \exp \Bigg( 8 p^{2} S^{2} T \bigg( (1 - 4 \gamma_{2} h) \norm{\hatY_{2n}^{f} - \hatY_{2n}^{c}}^{2} \\
                &\quad + C \bigg\{ h^{3} \abs{\log h} \big( 1 + \norm{\hatY_{2n}^{f}}^{2} + \norm{\hatY_{2n}^{c}}^{2} \big) + h^{2} \norm{\Delta W_{2n}}^{2} + \norm{\Delta Z_{2n + 1}}^{2} + \norm{\Delta Z_{2n}}^{2} \bigg\} \\
                &\quad + 2 (1 - Sh) (1 - 4Sh + 2S^{2}h^{2}) \inner{\hatY_{2n}^{f} - \hatY_{2n}^{c}, (Da(\hatY_{2n}^{f}) - Da(\hatY_{2n}^{c})) \Delta Z_{2n}} \\
                &\quad + C h \norm{\hatY_{2n}^{f} - \hatY_{2n}^{c}} \bigg\{ h^{2} \Big( 1 + \norm{\hatY_{2n}^{f}}^{2} + \norm{\hatY_{2n}^{c}}^{2} \Big) + \norm{\Delta W_{2n}}^{2} + \norm{\Delta Z_{2n}}^{2} \bigg\} \\
                &\quad + 2 (1 - 4Sh + 2S^{2}h^{2}) \inner{\hatY_{2n}^{f} - \hatY_{2n}^{c}, (Da(\hatY_{2n + 1}^{f}) - Da(
                \hatY_{2n}^{f})) \Delta Z_{2n + 1}} \\
                &\quad + 2 (1 - 4Sh + 2S^{2}h^{2}) \inner{\hatY_{2n}^{f} - \hatY_{2n}^{c}, (Da(\hatY_{2n}^{f}) - Da(
                \hatY_{2n}^{c})) \Delta Z_{2n + 1}} \\
                &\quad + 2 h (1 - 4Sh + 2S^{2}h^{2}) \inner{\hatY_{2n}^{f} - \hatY_{2n}^{c}, (Da(\hatY_{2n}^{f}) - Da(\hatY_{2n}^{c})) \Delta W_{2n}} \bigg) \Bigg) \Bigg].
            \end{align*}
            Following a similar argument as before, using H\"older's inequality conditioned on the filtration $\cF_{t_{2n + 1}}$ generated until time $t_{2n + 1}$, using equation~\eqref{eq:mgf_chi_squared}, and for all sufficiently small $h > 0$, we obtain
            \begin{align*}
                \hatI_{2} &\leq \E \Bigg[ \exp \Bigg( 8 p^{2} S^{2} T \bigg( (1 - 4 \gamma_{2} h) \norm{\hatY_{2n}^{f} - \hatY_{2n}^{c}}^{2} \\
                &\quad + C \bigg\{ h^{3} \abs{\log h} \Big( 1 + \norm{\hatY_{2n}^{f}}^{2} + \norm{\hatY_{2n}^{c}}^{2} \Big) + h^{2} \norm{\Delta W_{2n}}^{2} + \norm{\Delta Z_{2n}}^{2} \bigg\} \\
                &\quad + 2 (1 - Sh) (1 - 4Sh + 2S^{2}h^{2}) \inner{\hatY_{2n}^{f} - \hatY_{2n}^{c}, (Da(\hatY_{2n}^{f}) - Da(\hatY_{2n}^{c})) \Delta Z_{2n}} \\
                &\quad + 2 h (1 - 4Sh + 2S^{2}h^{2}) \inner{\hatY_{2n}^{f} - \hatY_{2n}^{c}, (Da(\hatY_{2n}^{f}) - Da(\hatY_{2n}^{c})) \Delta W_{2n}} \bigg) \Bigg) \times \\
                &\qquad \exp \Bigg( C \nu_{1} T h^{3} \abs{\log h} \Big(1 + \norm{\hatY_{2n}^{f}}^{2} + \norm{\hatY_{2n}^{c}}^{2} \Big) + C \nu_{1} T h \abs{\log h} \norm{\Delta Z_{2n}}^{2} \\
                &\qquad \qquad + C T h \norm{\hatY_{2n}^{f} - \hatY_{2n}^{c}} \norm{\Delta W_{2n}}^{2} \Bigg) \times \exp \Bigg( C T h^{3} + C T^{2} h^{3} \norm{\hatY_{2n}^{f} - \hatY_{2n}^{c}}^{2} \Bigg) \Bigg]. \\
            \end{align*}
            Similarly, using the tower property, taking the expectation conditioned on the filtration $\cF_{t_{2n}}$ generated upto time $t_{2n}$, we obtain
            \begin{align*}
                \hatI_{2} &\leq \E \Bigg[ \exp \Bigg( 8 p^{2} S^{2} T \bigg( (1 - 4 \gamma_{2} h) \norm{\hatY_{2n}^{f} - \hatY_{2n}^{c}}^{2} + C h^{3} \abs{\log h} \Big( 1 + \norm{\hatY_{2n}^{f}}^{2} + \norm{\hatY_{2n}^{c}}^{2} \Big) \bigg) \Bigg) \\
                &\qquad \qquad \times \exp \Bigg( C \nu_{1} T h^{3} \abs{\log h} \Big(1 + \norm{\hatY_{2n}^{f}}^{2} + \norm{\hatY_{2n}^{c}}^{2} \Big) \Bigg) \\
                &\qquad \qquad \times \exp \Bigg( C T h^{3} + C T^{2} h^{3} \norm{\hatY_{2n}^{f} - \hatY_{2n}^{c}}^{2} \Bigg) \times \tildeJ_{3} \Bigg],
            \end{align*}
            where $\tildeJ_{3}$ is defined as
            \begin{equation*}
                \begin{split}
                    \tildeJ_{3} &\coloneqq \E \Bigg[ \exp \Bigg( C \nu_{1} T h \abs{\log h} \norm{\Delta Z_{2n}}^{2} + C T h \norm{\hatY_{2n}^{f} - \hatY_{2n}^{c}} \norm{\Delta W_{2n}}^{2} \Bigg) \\
                    &\quad \times \exp \Bigg( 8 p^{2} S^{2} T \bigg( C \bigg\{ h^{2} \norm{\Delta W_{2n}}^{2} + \norm{\Delta Z_{2n}}^{2} \bigg\} \\
                    &\quad + 2 (1 - Sh) (1 - 4Sh + 2S^{2}h^{2}) \inner{\hatY_{2n}^{f} - \hatY_{2n}^{c}, (Da(\hatY_{2n}^{f}) - Da(\hatY_{2n}^{c})) \Delta Z_{2n}} \\
                    &\quad + 2 h (1 - 4Sh + 2S^{2}h^{2}) \inner{\hatY_{2n}^{f} - \hatY_{2n}^{c}, (Da(\hatY_{2n}^{f}) - Da(\hatY_{2n}^{c})) \Delta W_{2n}} \bigg) \Bigg) \; \bigg\vert \; \cF_{t_{2n}} \Bigg].
                \end{split}
            \end{equation*}
            Using H\"older's inequality, we obtain
            \begin{align*}
                    \tildeJ_{3} &\leq \E \Bigg[ \exp \Bigg( C \nu_{1} T h \abs{\log h} \norm{\Delta Z_{2n}}^{2} \Bigg) \; \bigg\vert \; \cF_{t_{2n}} \Bigg]^{1/6} \\
                    &\quad \times \underbrace{\E \Bigg[ \exp \Bigg( C T h \norm{\hatY_{2n}^{f} - \hatY_{2n}^{c}} \norm{\Delta W_{2n}}^{2} \Bigg) \; \bigg\vert \; \cF_{t_{2n}} \Bigg]^{1/6}}_{\tildeJ_{4}} \\
                    &\quad \times \E \Bigg[ \exp \Bigg( C T h^{2} \norm{\Delta W_{2n}}^{2} \Bigg) \; \bigg\vert \; \cF_{t_{2n}} \Bigg]^{1/6} \times \E \Bigg[ \exp \Bigg( C T \norm{\Delta Z_{2n}}^{2} \Bigg) \; \bigg\vert \; \cF_{t_{2n}} \Bigg]^{1/6} \\
                    &\quad \E \Bigg[ \exp \Bigg( 96 p^{2} S^{2} T (1 - Sh) (1 - 4Sh + 2S^{2}h^{2}) \\
                    &\qquad \qquad \inner{\hatY_{2n}^{f} - \hatY_{2n}^{c}, (Da(\hatY_{2n}^{f}) - Da(\hatY_{2n}^{c})) \Delta Z_{2n}} \Bigg) \; \bigg\vert \; \cF_{t_{2n}} \Bigg]^{1/6} \\
                    &\quad \E \Bigg[ \exp \Bigg( 96 p^{2} S^{2} T h (1 - 4Sh + 2S^{2}h^{2}) \\
                    &\qquad \qquad \inner{\hatY_{2n}^{f} - \hatY_{2n}^{c}, (Da(\hatY_{2n}^{f}) - Da(\hatY_{2n}^{c})) \Delta W_{2n}} \bigg) \Bigg) \; \bigg\vert \; \cF_{t_{2n}} \Bigg]^{1/6}.
            \end{align*}
            Note that for all $T \leq C_{0} h^{-2} \abs{\log h}^{-1}$, and for all sufficiently small $h > 0$, using Young's inequality, we obtain
            \begin{equation*}
                \begin{split}
                    \tildeJ_{4} &\leq \exp \Bigg( C T h^{2} \norm{\hatY_{2n}^{f} - \hatY_{2n}^{c}} \Bigg) \leq \exp \Bigg( C T^{2} h^{3} \norm{\hatY_{2n}^{f} - \hatY_{2n}^{c}}^{2} + C h \Bigg).
                \end{split}
            \end{equation*}
            Combining all the above estimates, using equation~\eqref{eq:mgf_chi_squared}, we obtain
            \begin{equation*}
                \begin{split}
                    \tildeJ_{3} \leq \exp \Bigg( C \nu_{1} T h^{4} \abs{\log h} + C T h^{3} + C h + C T^{2} h^{3} \norm{\hatY_{2n}^{f} - \hatY_{2n}^{c}}^{2} \Bigg).
                \end{split}
            \end{equation*}
            Note that for all $1 \leq T \leq \gamma_{2} h^{-2} / (2 \max(1, \; C))$, and all sufficiently small $h > 0$, there exists a $\hatgamma_{2} > 0$ such that the following inequality holds
            \begin{equation*}
                \begin{split}
                    \hatI_{2} &\leq \E \Bigg[ \exp \Bigg( 8 p^{2} S^{2} T \bigg( (1 - 4 \gamma_{2} h) \norm{\hatY_{2n}^{f} - \hatY_{2n}^{c}}^{2} + C h^{3} \abs{\log h} \Big( 1 + \norm{\hatY_{2n}^{f}}^{2} + \norm{\hatY_{2n}^{c}}^{2} \Big) \bigg) \Bigg) \\
                    &\quad \exp \Bigg( C \nu_{1} T h^{4} \abs{\log h} + C T h^{3} + C h \Bigg) \exp \Bigg( C \nu_{1} T h^{3} \abs{\log h} \Big(1 + \norm{\hatY_{2n}^{f}}^{2} + \norm{\hatY_{2n}^{c}}^{2} \Big) \Bigg) \Bigg] \\
                    ~
                    &\leq \exp \Big( C \nu_{1} \big( T h^{3} \abs{\log h} + h \big) \Big) \E \Bigg[ \exp \Bigg( 8 p^{2} S^{2} T (1 - 4 \hatgamma_{2} h) \norm{\hatY_{2n}^{f} - \hatY_{2n}^{c}}^{2} \\
                    &\qquad \qquad \qquad \qquad \qquad + C \nu_{1} T h^{3} \abs{\log h} \Big( \norm{\hatY_{2n}^{f}}^{2} + \norm{\hatY_{2n}^{c}}^{2} \Big) \Bigg) \Bigg].
                \end{split}
            \end{equation*}
            For all $T \leq C_{0} h^{-2} \abs{\log h}^{-1}$, and for all sufficiently small $h > 0$, using H\"older's inequality, we have
            \begin{equation*}
                \begin{split}
                    \hatI_{2} &\leq \exp \big( C \nu_{1} h \big) \E \Bigg[ \exp \Bigg( 8 p^{2} S^{2} T \norm{\hatY_{2n}^{f} - \hatY_{2n}^{c}}^{2} \Bigg) \Bigg]^{1 - 4 \hatgamma_{2} h} \times \\
                    & \qquad \qquad \E \Bigg[ \exp \Bigg( C \nu_{1} T h^{2} \abs{\log h} \Big( \norm{\hatY_{2n}^{f}}^{2} + \norm{\hatY_{2n}^{c}}^{2} \Big) \Bigg) \Bigg]^{4 \hatgamma_{2} h} \\
                    ~
                    &\leq 2^{4 \hatgamma_{2} h} \exp ( \hatC{2} \nu_{1} h ) \Big( \hatJ_{2n} \Big)^{1 - 4 \hatgamma_{2} h},
                \end{split}
            \end{equation*}
            where $\hatC{2} > 0$ is a constant that is independent of $T$ and $h$. Combining the terms $\hatI_{1}$ and $\hatI_{2}$, we obtain
            \begin{equation*}
                \begin{split}
                    \hatJ_{2n + 2} \leq \hatC{1} h \Big( \hatJ_{2n} \Big)^{1 - 2 \hatgamma_{1} h} + 2^{4 \hatgamma_{2} h} \exp \big( \hatC{2} \nu_{1} h \big) \Big( \hatJ_{2n} \Big)^{1 - 4 \hatgamma_{2} h}.
                \end{split}
            \end{equation*}
            Note that for $\tildegamma \coloneqq \min \big( 2 \hatgamma_{1}, 4 \hatgamma_{2} \big)$, the following inequality holds
            \begin{equation*}
                \begin{split}
                    \hatJ_{2n + 2} \leq \Big( \hatC{1} h + 2^{4 \hatgamma_{2} h} \exp \big( \hatC{2} \nu_{1} h \big) \Big) \Big( \hatJ_{2n} \Big)^{1 - \tildegamma h},
                \end{split}
            \end{equation*}
            where $\hatJ_{2n}$ is given by equation~\eqref{eq:hatJ_terms_odd_even}. By recursively bounding the term $\hatJ_{2n}$ using a similar argument as above, by defining the sets $\Theta_{2k}$ for all $k = 0, \ldots, n - 1$, we obtain
            \begin{equation*}
                \begin{split}
                    \hatJ_{2n + 2} &\leq \Big( \hatC{1} h + 2^{4 \hatgamma_{2} h} \exp \big( \hatC{2} \nu_{1} h \big) \Big)^{\sum_{k = 0}^{n} (1 - \tildegamma h)^{k}} \leq \Big( \hatC{1} h + 2^{4 \hatgamma_{2} h} \exp \big( \hatC{2} \nu_{1} h \big) \Big)^{\frac{1}{\tildegamma  h}} \\
                    ~
                    &\leq 2^{4 \hatgamma_{2} / \tildegamma} \exp \big( \hatC{2} \nu_{1} / \tildegamma \big) \Big( 2^{- 4 \hatgamma_{2} h} \hatC{1} h \exp \big( - \hatC{2} \nu_{1} h \big) + 1 \Big)^{\frac{1}{\tildegamma  h}}.
                \end{split}
            \end{equation*}
            Note that $\e^{-cx} \leq 2^{-cx} \leq 1 - cx/2$ for all $x \in [0, 1]$. Using this, we obtain
            \begin{equation*}
                \begin{split}
                    \hatJ_{2n + 2} &\leq 2^{4 \hatgamma_{2} / \tildegamma} \exp \big( \hatC{2} \nu_{1} / \tildegamma \big) \Big( 2^{- 4 \hatgamma_{2} h} \hatC{1} h \exp \big( - \hatC{2} \nu_{1} h \big) + 1 \Big)^{\frac{1}{\tildegamma  h}} \\
                    ~
                    &\leq 2^{4 \hatgamma_{2} / \tildegamma} \exp \big( \hatC{2} \nu_{1} / \tildegamma \big) \Big( \hatC{1} h \Big( 1 - 2 \hatgamma_{2} h \Big) \Big( 1 - \frac{\hatC{2} \nu_{1}}{2} h \Big) + 1 \Big)^{\frac{1}{\tildegamma  h}} \\
                    ~
                    &\leq 2^{4 \hatgamma_{2} / \tildegamma} \exp \big( \hatC{2} \nu_{1} / \tildegamma \big) \Big( 1 + \hatC{3} h \Big)^{\frac{1}{\tildegamma  h}} \\
                    ~
                    &\leq 2^{4 \hatgamma_{2} / \tildegamma} \exp \big( \hatC{2} \nu_{1} / \tildegamma \big) \exp \big( \hatC{3} h \Big)^{\frac{1}{\tildegamma  h}} \leq \eta_{1},
                \end{split}
            \end{equation*}
            where $\eta_{1} > 0$ is a constant that is independent of $T$ and $h$, and $\hatC{3} > 0$ is a constant that is independent of $h$.

            Using Assumptions~\ref{asmp:lipschitz_a}, \ref{asmp:dissipativity_a}, \ref{asmp:aD_lipschitz}, and equations~\eqref{eq:fine_odd} and \eqref{eq:coarse_odd}, and Young's inequality, there exists a $\gamma \in (0, 2S - \lambda)$ such that the following inequality holds
            \begin{equation*}
                \begin{split}
                    \hatJ_{2n + 1} &= \E \Bigg[ \exp \Bigg( 8 p^{2} S^{2} T \norm{\hatY_{2n + 1}^{f} - \hatY_{2n + 1}^{c}}^{2} \Bigg) \Bigg] \\
                    ~
                    &\leq \E \Bigg[ \exp \Bigg( 8 p^{2} S^{2} T \bigg( (1 - 2 \gamma h) \norm{\hatY_{2n}^{f} - \hatY_{2n}^{c}}^{2} \bigg) + C \norm{\Delta Z_{2n}}^{2} \\
                    &\qquad + 2 (1 - 2Sh) \inner{\hatY_{2n}^{f} - \hatY_{2n}^{c}, (Da(\hatY_{2n}^{f}) - Da(\hatY_{2n}^{c})) \Delta Z_{2n}} \Bigg) \Bigg].
                \end{split}
            \end{equation*}
            Using H\"older's inequality conditioned on the filtration $\cF_{t_{2n}}$ generated until time $t_{2n}$, and using a similar argument as before, the following inequality holds
            \begin{equation*}
                \begin{split}
                    \hatJ_{2n + 1} &\leq \E \Bigg[ \exp \Bigg( 8 p^{2} S^{2} T (1 - 2 \gamma h) \norm{\hatY_{2n}^{f} - \hatY_{2n}^{c}}^{2} \Bigg) \exp \Bigg( C T h^{3} + C T^{2} h^{3} \norm{\hatY_{2n}^{f} - \hatY_{2n}^{c}}^{2} \Bigg) \Bigg) \Bigg].
                \end{split}
            \end{equation*}
            For all $1 \leq T \leq \gamma h^{-2} / (2 \max (1, C) )$, and all sufficiently small $h > 0$, there exists $\hatgamma > 0$ such that, using H\"older's inequality, the following inequality holds
            \begin{equation*}
                \begin{split}
                    \hatJ_{2n + 1} &\leq \E \Bigg[ \exp \Bigg( 8 p^{2} S^{2} T (1 - 2 \hatgamma h) \norm{\hatY_{2n}^{f} - \hatY_{2n}^{c}}^{2} + C T h^{3} \Bigg) \Bigg] \\
                    &\leq \exp ( C T h^{2} ) \E \Bigg[ \exp \Bigg( 8 p^{2} S^{2} T \norm{\hatY_{2n}^{f} - \hatY_{2n}^{c}}^{2} \Bigg) \Bigg]^{1 - 2 \hatgamma h}.
                \end{split}
            \end{equation*}
            For all $T \leq C_{0} h^{-2} \abs{\log h}^{-1}$, and all sufficiently small $h > 0$, we then obtain
            \begin{equation*}
                \begin{split}
                    \hatJ_{2n + 1} &\leq \exp ( C T h^{2} ) \eta_{1}^{1 - 2 \hatgamma h} \leq \eta_{2},
                \end{split}
            \end{equation*}
            where $\eta_{2} > 0$ is a constant that is independent of $T$ and $h$.
        \endgroup

    \subsubsection*{Proof of Theorem~\ref{thm:convergence_estimator}}
        \allowdisplaybreaks
        \begingroup
            Following a similar argument as in \cite{fang2019multilevel}, using H\"older's inequality, and Jensen's inequality, we have
            \begin{equation*}
                \begin{split}
                    &\E \Bigg[ \abs{\varPhi(\hatY_{T}^{f}) R_{T}^{f} - \varPhi(\hatY_{T}^{c}) R_{T}^{c}}^{p} \Bigg] \\
                    &\quad = \E \Bigg[ \Bigg\vert \varPhi(\hatY_{T}^{f}) R_{T}^{f} - \varPhi(\hatY_{T}^{f}) + \varPhi(\hatY_{T}^{f}) - \varPhi(\hatY_{T}^{c}) + \varPhi(\hatY_{T}^{c}) - \varPhi(\hatY_{T}^{c}) R_{T}^{c} \Bigg\vert^{p} \Bigg] \\
                    ~
                    &\quad \leq 3^{p - 1} \Bigg\{ \underbrace{\E \bigg[ \abs{\varPhi(\hatY_{T}^{f}) - \varPhi(\hatY_{T}^{c})}^{p} \bigg]}_{\tildeJ_{1}} + \underbrace{\E \bigg[ \abs{\varPhi(\hatY_{T}^{f})}^{2p} \bigg]^{1/2} \E \bigg[ \abs{1 - R_{T}^{f}}^{2p} \bigg]^{1/2}}_{\tildeJ_{2}} \\
                    &\qquad \qquad \qquad + \underbrace{\E \bigg[ \abs{\varPhi(\hatY_{T}^{c})}^{2p} \bigg]^{1/2} \E \bigg[ \abs{1 - R_{T}^{c}}^{2p} \bigg]^{1/2}}_{\tildeJ_{3}} \Bigg\}.
                \end{split}
            \end{equation*}

            From Assumption~\ref{asmp:lipschitz_payoff}, and using the results from Theorem~\ref{thm:convergence}, the term $\tildeJ_{1}$ yields the bound
            \begin{equation*}
                \begin{split}
                    \tildeJ_{1} = \E \bigg[ \abs{\varPhi(\hatY_{T}^{f}) - \varPhi(\hatY_{T}^{c})}^{p} \bigg] \leq K^{p} \E \bigg[ \norm{\hatY_{T}^{f} - \hatY_{T}^{c}}^{p} \bigg] \leq C^{p} p^{3p/2} h^{3p/2}.
                \end{split}
            \end{equation*}
            Similarly, we have
            \begin{equation*}
                \begin{split}
                    \E \bigg[ \abs{\varPhi(\hatY_{T}^{f})}^{2p} \bigg]^{1/2} \leq C^{p} p^{p/2}, \qquad \E \bigg[ \abs{\varPhi(\hatY_{T}^{c})}^{2p} \bigg]^{1/2} \leq C^{p} p^{p/2}.
                \end{split}
            \end{equation*}
            
            To bound the term $\tildeJ_{2}$, we define $E^{f}$ as
            \begin{equation*}
                E^{f} \coloneqq - \sum_{n = 0}^{N - 1} \inner{\hatSf_{n}, \Delta V_{1, n}} + \sqrt{3} \sum_{n = 0}^{N - 1} \inner{\hatSf_{n}, \Delta V_{2, n}} - 2 h \sum_{n = 0}^{N - 1} \norm{\hatSf_{n}}^{2},
            \end{equation*}
            where $\hatSf_{n}$ is as defined in equation~\eqref{eq:hatSf}. Using the mean value theorem, there exists a $\xi = \theta x + (1 - \theta)y$ for all $x, y \in \bR$, and any $\theta \in (0, 1)$, such that
            \begin{equation*}
                R_{T}^{f} = \exp (E^{f}) = 1 + \exp(\xi) E^{f},
            \end{equation*}
            where $\abs{\xi} \leq \abs{x}$ for all $\theta \in (0, 1)$. Using H\"older's inequality, we obtain
            \begin{equation*}
                \begin{split}
                    \E \bigg[ \abs{1 - R_{T}^{f}}^{2p} \bigg] &= \E \bigg[ \big( \exp(\xi) \abs{E^{f}} \big)^{2p} \bigg] \leq \E \bigg[ \exp(4 p \xi) \bigg]^{1/2} \E \bigg[ \abs{E^{f}}^{4p} \bigg]^{1/2}.
                \end{split}
            \end{equation*}
            Using the result from Theorem~\ref{thm:radon_nikodym}, we have
            \[ \E \bigg[ \exp(4 p \xi) \bigg] \leq \E \bigg[ \max ( \exp(4 p E^{f}), 1 ) \bigg] \leq \E[ \exp(4 p E^{f}) ] + 1 \leq \eta_{1}, \] for all $T \leq C_{0} h^{-2} \abs{\log h}^{-1}$, where $C_{0}$ is a constant that is independent of $h$.

            From Jensen's inequality, and H\"older's inequality, we obtain
            \begin{equation*}
                \begin{split}
                    \E \bigg[ \abs{E^{f}}^{4p} \bigg] \leq 2^{4p - 1} S^{4p} \E \Bigg[ \abs{ \sum_{n = 0}^{N - 1} \inner{\hatY_{n}^{f} - \hatY_{n}^{c}, \Delta V_{3, n}} }^{4p} \Bigg] +  2^{8p - 1} S^{8p} h^{4p} \E \Bigg[ \abs{ \sum_{n = 0}^{N - 1} \norm{\hatY_{n}^{f} - \hatY_{n}^{c}}^{2} }^{4p} \Bigg],
                \end{split}
            \end{equation*}
            where $\Delta V_{3, n} \coloneqq \sqrt{3} \Delta V_{2, n} - \Delta V_{1, n}$. Using Theorem~\ref{thm:convergence}, and Burkholder--Davis--Gundy's inequality, there exists a constant $C_{\text{BDG}} > 0$ independent of $T$ and $p$, such that
            \begin{equation*}
                \begin{split}
                    \E \bigg[ \abs{E^{f}}^{4p} \bigg] &\leq 2^{12p - 1} S^{4p} C_{\text{BDG}}^{2p} p^{2p} h^{2p} \E \Bigg[ \abs{ \sum_{n = 0}^{N - 1} \norm{\hatY_{n}^{f} - \hatY_{n}^{c}}^{2}}^{2p} \Bigg] \\
                    &\qquad +  2^{8p - 1} S^{8p} h^{4p} \E \Bigg[ \abs{ \sum_{n = 0}^{N - 1} \norm{\hatY_{n}^{f} - \hatY_{n}^{c}}^{2} }^{4p} \Bigg] \\
                    ~
                    &\leq C_{3}^{6p} T^{2p} p^{8p} h^{6p} + C_{3}^{8p} T^{4p} p^{12p} h^{12p} \\
                    ~
                    &\leq C_{3}^{8p} T^{2p} p^{8p} h^{6p} \big( 1 + T^{2p} p^{4p} h^{6p} \big).
                \end{split}
            \end{equation*}
            Then, for all $T \leq C_{0} h^{-3}$, and for all sufficiently small $h > 0$, there exists a constant $C_{5} > 0$ independent of $T$ and $p$, such that
            \begin{equation*}
                \E \bigg[ \abs{E^{f}}^{4p} \bigg] \leq C_{5}^{8p} T^{2p} p^{8p} h^{6p},
            \end{equation*}
            which then yields
            \begin{equation*}
                \E \bigg[ \abs{1 - R_{T}^{f}}^{2p} \bigg] \leq \sqrt{3} C_{5}^{4p} T^{p} p^{4p} h^{3p}.
            \end{equation*}
            A similar result can obtained for the term $\E \bigg[ \abs{1 - R_{T}^{c}}^{2p} \bigg]$ to yield a bound for the term $\tildeJ_{3}$.
            
            Combining the estimates for the terms $\tildeJ_{1}$, $\tildeJ_{2}$, and $\tildeJ_{3}$, for all $T \leq C_{0} h^{-2} \abs{\log h}^{-1}$, $p \geq 1$, there exists a constant $\kappa_{7} > 0$, independent of $T$, $p$, for all sufficiently small $h > 0$, we obtain
            \begin{equation}
                \begin{split}
                    \E \Bigg[ \abs{ \varPhi(\hatY_{T}^{f}) R_{T}^{f} - \varPhi(\hatY_{T}^{c}) R_{T}^{c} }^{p} \Bigg] \leq \kappa_{7}^{p} T^{p/2} p^{\frac{5}{2}p} h^{\frac{3}{2}p}.
                \end{split}
            \end{equation}
        \endgroup

    \subsubsection*{Proof of Theorem~\ref{thm:convergence_estimator_discont}}
        \allowdisplaybreaks
        \begingroup
            Following the same steps as in Theorem~\ref{thm:convergence_estimator}, using H\"older's inequality, and Jensen's inequality, we have
            \begin{equation*}
                \begin{split}
                    &\E \Bigg[ \abs{\varPhi(\hatY_{T}^{f}) R_{T}^{f} - \varPhi(\hatY_{T}^{c}) R_{T}^{c}}^{p} \Bigg] \\
                    &\quad = \E \Bigg[ \Bigg\vert \varPhi(\hatY_{T}^{f}) R_{T}^{f} - \varPhi(\hatY_{T}^{f}) + \varPhi(\hatY_{T}^{f}) - \varPhi(\hatY_{T}^{c}) + \varPhi(\hatY_{T}^{c}) - \varPhi(\hatY_{T}^{c}) R_{T}^{c} \Bigg\vert^{p} \Bigg] \\
                    ~
                    &\quad \leq 3^{p - 1} \Bigg\{ \underbrace{\E \bigg[ \abs{\varPhi(\hatY_{T}^{f}) - \varPhi(\hatY_{T}^{c})}^{p} \bigg]}_{\tildeJ_{1}} + \underbrace{\E \bigg[ \abs{\varPhi(\hatY_{T}^{f})}^{2p} \bigg]^{1/2} \E \bigg[ \abs{1 - R_{T}^{f}}^{2p} \bigg]^{1/2}}_{\tildeJ_{2}} \\
                    &\qquad \qquad \qquad + \underbrace{\E \bigg[ \abs{\varPhi(\hatY_{T}^{c})}^{2p} \bigg]^{1/2} \E \bigg[ \abs{1 - R_{T}^{c}}^{2p} \bigg]^{1/2}}_{\tildeJ_{3}} \Bigg\}.
                \end{split}
            \end{equation*}
            The terms $\tildeJ_{2}$ and $\tildeJ_{3}$ can be bounded in a similar way as in Theorem~\ref{thm:convergence_estimator}. To bound $\tildeJ_{1}$, we begin the analysis by bound the term $\abs{\ind_{ \hatY_{T}^{f} \in G } - \ind_{ \hatY_{T}^{c} \in G }}$. Following a similar methodology as in \cite{giles2022multilevel}, we obtain
            \begin{equation*}
                \begin{split}
                    \abs{\ind_{ \hatY_{T}^{f} \in G } - \ind_{ \hatY_{T}^{c} \in G }} &\leq \ind_{ \norm{\hatY_{T}^{f} - \hatY_{T}^{c}} > d_{\partial G}(\hatY_{T}^{f}) } \\
                    &= \ind_{ \norm{\hatY_{T}^{f} - \hatY_{T}^{c}} > d_{\partial G}(\hatY_{T}^{f}) } \bigg( \ind_{ d_{\partial G}(\hatY_{T}^{f}) \leq \delta } + \ind_{ d_{\partial G}(\hatY_{T}^{f}) > \delta } \bigg) \\
                    &\leq \ind_{ d_{\partial G}(\hatY_{T}^{f}) \leq \delta } + \ind_{ \norm{\hatY_{T}^{f} - \hatY_{T}^{c}} > \delta }.
                \end{split}
            \end{equation*}
            Using H\"older's inequality, taking the expected value on both sides of the inequality, and using Markov's inequality, we obtain
            \begin{equation*}
                \begin{split}
                    \E \bigg[ \abs{\ind_{ \hatY_{T}^{f} \in G } - \ind_{ \hatY_{T}^{c} \in G }}^{p} \bigg] &\leq 2^{p - 1} \bigg( \Prob \Big( d_{\partial G}(\hatY_{T}^{f}) \leq \delta \Big) + \Prob \Big( \norm{\hatY_{T}^{f} - \hatY_{T}^{c}} > \delta \Big) \bigg) \\
                    ~
                    &\leq 2^{p - 1} \bigg( C_{b} \delta + \delta^{-q} \E \Big[ \norm{\hatY_{T}^{f} - \hatY_{T}^{c}}^{q} \Big] \bigg).
                \end{split}
            \end{equation*}
            Using the result from Theorem~\ref{thm:convergence}, for the choice of $\delta = q^{\frac{3q + 2}{2(q + 1)}} h^{\frac{3}{2} \frac{q}{q + 1}}$, $\tildeJ_{1}$ yields the bound
            \begin{equation*}
                \begin{split}
                    \tildeJ_{1} = \E \bigg[ \abs{\varPhi(\hatY_{T}^{f}) - \varPhi(\hatY_{T}^{c})}^{p} \bigg] &= \E \bigg[\abs{\ind_{ \hatY_{T}^{f} \in G } - \ind_{ \hatY_{T}^{c} \in G }}^{p} \bigg] \\
                    ~
                    &\leq C^{p} \bigg( \delta + \delta^{-q} q^{3q/2} h^{3q/2} \bigg) \\
                    ~
                    &\leq C^{p} (1 + q) q^{\frac{1}{2} \frac{q}{q + 1}} h^{\frac{3}{2} \frac{q}{q + 1}}.
                \end{split}
            \end{equation*}
            Note that for all $h \in (0, 1)$, and for all $\xi \geq \frac{3}{2} \frac{1}{q + 1}$, $q \geq 1$, the following inequality holds true
            \[ \tildeJ \leq C^{p} (1 + q) q^{\frac{1}{2} \frac{q}{q + 1}} h^{\frac{3}{2} \frac{q}{q + 1}} \leq C^{p} (1 + q) q^{\frac{1}{2} \frac{q}{q + 1}} h^{\frac{3}{2} - \xi}. \]
            
            Combining the estimates for the terms $\tildeJ_{1}$, $\tildeJ_{2}$, and $\tildeJ_{3}$, for all $T > 1$, $p, q \geq 1$, and for all $\xi \geq \frac{3}{2} \frac{1}{q + 1}$ there exists a constant $\kappa_{8} > 0$, independent of $T$, $p$, and $q$, for all sufficiently small $h > 0$, we obtain
            \begin{equation}
                \E \bigg[ \abs{ \varPhi(\hatY_{T}^{f}) R_{T}^{f} - \varPhi(\hatY_{T}^{c}) R_{T}^{c} }^{p} \bigg] \leq \kappa_{8}^{p} T^{p/2} p^{2p} (1 + q) q^{\frac{1}{2} \frac{q}{q + 1}} h^{\frac{3}{2} - \xi}.
            \end{equation}
        \endgroup

    \subsubsection*{Proof of Theorem~\ref{thm:mlmc_cost}}
        \allowdisplaybreaks
        \begingroup
            The MSE, using H\"older's inequality, can be split into three components based on the source of error
            \begin{equation*}
                \begin{split}
                     \E \Big[ \big( \hatphi - \E[\varPhi(X_{\infty})] \big)^{2} \Big] &= \E \Big[ \big( ( \hatphi - \E[\hatphi] ) + ( \E[\hatphi] - \E[\varPhi(X_{T})] ) + ( \E[\varPhi(X_{T})] - \E[\varPhi(X_{\infty})] ) \big)^{2} \Big] \\
                     ~
                     &\leq \underbrace{\Var(\hatphi)}_{\tildeJ_{1}} + \underbrace{2 \big(\E[\hatphi] - \E[\varPhi(X_{T})]\big)^2}_{\tildeJ_{2}} + \underbrace{2 \big(\E[\varPhi(X_{T})] - \E[\varPhi(X_{\infty})]\big)^{2}}_{\tildeJ_{3}},
                \end{split}
            \end{equation*}
            where $\Var(\hatphi)$ denotes the variance of the MLMC estimator. Recall that we want the MSE to be bounded from above by $\epsilon^{2}$. This can be achieved by splitting the error contribution equally among the three error components, i.e. \[ \tildeJ_{k} \leq \frac{\epsilon^{2}}{3}, \qquad \forall k \in \{ 1, 2, 3 \}. \]
            From equation~\eqref{eq:ergodic_geomtric}, we know that 
            \begin{equation*}
                \begin{split}
                    \abs{\E[\varPhi(X_{T})] - \E[\varPhi(X_{\infty})]} \leq \mu^{\star} \e^{- \lambda^{\star} T}.
                \end{split}
            \end{equation*}
            Then for the choice of
            \begin{equation*}
                T = \ceil{\frac{1}{\lambda^{\star}} \log (\epsilon^{-1}) + \frac{1}{\lambda^{\star}} \log(\sqrt{6} \mu^{\star})},
            \end{equation*}
            we achieve the bound $\tildeJ_{3} \leq \epsilon^{2} / 3$.

            We consider two different cases based on the regularity of the payoff function \\[0.2cm]
            \textbf{Lipschitz continuous payoff function:} 
            Using Jensen's inequality and the results from Theorem~\ref{thm:convergence_estimator}, we have \[ \abs{\E \big[ \varPhi(\hatY_{T}^{f, \ell}) R_{T}^{f, \ell} - \varPhi(\hatY_{T}^{c, \ell}) R_{T}^{c, \ell} \big]} \leq C_{1} \sqrt{T} ( 2^{-\ell} h_{0} )^{\frac{3}{2}}, \qquad \forall \ell \geq 1 \] where $C_{1} > 0$ is a constant. From Theorem~\ref{thm:radon_nikodym}, for the choice of $h_{0} = C_{2} / \sqrt{T \log T }$, we obtain
            \begin{equation}
                \begin{split}
                    \abs{\E[\hatphi] - \E[\varPhi(X_{T})]} &= \abs{ \sum_{\ell = L + 1} ^{\infty} \E \Big[ \varPhi(\hatY_{T}^{f, \ell}) R_{T}^{f, \ell} - \varPhi(\hatY_{T}^{c, \ell}) R_{T}^{c, \ell} \Big] } \\
                    ~
                    &\leq C_{1} C_{2}^{3/2} T^{-1/4} \big( \log T \big)^{-3/4} \sum_{\ell = L + 1}^{\infty} 2^{-\frac{3}{2} \ell}.
                \end{split}
            \end{equation}
            where $C_{2} > 0$ is a constant independent of $T$. Then for the choice of $L$ given by
            \begin{equation*}
                L = \ceil{\frac{2}{3} \log_{2} \Big( \epsilon^{-1} T^{-1/4} \big( \log T \big)^{-3/4} \Big) + \frac{2}{3} \log_{2} (\sqrt{6} \widetilde{C}_{1})}, 
            \end{equation*}
            we achieve the bound $\tildeJ_{2} \leq \epsilon^{2} / 3$, where $\widetilde{C}_{1} \coloneqq C_{1} C_{2}^{3/2} / \big( 2^{3/2} - 1 \big)$, and $L$ is total number of levels of the multilevel estimator.

            Let $\cV_{\ell}$ denote the variance of the MLMC estimator under the change-of-measure on level $\ell$. Using the results from Theorem~\ref{thm:convergence_estimator}, there exists a constant $c_{1} > 0$ such that
            \begin{equation}
                \label{eq:variance_lipschitz}
                \begin{split}
                    \cV_{\ell} \coloneqq \Var \Big( \varPhi(\hatY_{T}^{f, \ell}) R_{T}^{f, \ell} - \varPhi(\hatY_{T}^{c, \ell}) R_{T}^{c, \ell} \Big) \leq c_{1} T (2^{-\ell} h_{0})^{3}, \qquad \forall \ell \ge 1,
                \end{split}
            \end{equation}
            Let $\cC_{\ell}$ denote the computational cost needed to generate numerical realizations of both the fine and the coarse trajectories on each level $\ell$ \[ \cC_{\ell} \leq c_{2} \frac{T}{2^{-\ell} h_{0}}, \] where $c_{2} > 0$ is a constant.

            To bound the first error component $\tildeJ_{1}$, we choose the number of samples on each level of the multilevel estimator in such a way that that the total cost is minimized under the constraint that the total variance of the estimator is $\epsilon^{2} / 3$. From the standard MLMC theory \cite[Theorem 2.1]{giles2015multilevel}, for the choice of $N_{\ell}$ given by
            \begin{equation*}
                N_{\ell} = \ceil{3 \epsilon^{-2} \Bigg( \sum_{\ell = 0}^{L} \sqrt{ \cV_{\ell} \cC_{\ell} } \Bigg) \sqrt{\frac{\cV_{\ell}}{\cC_{\ell}}} },
            \end{equation*}
            we achieve the bound $\tildeJ_{1} \leq \epsilon^{2} / 3$. The total computational cost involved in the implementation of the higher-order change-of-measure scheme is
            \[ \Cost = 3 \epsilon^{-2} \Bigg( \sum_{\ell = 0}^{L} \sqrt{\cV_{\ell} \cC_{\ell}} \Bigg)^{2} + \sum_{\ell = 0}^{L} \cC_{\ell}, \]
            where the total computational cost relates to the total number of operations required to numerically sample the fine and the coarse trajectories, and to compute the Radon--Nikodym derivatives over all levels, for all the Monte Carlo samples.

            Note that $\cV_{0} = \mathcal{O}(1)$ and $\cC_{0} = \cO (T h_{0}^{-1})$. Then for the choice of $h_{0} = C_{2} / \sqrt{T \log T}$, we obtain
            \begin{equation*}
                \begin{split}
                    \Cost &\leq 6 \epsilon^{-2} \Bigg( \cV_{0} \cC_{0} + \bigg( \sum_{\ell = 1}^{L} \sqrt{\cV_{\ell} \cC_{\ell}} \bigg)^{2} \Bigg) + \sum_{\ell = 0}^{L} \cC_{\ell} \\
                    &= \cO \Big( \epsilon^{-2} \abs{\log \epsilon}^{3/2} (\log \abs{\log \epsilon})^{1/2} \Big).
                \end{split}
            \end{equation*}
            
            \textbf{Discontinuous payoff function:}
            From Theorem~\ref{thm:convergence_estimator_discont} and for all $\xi > 0$, we have \[ \abs{\E \big[ \varPhi(\hatY_{T}^{f, \ell}) R_{T}^{f, \ell} - \varPhi(\hatY_{T}^{c, \ell}) R_{T}^{c, \ell} \big]} \leq C_{1} \sqrt{T} ( 2^{-\ell} h_{0} )^{\frac{3}{2} - \xi}, \qquad \forall \ell \geq 1 \] where $C_{1} > 0$ is a constant. From Theorem~\ref{thm:radon_nikodym}, for the choice of $h_{0} = C_{2} T^{-\frac{1}{\frac{3}{2} - \xi}}$, we obtain
            \begin{equation}
                \begin{split}
                    \abs{\E[\hatphi] - \E[\varPhi(X_{T})]} &= \abs{ \sum_{\ell = L + 1} ^{\infty} \E \Big[ \varPhi(\hatY_{T}^{f, \ell}) R_{T}^{f, \ell} - \varPhi(\hatY_{T}^{c, \ell}) R_{T}^{c, \ell} \Big] } \\
                    ~
                    &\leq C_{1} C_{2}^{3/2 - \xi} T^{-1/2} \sum_{\ell = L + 1}^{\infty} 2^{-(\frac{3}{2} - \xi) \ell}.
                \end{split}
            \end{equation}
            Then for the choice of $L$ given by
            \[ L = \ceil{\frac{1}{\frac{3}{2} - \xi} \log_{2} \Big( \epsilon^{-1} T^{-1/2} \Big) + \frac{1}{\frac{3}{2} - \xi} \log_{2}(\sqrt{6} \widetilde{C}_{1})},\]
            we obtain the bound $\tildeJ_{2} \leq \epsilon^{2} / 3$, where $\widetilde{C}_{1} \coloneqq C_{1} C_{2}^{3/2 - \xi} / (2^{3/2 - \xi} - 1)$. 

            From Theorem~\ref{thm:convergence_estimator_discont}, we know that there exists a constant $c_{1} > 0$ for all $\xi > 0$ such that the following holds
            \begin{equation}
                \label{eq:variance_discont}
                \begin{split}
                    \cV_{\ell} \coloneqq \Var \Big( \varPhi(\hatY_{T}^{f, \ell}) R_{T}^{f, \ell} - \varPhi(\hatY_{T}^{c, \ell}) R_{T}^{c, \ell} \Big) \leq c_{1} T (2^{-\ell} h_{0})^{\frac{3}{2} - \xi}, \qquad \forall \ell \ge 1,
                \end{split}
            \end{equation}
            Using a similar argument as for the uniformly Lipschitz continuous payoff function, the expected total computational cost for a discontinuous payoff function is
            \begin{equation*}
                \begin{split}
                    \Cost &\leq 6 \epsilon^{-2} \Bigg( \cV_{0} \cC_{0} + \bigg( \sum_{\ell = 1}^{L} \sqrt{\cV_{\ell} \cC_{\ell}} \bigg)^{2} \Bigg) + \sum_{\ell = 0}^{L} \cC_{\ell} \\
                    &= \cO \Big( \epsilon^{-2} \abs{\log \epsilon}^{5/3 + \xi} \Big)
                \end{split}
            \end{equation*}
            for all $\xi > 0$.
        \endgroup

    \textit{Acknowledgements.} We would like to thank Mike Giles for his suggestions for improving the manuscript. 
    
    \bibliography{./references.bib}
\end{document}